\documentclass[11pt]{amsart}
\usepackage{amssymb}
\usepackage{amsrefs}
\usepackage{amsmath}
\usepackage{mathrsfs}
\usepackage{faktor}
\usepackage{tikz}

\usetikzlibrary{arrows}
\usetikzlibrary{shapes.geometric}
\usetikzlibrary{decorations.pathreplacing,decorations.markings}
\usetikzlibrary{automata,positioning,arrows,calc, quotes}
\usepackage[utf8]{inputenc}
\usepackage[top=1in,bottom=1in,right=1in,left=1in]{geometry}
\usepackage{hyperref}
\usepackage{enumitem}

\usepackage[capitalise]{cleveref}

\newtheorem{theorem}{Theorem}[section]
\newtheorem{lem}[theorem]{Lemma}
\newtheorem{proposition}[theorem]{Proposition}
\newtheorem{cor}[theorem]{Corollary}

\theoremstyle{definition}
\newtheorem{dfn}[theorem]{Definition}
\newtheorem{ex}[theorem]{Example}
\newtheorem{rmk}[theorem]{Remark}

\numberwithin{theorem}{section}

\newenvironment{theorem_no_number}[1][]{\begin{trivlist}
\item[\hskip \labelsep {\bfseries Theorem \def\temp{#1}\ifx\temp\empty  #1\else  #1\fi
.}] \itshape}  {\end{trivlist}}

\DeclareMathOperator{\dom}{dom}

\DeclareMathOperator{\im}{im}

\DeclareMathOperator{\Aut}{Aut}

\DeclareMathOperator{\modns}{mod}

\setlength{\parindent}{0em}
\setlength{\parskip}{1em}

\title{Subsets of groups with context-free preimages}

\author{Alex Levine}

\address{Department of Mathematics, Alan Turing Building, The University
of Manchester, Manchester M13 9PL, UK}

\email{alex.levine@manchester.ac.uk}

\keywords{context-free languages, virtually free groups, recognisably context-free sets}

\subjclass[2020]{03D05, 20F10, 20F65, 68Q45}

\begin{document}

\begin{abstract}
  We study subsets \(E\) of finitely generated groups where the set of all
  words over a given finite generating set that lie in \(E\) forms a
  context-free language. We call these sets \textit{recognisably context-free}.
  They are invariant of the choice of generating set and a theorem of Muller
  and Schupp fully classifies when the set \(\{1\}\) can be recognisably
  context-free. We show that every conjugacy class of a group \(G\) is
  recognisably context-free if and only if \(G\) is virtually free. We also
  show that a coset whose Schreier coset graph is quasi-transitive is
  recognisably context-free if and only if the Schreier coset graph
  is quasi-isometric to a tree.
\end{abstract}

\maketitle

\section{Introduction}
  For each finitely generated group, it is possible to define a wide variety of
  natural formal languages arising from different aspects of the group. One of
  the most widely studied is the \textit{word problem} of a group, which is the
  language of all words over a given finite generating set that represent the
  identity. Anisimov first introduced the word problem and showed that the word
  problem of a group \(G\) is a regular language if and only if \(G\) is finite
  \cite{Anisimov}. The class of groups with context-free word problem was shown
  to be the class of virtually free groups by Muller and Schupp
  \cite{muller_schupp} along with a result of Dunwoody \cite{fp_accessible}.
  Herbst also showed that a group has a one-counter word problem if and only if
  it is virtually cyclic, and Holt, Owens and Thomas showed that a group is
  virtually abelian of rank \(k\) if and only if its word problem is the
  intersection of \(k\) one-counter languages \cite{one_counter_semigroups}.
  Various attempts have also been made to classify groups with word
  problems that are poly-context-free languages \cite{brough}, multiple
  context-free languages \cites{Salvati2015,KrophollerSpriano} and the languages
  of blind \(k\)-counter automata \cite{ElderKambitesOstheimer}.

  A subset \(E\) of a finitely generated group is called \textit{recognisably
  context-free} if the language of all words representing elements of \(E\) is
  context-free. Recognisably context-free sets were introduced by Herbst
  \cite{one_counter_groups}, although Muller and Schupp had already studied them
  in the guise of context-free word problems. Asking if the
  set \(\{1\}\) is recognisably context-free is equivalent to asking if a word
  problem is context-free, and thus the Muller-Schupp Theorem fully classifies
  in which groups \(\{1\}\) is recognisably context-free. Whilst a group must be
  finitely generated in order to define recognisably context-free subsets, the
  choice of generating set does not matter.

  The complement of the word problem, called the \textit{coword problem}
  has also been widely studied, and asking if the coword problem is context-free
  is equivalent to asking if the set \(G \setminus \{1\}\) is recognisably
  context-free in \(G\). Many examples exist of non-virtually
  free groups with a context-free (but not deterministic) coword problem,
  including virtually abelian groups \cite{HoltReesRoverThomas},
  Higman-Thompson groups and Houghton groups \cite{LenhertSchweitzer}.
  There is a conjecture that a finitely generated group has a
  context-free coword problem if and only if it embeds into Thompson's
  group \(V\) \cites{LenhertThesis, BleakMatucciNeunhoffer}.

  Herbst's study of recognisably context-free sets showed that if a group \(G\)
  has the property that a subset \(R \subseteq G\) is rational if and only if
  \(R\) is recognisably context-free, then \(G\) is virtually cyclic
  \cite{one_counter_groups}. Various other lemmas were shown, including that the
  recognisably context-free sets are not affected by changing generating set,
  and are stable under multiplication by rational sets. A corollary to this is
  that in virtually free groups, rational sets are recognisably context-free.
  Herbst also studied the case when a finite set is recognisably (deterministic)
  context-free, showing that admitting a finite recognisably context-free subset,
  admitting a finite deterministic recognisably context-free subset and being
  virtually free are all equivalent \cite{Herbst92}.

  Carvalho studied recognisably context-free subsets, showing that a
  group is virtually free if and only if for all finitely generated subgroups
  \(H\) of \(G\) and all subsets \(K \subseteq H\), \(K\) is recognisably
  context-free in \(G\) if and only if \(K\) is recognisably context-free
  in \(H\) \cite{Carvalho}.

  Ceccherini-Silberstein and Woess studied when subgroups can be recognisably
  context-free (albeit using different nomenclature), and showed that a
  subgroup is recognisably context-free if and only if the corresponding
  Schreier coset graph is a context-free graph \cite{CeccWoess2012}; a condition
  dependant on the structure of the ends of the graph.

  %

  We first consider conjugacy classes. Whilst we do not fully classify all cases
  when a conjugacy class can be recognisably context-free, we do are able to
  classify the class of groups where every conjugacy class is recognisably
  context-free.

  \begin{theorem_no_number}[\ref{thm:conj}]
    Let \(G\) be a finitely generated group. Then every conjugacy class of
    \(G\) is recognisably context-free if and only if \(G\) is virtually free.
  \end{theorem_no_number}

  Our final section considers subgroups and cosets where the corresponding
  Schreier coset graph is quasi-transitive. It is not difficult to use the
  Muller-Schupp Theorem to answer the question for normal subgroups, however
  arbitrary subgroups require more work. Using a version of Stallings' Theorem
  for quasi-transitive graphs \cite{HamannLehnerMiraftabRuhmann}, we show
  the following:

  \begin{theorem_no_number}[\ref{thm:cosets}]
		Let \(G\) be a finitely generated group, \(H \leq G\) and \(g \in G\) be
    such that the Schreier coset graph of \((G, H)\) is quasi-transitive.
		Then \(Hg\) is recognisably context-free if and only if the Schreier coset
		graph of \((G, H)\) is a quasi-tree.
	\end{theorem_no_number}

  Since completing this paper, the author has been made aware of a result of
  Rodaro, released a few months earlier that proves the same result
  \cite{rodaro}, when taken together with the result of Ceccherini-Silberstein
  and Woess \cite{CeccWoess2012} that classifies when a Schreier coset graph is a context-free graph.
  Rodaro's method uses the context-free graphs introduced by Muller and Schupp
  \cite{MullerSchupp85}, whereas our method is proved using a recent
  generalisation of Stallings' Theorem \cite{HamannLehnerMiraftabRuhmann},
  avoiding context-free graphs entirely.
  %

  We begin with the preliminary knowledge required for later sections in
  Section~\ref{prelim_sec}. Section~\ref{sec:basic-properties} gives a
  collection of basic properties of recognisably free subsets. We then discuss conjugacy classes
  in Section~\ref{sec:conjugacy} and conclude with our results on
  subgroups and cosets in Section~\ref{sec:cosets}.

\section{Preliminaries}
	\label{prelim_sec}
  We introduce concepts that will be used later. Please note that functions
  will always be written to the right of their arguments.

  \subsection{Formal languages}
    A \textit{language} over an \textit{alphabet} (a finite set) \(\Sigma\) is a
    subset of the free monoid \(\Sigma^\ast\); the set of finite sequences of
    elements of \(\Sigma\), denoted \(a_1 \cdots a_n\), rather than \((a_1,
    \ldots, a_n)\). \textit{Words} over \(\Sigma\) are elements of
    \(\Sigma^\ast\). We will use \(\varepsilon\) to denote the empty word. Since
    group elements can be represented as words over a finite monoid generating
    set, to avoid confusion between group elements and abstract words, when
    writing the length of a word \(w\) we use \(|w|\); when writing the length
    of a group element \(g\) we write \(\|g\|\). To avoid similar confusion
    between equivalence as words, and as group elements, we write \(u =_G v\) if
    \(u\) and \(v\) are words representing the same element of a group \(G\) and
    \(u \equiv v\) if \(u\) and \(v\) are equivalent as words.

	\subsection{Regular languages}
    We give a very brief introduction to regular languages. We refer the reader
    to \cite[Section 2.5]{groups_langs_aut} or \cite[Chapters
    2-4]{hopcroft_motwani_ullman} for more information.

		\begin{dfn}
			Let \(\Sigma\) be an alphabet (a finite set) and let \(\Gamma\) be a
			\((\Sigma \cup \{\varepsilon\})\)-edge-labelled graph. A word \(w \in
			\Sigma^\ast\) \textit{traces a path} in \(\Gamma\) from a vertex \(u \in
			V(\Gamma)\) to \(v \in V(\Gamma)\) if there is a path \(\gamma\) in
			\(\Gamma\) from \(u\) to \(v\) such that concatenating the labels of the
			edges in \(\gamma\) (in order) yields \(w\).
		\end{dfn}

		\begin{dfn}
			A \textit{finite-state automaton} is a tuple \(\mathcal A = (\Sigma, \
			\Gamma, \ q_0, \ F)\), where
			\begin{enumerate}
				\item \(\Sigma\) is an alphabet;
				\item \(\Gamma\) is a finite edge-labelled directed graph with labels
				from \(\Sigma \cup \{\varepsilon\}\);
				\item \(q_0 \in V(\Gamma)\) is called the \textit{start state};
				\item \(F \subseteq V(\Gamma)\) is called the set of \textit{accept
				states}.
			\end{enumerate}
			We call vertices in \(\Gamma\) \textit{states}.
			A word \(w \in \Sigma^\ast\) is \textit{accepted} by \(\mathcal A\) if
			there is a path in \(\Gamma\) from \(q_0\) to a state in \(F\), where
			\(w\) is the word obtained by concatenating the labels of the edges in the
			path. The \textit{language accepted} by \(\mathcal A\) is the set of all
			words accepted by \(\mathcal A\).
			A language is called \textit{regular} if it accepted by a finite-state
			automaton.
		\end{dfn}

		\begin{ex}
			We will show that the language \(L = \{a^m b c^n \mid m, \ n \in
			\mathbb{Z}_{\geq 0}\}\) is regular over \(\{a, \ b, \ c\}\).
			\begin{figure}
				\begin{tikzpicture}
					[scale=.6, auto=left,every node/.style={circle}]
					\tikzset{
					on each segment/.style={
						decorate,
						decoration={
							show path construction,
							moveto code={},
							lineto code={
								\path [#1]
								(\tikzinputsegmentfirst) -- (\tikzinputsegmentlast);
							},
							curveto code={
								\path [#1] (\tikzinputsegmentfirst)
								.. controls
								(\tikzinputsegmentsupporta) and (\tikzinputsegmentsupportb)
								..
								(\tikzinputsegmentlast);
							},
							closepath code={
								\path [#1]
								(\tikzinputsegmentfirst) -- (\tikzinputsegmentlast);
							},
						},
					},
					mid arrow/.style={postaction={decorate,decoration={
								markings,
								mark=at position .5 with {\arrow[#1]{stealth}}
							}}},
				}

					\node[draw] (q0) at (0, 0) {\(q_0\)};
					\node[draw, double] (q1) at (5, 0) {\(q_1\)};

					\draw[postaction={on each segment={mid arrow}}] (q1) to
					[out=40, in=-40, distance=2cm] (q1);

					\draw[postaction={on each segment={mid arrow}}] (q0) to
					[out=-140, in=140, distance=2cm] (q0);

					\draw[postaction={on each segment={mid arrow}}] (q0) to (q1);

					\node (l1) at (-2, 0) {\(a\)};

					\node (l2) at (2.5, 0.5) {\(b\)};

					\node (l3) at (7, 0) {\(c\)};

				\end{tikzpicture}
				\caption{Finite state automaton for for \(\{a^m b c^n \mid m, \ n \in
				\mathbb{Z}_{\geq 0}\}\), with start state \(q_0\) and accept state
				\(q_0\).}
				\label{reg_ex_fig}
			\end{figure}
			The finite-state automaton defined in Figure \ref{reg_ex_fig} accepts a
			language that is contained in \(L\), as reading any word in the automaton
			results in reading any number of \(a\)s, followed by one \(b\), followed
			by any number of \(c\)s. Moreover, if \(w = a^m b c^n \in L\), then we can
			use this automaton to accept \(w\) by traversing the labelled by \(a\) at
			\(q_0\) \(m\) times, then reading one \(b\) to transfer to \(q_1\), then
			traversing the \(c\) edge \(n\) times, before being accepted. Thus this
			automaton accepts \(L\), and \(L\) is a regular language.
		\end{ex}
%

	\subsection{Context-free languages}
		We define context-free languages. We give a very brief introduction to this
    class, but the reader can find more information in \cite[Section 2.6]{groups_langs_aut} or \cite[Chapters 5-7]{hopcroft_motwani_ullman}.

		\begin{dfn}
			A \textit{context-free grammar} is a tuple \(\mathcal G = (\Sigma,  V,  \mathcal P,
			 \mathbf S)\), where
			\begin{enumerate}
				\item \(\Sigma\) is a finite alphabet;
				\item \(V\) is a finite alphabet, disjoint from \(\Sigma\), called the set
				 of \textit{non-terminals};
        \item \(\mathcal P\) is a finite subset of \(V \times (\Sigma \cup
          V)^\ast\), called the set of \textit{productions}. The production
          \((\mathbf A, \
				\omega)\) is usually denoted \(\mathbf A \to \omega\).
				\item \(\mathbf S \in V\) is called the \textit{start symbol}.
			\end{enumerate}
			An \textit{application} of a production \(\mathbf A \to \omega\) to a
			word \(\nu \in (\Sigma \cup V)^\ast\) that contains \(\mathbf A\) is the
			action that replaces an occurrence of \(\mathbf A\) in \(\omega\). A word
			\(w \in \Sigma^\ast\) is \textit{generated} by \(\mathcal G\), if \(w\)
			can be obtained from \(\mathbf S\), by a finite sequence of applications
			of productions. The \textit{language generated} by \(\mathcal G\),
			denoted \(L(\mathcal G)\), is the set of all words generated by
			\(\mathcal G\). A language that is generated by a context-free grammar is
			called \textit{context-free}.

			A \textit{derivation} in \(\mathcal G\) is a finite sequence of
			applications of productions. We write \(\mathbf A \Rightarrow^\ast
			\omega\), for \(\mathbf A \in V\) and \(\omega \in (\Sigma \cup
			V)^\ast\), if there is a derivation that takes \(\mathbf A\) to
      \(\omega\). A non-terminal \(\mathbf{A}\) in \(\mathcal G\) is called
      \textit{useless} is there is no derivation in \(\mathcal G\)
      taking the start symbol \(\mathbf{S}\) to a word in the terminals
      via a word (in any combination of terminals and non-terminals)
      containing \(\mathbf{A}\).
		\end{dfn}

		\begin{ex}
			\label{ex:wp-Z-CF}
			The language
			\[
				L = \{w \in \{a, a^{-1}\}^\ast \mid w \text{ contains the
				same number of occurrences } a \text{ as } a^{-1}\}
			\]
			is context-free. We give an example of a context-free grammar for \(L\).
			Let \(\mathcal G = (\{a, a^{-1}\}, \{\mathbf{S}\}, \mathcal P, \mathbf S)\)
			be a context-free grammar, where \(\mathcal P\) contains the productions:
			\[
				\mathbf{S} \to \mathbf{S} a \mathbf S a^{-1} \mathbf{S}, \quad
				\mathbf{S} \to \mathbf S a^{-1} \mathbf S a \mathbf S, \quad
				\mathbf{S} \to \varepsilon.
			\]
			We claim that \(\mathcal G\) generates \(L\). Firstly, note that every
			word in \(L\) can be obtained from the empty word \(\varepsilon\) by a
			finite sequence of free expansions; that is, by iteratively inserting a
			subword of the form \(aa^{-1}\) or \(a^{-1}a\). By using the first two
			productions, we can therefore start with \(\mathbf{S}\) and end with every
			word in \(w \in L\) with a number of occurrences of \(\mathbf{S}\) `mixed
			in'. We can use the third production to remove all occurrences of \(\mathbf
			S\), to end up with \(w\). Conversely, any word that \(\mathcal G\)
			generates must be obtainable from \(\varepsilon\) by a finite sequence of
			free expansions, from the construction of \(\mathcal G\), and so
			\(\mathcal G\) only generates words in \(L\). Thus \(\mathcal G\)
      generates \(L\), as required.
		\end{ex}

		The following lemma collects the standard closure properties of context-free
		languages.

    \begin{lem}[{\cite[Propositions 2.6.26, 2.6.32 and 2.6.34]{groups_langs_aut}}]
			\label{lem:cf-closure}
      The class of context-free languages is closed under finite union,
      intersection with a regular languages, concatenation, Kleene star
      closure, image under free monoid homomorphism and preimage under free
      monoid homomorphism.
		\end{lem}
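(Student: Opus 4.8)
The plan is to separate the six closure properties into those that admit a direct grammar construction and those best handled by passing to pushdown automata. For the grammar-native properties---finite union, concatenation, Kleene star closure and image under a homomorphism---we take context-free grammars \(\mathcal{G}_i = (\Sigma, V_i, \mathcal{P}_i, \mathbf{S}_i)\) generating the given languages, arranging by renaming non-terminals that the sets \(V_i\) are pairwise disjoint, and we introduce a fresh start symbol \(\mathbf{S} \notin \bigcup_i V_i\).

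For finite union it suffices to treat two languages: we adjoin the productions \(\mathbf{S} \to \mathbf{S}_1\) and \(\mathbf{S} \to \mathbf{S}_2\) to \(\mathcal{P}_1 \cup \mathcal{P}_2\), and a routine induction on derivation length shows the resulting grammar generates \(L_1 \cup L_2\). Concatenation is identical but with the single production \(\mathbf{S} \to \mathbf{S}_1 \mathbf{S}_2\), and Kleene star closure uses the productions \(\mathbf{S} \to \mathbf{S} \mathbf{S}_1\) and \(\mathbf{S} \to \varepsilon\). For the image \(L\phi\) under a free monoid homomorphism \(\phi \colon \Sigma^\ast \to \Delta^\ast\), we take a grammar for \(L\) and replace every occurrence of each terminal \(a \in \Sigma\) on the right-hand side of a production by the word \(a\phi \in \Delta^\ast\); since \(\phi\) is a monoid homomorphism, the derivations of the new grammar are in bijection with those of the old one and the generated language is exactly \(L\phi\).

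For intersection with a regular language and for preimage under a homomorphism, the cleanest route passes through the standard equivalence between context-free grammars and pushdown automata. Given a pushdown automaton \(\mathcal{A}\) accepting the context-free language and a finite-state automaton \(\mathcal{B}\) accepting the regular language, we form the product automaton whose states are pairs of a state of \(\mathcal{A}\) and a state of \(\mathcal{B}\) and whose stack is that of \(\mathcal{A}\): on reading a letter the two components advance in lockstep, while \(\varepsilon\)-moves of \(\mathcal{A}\) leave the \(\mathcal{B}\)-coordinate fixed, and a pair is accepting when both coordinates are. This product is again a pushdown automaton and accepts the intersection. (Alternatively one may argue entirely within grammars by putting \(\mathcal{G}\) in Chomsky normal form and taking triple non-terminals \([p, \mathbf{A}, q]\) recording that \(\mathbf{A}\) derives a word driving \(\mathcal{B}\) from \(p\) to \(q\).) For the preimage \(L\phi^{-1} \subseteq \Sigma^\ast\), we build from a pushdown automaton for \(L\) a new pushdown automaton over \(\Sigma\) that, on reading a single letter \(a \in \Sigma\), simulates the original automaton on the fixed word \(a\phi\), feeding its letters through one at a time via auxiliary states that store the as-yet-unread suffix of \(a\phi\) in the finite control; since each \(a\phi\) is a fixed finite string there are only finitely many such states.

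The main obstacle is the intersection with regular languages: unlike the other five properties it cannot be obtained by a transparent manipulation of grammars alone, and genuinely requires either the product of machines or the triple-non-terminal encoding. The content lies in verifying that the product faithfully tracks the stack discipline of \(\mathcal{A}\) and the state evolution of \(\mathcal{B}\) simultaneously, so that the accepted language is precisely the intersection; the remaining properties then follow from the direct constructions sketched above.
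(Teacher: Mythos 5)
Your proof is correct and is the standard textbook argument; the paper itself states this lemma as a citation (to Propositions 2.6.26, 2.6.32 and 2.6.34 of the referenced text) and gives no proof, and the constructions you describe --- fresh start symbols for union, concatenation and star, terminal substitution for homomorphic image, the product of a pushdown automaton with a finite-state automaton for intersection with a regular language, and the finite-control simulation of \(a\phi\) for inverse homomorphism --- are exactly the ones underlying those cited results. No gaps.
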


 		It is useful to be able to assume some context-free grammars we use are in
 		Chomsky normal form. We give the definition below.

		\begin{dfn}
			A context-free grammar \((\Sigma, \ V, \ P, \ \mathbf S)\) is in
			\textit{Chomsky normal form} if every production is of the form \(\mathbf A
			\to \mathbf{BC}\) or \(\mathbf A \to \alpha\), where \(\mathbf{A, \ B, \ C}
			\in V\) and \(\alpha \in \Sigma\).
		\end{dfn}

		\begin{lem}[{\cite[Theorem 2.6.14]{groups_langs_aut}}]
			\label{Chomsky_norm_form_lem}
			Every context-free language is accepted by a context-free grammar in
			Chomsky normal form with no useless non-terminals..
		\end{lem}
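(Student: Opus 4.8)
The plan is to start with an arbitrary context-free grammar \(\mathcal G = (\Sigma, V, \mathcal P, \mathbf S)\) with \(L(\mathcal G) = L\) and apply a finite sequence of classical transformations, each producing a new context-free grammar generating the same language (up to the empty word), terminating in one in Chomsky normal form with no useless non-terminals. None of the individual transformations is deep; the work lies in verifying that each one preserves the generated language and that the final restructuring introduces only useful non-terminals. The transformations are, in order: elimination of \(\varepsilon\)-productions, elimination of unit productions, removal of useless non-terminals, and conversion of the surviving productions into binary form.

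First I would eliminate \(\varepsilon\)-productions. Compute the set \(N \subseteq V\) of \emph{nullable} non-terminals, those \(\mathbf A\) with \(\mathbf A \Rightarrow^\ast \varepsilon\), by a fixed-point iteration in which \(\mathbf A\) is declared nullable as soon as it has a production whose right-hand side consists entirely of nullable symbols. Then replace each production \(\mathbf A \to \omega\) by all productions \(\mathbf A \to \omega'\) obtained by deleting some subset of the occurrences of nullable symbols in \(\omega\), discarding the case \(\omega' \equiv \varepsilon\); the empty word is dealt with separately by adding \(\mathbf S \to \varepsilon\) (and a fresh start symbol that never recurs on a right-hand side) precisely when \(\varepsilon \in L\). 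Next eliminate unit productions \(\mathbf A \to \mathbf B\): compute the unit pairs \((\mathbf A, \mathbf B)\) with \(\mathbf A \Rightarrow^\ast \mathbf B\) through unit productions alone, and for each such pair and each non-unit production \(\mathbf B \to \omega\) add \(\mathbf A \to \omega\), then delete every unit production. After these two passes each production has the form \(\mathbf A \to \alpha\) with \(\alpha \in \Sigma\) or \(\mathbf A \to \omega\) with \(|\omega| \geq 2\).

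I would then remove useless non-terminals in two passes: first delete every non-generating symbol (one deriving no terminal word, identified by a fixed-point iteration) together with all productions mentioning it, and only afterwards delete the unreachable symbols, those appearing in no word with \(\mathbf S \Rightarrow^\ast \omega\). Carrying out the passes in this order is exactly what guarantees no useless symbol survives. Finally, restructure the remaining long productions. For each production of length at least two, replace each occurrence of a terminal \(\alpha\) by a fresh non-terminal \(\mathbf C_\alpha\) with the production \(\mathbf C_\alpha \to \alpha\), so that every long right-hand side is a string of non-terminals, and then break each \(\mathbf A \to \mathbf B_1 \cdots \mathbf B_n\) with \(n \geq 3\) into \(\mathbf A \to \mathbf B_1 \mathbf D_1,\ \mathbf D_1 \to \mathbf B_2 \mathbf D_2,\ \ldots\) using fresh non-terminals \(\mathbf D_i\). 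Every fresh non-terminal is reachable from \(\mathbf A\) and generates a terminal word, hence is useful, so the result is in Chomsky normal form with no useless non-terminals.

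The main obstacle I anticipate is not any single construction but the cumulative bookkeeping that each transformation preserves \(L\) exactly, and in particular the interaction between \(\varepsilon\)-elimination and the empty-word case: one must check that deleting nullable symbols from right-hand sides recovers precisely those derivations that previously passed through \(\varepsilon\), and that the special treatment of \(\mathbf S \to \varepsilon\) records whether \(\varepsilon \in L\) without ever letting \(\mathbf S\) reappear. A second subtle point is the ordering of the two useless-symbol passes: removing unreachable symbols before non-generating ones can leave behind symbols that have become useless, whereas removing non-generating symbols first and unreachable symbols second is self-correcting.
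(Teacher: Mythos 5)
The paper does not prove this lemma at all---it cites it as Theorem 2.6.14 of the referenced textbook---and your proposal is a correct account of the standard construction used in such sources: \(\varepsilon\)-elimination, unit-production elimination, removal of non-generating and then unreachable symbols (in that order, as you rightly stress), and binarisation with fresh useful non-terminals. The only caveat is that when \(\varepsilon \in L\) your grammar retains \(\mathbf S \to \varepsilon\), which strictly falls outside the paper's stated definition of Chomsky normal form; this is a standard convention issue inherited from the textbook statement rather than a gap in your argument.
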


    We will also need the fact the context-free languages are closed
    under substitutions of context-free languages.

    \begin{dfn}
      Let \(L\) and \(M\) be languages over an alphabet \(\Sigma\) and let \(a
      \in \Sigma\). The \textit{substitution} of \(a\) in \(L\) for \(M\)
      is the language of all words obtained from a word in \(L\) by replacing
      each occurrence of \(a\) with a word in \(M\). That is,
      \[
        \{u_0 v_1 u_1 v_2 \cdots v_n u_n \mid u_0 a u_1 a \cdots a u_n \in L,
          u_0, \ldots, u_n \in (\Sigma \setminus \{a\})^\ast, v_1, \ldots,
        v_n \in M\}.
      \]
    \end{dfn}

    \begin{lem}[{\cite[Theorem 7.23]{hopcroft_motwani_ullman}}]
      \label{lem:cf-subs}
      Let \(L\) and \(M\) be context-free languages over an alphabet \(\Sigma\)
      and let \(a \in \Sigma\). Then the substitution of \(a\) in \(L\) for
      \(M\) is context-free.
    \end{lem}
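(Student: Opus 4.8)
The plan is to prove this by directly manipulating context-free grammars for \(L\) and \(M\), following the standard ``grammar splicing'' construction. Fix a context-free grammar \(\mathcal{G}_L = (\Sigma, V_L, P_L, \mathbf{S}_L)\) generating \(L\) and a context-free grammar \(\mathcal{G}_M = (\Sigma, V_M, P_M, \mathbf{S}_M)\) generating \(M\); after renaming non-terminals if necessary, I would assume \(V_L \cap V_M = \emptyset\). I would then build a new grammar \(\mathcal{G}\) with non-terminal set \(V_L \cup V_M\), start symbol \(\mathbf{S}_L\), and a production set obtained as follows: take every production of \(P_L\) and replace each occurrence of the terminal \(a\) on its right-hand side by the non-terminal \(\mathbf{S}_M\), then adjoin all the productions of \(P_M\) unchanged. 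The idea is that the modified \(P_L\)-productions reconstruct a word of \(L\) but with each \(a\) turned into a placeholder \(\mathbf{S}_M\), and each such placeholder then independently derives an arbitrary word of \(M\).

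First I would verify that every word in the substitution is generated by \(\mathcal{G}\). Given a word \(u_0 v_1 u_1 \cdots v_n u_n\) of the substitution, there is by definition a word \(u_0 a u_1 \cdots a u_n \in L\) with the \(u_i\) free of \(a\) and each \(v_i \in M\). Taking a derivation (or parse tree) of \(u_0 a u_1 \cdots a u_n\) in \(\mathcal{G}_L\) and applying the same productions in their modified form yields a derivation \(\mathbf{S}_L \Rightarrow^\ast u_0 \mathbf{S}_M u_1 \cdots \mathbf{S}_M u_n\) in \(\mathcal{G}\), since every \(a\) produced by \(\mathcal{G}_L\) is exactly an \(a\) that the modification turned into \(\mathbf{S}_M\). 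Grafting a derivation \(\mathbf{S}_M \Rightarrow^\ast v_i\) from \(\mathcal{G}_M\) onto each placeholder (valid because the \(P_M\)-productions are present in \(\mathcal{G}\)) completes a derivation of \(u_0 v_1 u_1 \cdots v_n u_n\). Note that words \(v_i\) containing \(a\) cause no trouble, since the \(P_M\)-productions emit the terminal \(a\) rather than \(\mathbf{S}_M\), so no further substitution occurs.

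For the converse I would show every word generated by \(\mathcal{G}\) lies in the substitution. The key observation is that, because \(V_L\) and \(V_M\) are disjoint, a modified \(P_L\)-production only rewrites a symbol of \(V_L\) and never introduces a symbol of \(V_M\) other than \(\mathbf{S}_M\), while a \(P_M\)-production only rewrites a symbol of \(V_M\) and never introduces a symbol of \(V_L\). Hence any derivation in \(\mathcal{G}\) can be reordered so that all applications of modified \(P_L\)-productions precede all applications of \(P_M\)-productions. After the \(P_L\)-phase the sentential form is a word \(w\) over \((\Sigma \setminus \{a\}) \cup \{\mathbf{S}_M\}\), containing no terminal \(a\); replacing each \(\mathbf{S}_M\) back by \(a\) reverses the modification and so gives a word of \(L\), whence \(w = u_0 \mathbf{S}_M u_1 \cdots \mathbf{S}_M u_n\) with \(u_0 a u_1 \cdots a u_n \in L\). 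The subsequent \(P_M\)-phase derives from each \(\mathbf{S}_M\) a word of \(M\), exhibiting the generated word as a member of the substitution.

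The step I expect to require the most care is the reordering argument in the converse direction: one must be sure that interleaved derivations really can be untangled into an \(L\)-phase followed by an \(M\)-phase, and in particular that no symbol of \(V_M\) produced before all \(V_L\)-symbols are consumed can interfere. This is where the disjointness of the non-terminal alphabets and the fact that \(\mathbf{S}_M\) is the unique point of contact between the two sub-grammars do the work; making the induction on derivation length (or the surgery on parse trees) precise is the only non-routine part.
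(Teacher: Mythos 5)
The paper offers no proof of this lemma; it is quoted directly from Hopcroft--Motwani--Ullman (Theorem 7.23). Your argument is correct and is essentially the standard grammar-splicing construction used in that reference --- replace each terminal \(a\) on the right-hand sides of the \(L\)-grammar by the start symbol of the \(M\)-grammar and adjoin the \(M\)-productions --- with the converse direction properly justified via the disjointness of the non-terminal alphabets, so it matches the intended proof.
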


		\subsection{Pushdown automata}
      An alternative definition for the class of context-free languages is the
      class of languages accepted by a pushdown automaton. We give the
      definition below. We refer the reader to \cite[Chapter
      6]{hopcroft_motwani_ullman} for a more detailed introduction.

      We informally describe a pushdown automaton before we give the definition.
      The idea is much the same as a finite-state automaton, with the exception
      that there is some memory - in the form of a finite word called the stack.
      When transitioning from one state to another, instead of just looking at
      what state one is currently in along with the letter (or word) being
      read, the top of the stack is also considered. When transitioning, one can
      remove (`pop') a (possibly empty) word from the top of the stack; that is
      remove a suffix. If the correct suffix does not exist in the stack, then
      the transition in question cannot be used. After popping a word, a new
      (again, possibly empty) word can be added (`pushed') to the end of the
      stack. All transitions take this form, and again the set of words that
      trace a path in the set of states, starting with the stack empty (we
      take empty stacks to contain precisely one letter, the bottom of stack
      symbol) and ending in an accept state (with any stack).

			There are multiple (equivalent) definitions of pushdown automata. Some
			have bottom of stack symbols, whilst others do not. Most standard definitions
			only allow one letter (or \(\varepsilon\)) to be read at a time. We allow
			any word to be read at a time, and thus this is what some authors call a
			\textit{generalised pushdown automaton}.

			\begin{dfn}
				A \textit{pushdown automaton} is a \(7\)-tuple \(\mathcal A = (Q, \Sigma, \chi,
				\perp, \delta, q_0, F)\), where
				\begin{enumerate}
					\item \(Q\) is a finite set, called the set of \textit{states};
					\item \(\Sigma\) is a (finite) alphabet;
					\item \(\chi\) is a finite alphabet, disjoint from \(\Sigma\), called
					the \textit{stack alphabet};
					\item \(\perp \in \chi\) is called the \textit{bottom of stack
					symbol};
					\item \(\delta \subseteq (Q \times \Sigma^\ast \times \chi^\ast)
					\times (Q \times \chi^\ast)\) is a finite set called the
					\textit{transition relation}. We must have that pairs in \(\delta\)
					can only have at most one occurrence of \(\perp\) in each tuple in the
					pair, and if it occurs in one pair, it must occur in the other. This
					is to ensure that the bottom of stack symbol always tells us when the
					stack (the `memory') is empty, and can never be removed. Transitions
					can be thought of as (not well-defined) functions, from \(Q \times
					\Sigma^\ast \times \chi^\ast\) to \(Q \times \chi^\ast\); they are not
					(necessarily) well-defined as each point in the `domain' can have
					multiple `images'.
					\item \(q_0 \in Q\) is called the start state;
					\item \(F \subseteq Q\) is called the set of \textit{accept states}.
				\end{enumerate}
				We say that \(\mathcal Q\) is \textit{deterministic} if for all
				stack words \(\nu \in \chi^\ast\), all states \(q \in Q\) and all \(w
				\in \Sigma^\ast\), there is a unique transition (or sequence of
				transitions) from \(q\) reading \(w\) for this given stack word \(\nu\).
				The language \textit{accepted} by \(\mathcal A\) is the language of all
				words \(w\) over \(\Sigma^\ast\) such that there is a finite sequence of
				transitions taking \((q_0, \perp)\) to \((q_f, \nu)\) whilst reading
				\(w\), such that \(q_f \in F\) and \(\nu \in \chi^\ast\) is any stack.
				We write \(Q(\mathcal A)\) and \(\chi(\mathcal A)\) for the set of
				states and stack alphabet, respectively of a pushdown automaton
				\(\mathcal A\).
			\end{dfn}

			\begin{ex}
				\label{ex:pda}
				We saw in Example~\ref{ex:wp-Z-CF} that the language
				\[
					L = \{w \in \{a, a^{-1}\}^\ast \mid w \text{ contains the
					same number of occurrences } a \text{ as } a^{-1}\}
				\]
				is context-free. We now define a pushdown automaton that accepts
				\(L\). This idea of the pushdown automaton is to use the stack to
				track the freely reduced form of the word read so far, and then only
				accept when the stack is empty. We formally define the automaton, but
				Figure~\ref{fig:pda-ex} contains a graphical representation. Our
				set of states will be \(\{q_0, q_1\}\), where \(q_0\) is the start
				state and \(q_1\) is the (unique) accept state. The stack alphabet
				will be \(\chi = \{\perp, x, x^{-1}\}\), with \(\perp\) the bottom of
				stack symbol. We then have six transitions from \(q_0\) to \(q_0\) and
				one transition from \(q_0\) to \(q_1\):
				\begin{enumerate}
					\item \((q_0, a, \perp) \to (q_0, x \perp)\);
					\item \((q_0, a, x) \to (q_0, xx)\);
					\item \((q_0, a, x^{-1}) \to (q_0, \varepsilon)\);
					\item \((q_0, a^{-1}, \perp) \to (q_0, x^{-1} \perp)\);
					\item \((q_0, a^{-1}, x) \to (q_0, \varepsilon)\);
					\item \((q_0, a^{-1}, x^{-1}) \to (q_0, x^{-1} x^{-1})\);
					\item \((q_0, \varepsilon, \perp) \to (q_1, \perp)\);
				\end{enumerate}
				The first six transitions simply track the freely reduced form of the
				word read so far (except using \(x\) rather than \(a\)) and the last
				transition confirms that the stack is empty; that is, that the word
				indeed equals the identity of the group \(\langle a \mid \rangle\) (that
				is, it lies in \(L\)) before moving to the accept state. If we move to
				\(q_1\) before finishing reading our word, we can never accept, as there
				are no transitions out of \(q_1\) that allow the rest of the word to be
				read.
				\begin{figure}
					\begin{tikzpicture}
						[scale=1, auto=left,every node/.style={circle}]
						\tikzset{
						on each segment/.style={
							decorate,
							decoration={
								show path construction,
								moveto code={},
								lineto code={
									\path [#1]
									(\tikzinputsegmentfirst) -- (\tikzinputsegmentlast);
								},
								curveto code={
									\path [#1] (\tikzinputsegmentfirst)
									.. controls
									(\tikzinputsegmentsupporta) and (\tikzinputsegmentsupportb)
									..
									(\tikzinputsegmentlast);
								},
								closepath code={
									\path [#1]
									(\tikzinputsegmentfirst) -- (\tikzinputsegmentlast);
								},
							},
						},
						mid arrow/.style={postaction={decorate,decoration={
									markings,
									mark=at position .5 with {\arrow[#1]{stealth}}
								}}},
					}

						\node[draw] (q0) at (0, 0) {\(q_0\)};
						\node[draw, double] (q1) at (5, 0) {\(q_1\)};

						\draw[postaction={on each segment={mid arrow}}] (q0) to
						[out=15, in=45, distance=2cm] node[midway, above right]{\((a, \perp) / x \perp\)} (q0);
						\draw[postaction={on each segment={mid arrow}}] (q0) to
						[out=75, in=105, distance=2cm] node[midway, above]{\((a, x) / xx\)} (q0);
						\draw[postaction={on each segment={mid arrow}}] (q0) to
						[out=135, in=165, distance=2cm] node[midway, above left]{\((a, x^{-1}) / \varepsilon\)} (q0);
						\draw[postaction={on each segment={mid arrow}}] (q0) to
						[out=195, in=225, distance=2cm] node[midway, below left]{\((a^{-1}, \perp) / x^{-1} \perp\)} (q0);
						\draw[postaction={on each segment={mid arrow}}] (q0) to
						[out=255, in=295, distance=2cm] node[midway, below]{\((a^{-1}, x) / \varepsilon\)} (q0);
						\draw[postaction={on each segment={mid arrow}}] (q0) to
						[out=315, in=345, distance=2cm] node[midway, below right]{\((a^{-1}, x^{-1}) / x^{-1} x^{-1}\)} (q0);

						\draw[postaction={on each segment={mid arrow}}] (q0) to
						node[midway, below right]{\((\varepsilon, \perp) / \perp\)} (q1);

					\end{tikzpicture}
					\caption{Pushdown automaton defined in Example~\ref{ex:pda} that accepts \(L = \{w \in \{a, a^{-1}\}^\ast
					\mid w \text{ contains the same number of occurrences } a \text{ as }
					a^{-1}\}\). The start state is \(q_0\) and the accept state is
					\(q_1\). Each transition from a state to a state is written in the
					form \((b, \alpha) / \beta\), where \(b\) is the (terminal) letter
					read, \(\alpha\) is the stack word popped from the top of the stack
					and \(\beta\) is the stack word pushed to the top of the stack.}
					\label{fig:pda-ex}
				\end{figure}
			\end{ex}

      \begin{lem}[{\cite[Theorem 2.6.10]{groups_langs_aut}}]
				A language is context-free if and only if it is accepted by a pushdown
				automaton.
			\end{lem}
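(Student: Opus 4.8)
The plan is to prove the two implications separately, giving an explicit translation in each direction.

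For the forward direction—that every context-free language is accepted by a pushdown automaton—I would start from a grammar \(\mathcal G = (\Sigma, V, \mathcal P, \mathbf S)\) and, invoking \cref{Chomsky_norm_form_lem}, assume \(\mathcal G\) is in Chomsky normal form. I would then build a pushdown automaton with states \(\{q_0, q_1, q_2\}\), stack alphabet \(\chi = V \cup \{\perp\}\), start state \(q_0\) and single accept state \(q_2\). The transitions are: \((q_0, \varepsilon, \perp) \to (q_1, \mathbf S \perp)\) to load the start symbol; for each production \(\mathbf A \to \mathbf{BC}\), a transition \((q_1, \varepsilon, \mathbf A) \to (q_1, \mathbf{BC})\) expanding the topmost non-terminal; for each production \(\mathbf A \to \alpha\), a transition \((q_1, \alpha, \mathbf A) \to (q_1, \varepsilon)\) consuming one input letter; and finally \((q_1, \varepsilon, \perp) \to (q_2, \perp)\) to accept once the stack is drained. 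The idea is that this automaton simulates leftmost derivations: after reading a prefix \(u\) of the input, the stack (read from the top) holds exactly the still-to-be-processed tail \(\gamma \in V^\ast\) of the current leftmost sentential form \(u\gamma\). A short induction on the number of moves shows that the automaton accepts \(w\) precisely when \(\mathbf S \Rightarrow^\ast w\), so it accepts \(L(\mathcal G)\).

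The reverse direction is the substantial one. Given a pushdown automaton \(\mathcal A = (Q, \Sigma, \chi, \perp, \delta, q_0, F)\), I would first put it into a normal form at the cost of extra states: since \(\mathcal A\) is a generalised automaton, each transition may read a word, pop a word and push a word, so I would factor every transition through fresh intermediate states into atomic steps that read at most one letter, pop exactly one stack symbol and push a bounded-length stack word; I would also convert acceptance by final state into acceptance by empty stack, using a new bottom symbol and a draining state. I would then apply the triple construction, taking non-terminals \([p, X, q]\) for \(p, q \in Q\) and \(X \in \chi\), together with a start symbol \(\mathbf S\). The intended meaning is that \([p, X, q] \Rightarrow^\ast w\) exactly when \(\mathcal A\) can read \(w\) while passing from state \(p\) to state \(q\), starting with \(X\) on top of the stack and finishing having removed \(X\) and everything placed above it, without ever disturbing the stack below \(X\). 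The productions are \(\mathbf S \to [q_0, \perp, q]\) for each state \(q\), and, for every atomic transition \((p, a, X) \to (r, Y_1 \cdots Y_k)\), the productions \([p, X, q_k] \to a[r, Y_1, q_1][q_1, Y_2, q_2] \cdots [q_{k-1}, Y_k, q_k]\) ranging over all tuples of states \(q_1, \ldots, q_k\), with the case \(k = 0\) giving \([p, X, r] \to a\).

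The heart of the argument, and the step I expect to be the main obstacle, is proving that the non-terminals \([p, X, q]\) carry exactly the intended meaning. This is a two-sided claim: that every computation of \(\mathcal A\) of the described form yields a derivation of the read word in the grammar, and conversely that every such derivation corresponds to a genuine computation. Each direction is an induction—on the number of moves of \(\mathcal A\) in one direction, and on the length of the derivation in the other—and the delicate point is the bookkeeping that matches a single pop-and-push move against the factorisation of the accepted word across the newly introduced intermediate states and across the stack symbols \(Y_1, \ldots, Y_k\) in turn. Once this equivalence is established, the start productions \(\mathbf S \to [q_0, \perp, q]\) collect precisely the words driving \(\mathcal A\) to an empty stack, so \(L(\mathcal A) = L(\mathcal G)\), completing the proof.
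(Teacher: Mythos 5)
The paper offers no proof of this lemma: it is quoted directly from the cited textbook, so there is no internal argument to compare yours against. Your proposal is the standard two-part proof — simulating leftmost derivations of a Chomsky-normal-form grammar on the stack for one direction, and normalising the (generalised) automaton into atomic empty-stack form followed by the triple construction \([p, X, q]\) for the other — which is essentially the argument given in the reference, and the outline is correct. Two small points to tidy up if you were to write it in full: Chomsky normal form as defined in this paper admits no \(\varepsilon\)-productions, so the case \(\varepsilon \in L\) needs the usual special handling; and you must fix a consistent top-of-stack convention (the paper pops suffixes) so that in the transition for \(\mathbf A \to \mathbf{BC}\) it is \(\mathbf B\) that ends up on top and is expanded first.
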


			\begin{dfn}
				A language is called \textit{deterministic context-free} if it is
				accepted by a deterministic pushdown automaton.
			\end{dfn}

      \begin{lem}[{\cite[Propositions 2.6.30 and 2.6.34]{groups_langs_aut}}]
				\label{lem:det-cf-closure}
				The class of deterministic context-free languages is closed under
        complement and preimage under free monoid homomorphism.
			\end{lem}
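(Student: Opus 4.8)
The plan is to work throughout with the deterministic pushdown automaton (DPDA) characterisation of deterministic context-free languages, and in each case to start from a DPDA accepting the given language and to construct a new DPDA for the desired language. The only genuine subtleties are bookkeeping ones arising from the two features that separate DPDAs from deterministic finite-state automata: a DPDA may fail to read its entire input (because no applicable transition exists), and it may perform a run of \(\varepsilon\)-transitions between consuming two consecutive input letters.

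For closure under complement, let \(\mathcal A\) be a DPDA accepting \(L \subseteq \Sigma^\ast\). Naively one would swap accepting and non-accepting states, as for a deterministic finite-state automaton, but this fails for two reasons. First, \(\mathcal A\) may get stuck on some inputs before reaching the end of the word, so I would first put \(\mathcal A\) into a normal form that always consumes its whole input, by adding a dead (sink) state and routing every missing transition to it, and by detecting and eliminating infinite \(\varepsilon\)-loops: after sufficiently many \(\varepsilon\)-moves with no input consumed, either a configuration repeats or the stack grows without bound with a repeating control-and-top-of-stack pattern, and in either case no further input letter will ever be read, so the eventual acceptance status of that tail can be precomputed and the loop redirected to the dead state. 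Second, within a maximal block of \(\varepsilon\)-moves made after reading a letter, \(\mathcal A\) may pass through both accepting and non-accepting states, so membership must be decided by the configuration reached just before the next letter is read. I would resolve this by enlarging the state set with a flag recording whether an accepting state has been visited since the last input letter was consumed; the complement automaton then accepts exactly those words for which no accepting state was visited in the final block, which is a correct and deterministic acceptance condition.

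For closure under preimage, let \(\phi \colon \Sigma^\ast \to \Delta^\ast\) be a monoid homomorphism and let \(\mathcal A\) be a DPDA over \(\Delta\) accepting \(L\). I would build a DPDA \(\mathcal B\) over \(\Sigma\) that, on reading a single letter \(a \in \Sigma\), simulates the behaviour of \(\mathcal A\) on the word \(\phi(a)\). Since \(\Sigma\) is finite, the images \(\phi(a)\) have bounded length, so \(\mathcal B\) can hold the unprocessed suffix of the current image in a buffer inside its finite control and feed it to the simulated copy of \(\mathcal A\) one symbol at a time, interleaving any \(\varepsilon\)-moves of \(\mathcal A\); when the buffer empties, \(\mathcal B\) reads the next input letter and reloads. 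The stack of \(\mathcal B\) is simply the stack of \(\mathcal A\). Determinism is preserved because at every step the next action of \(\mathcal B\) is dictated by the (deterministic) next action of \(\mathcal A\), and \(\mathcal B\) accepts precisely when the simulated \(\mathcal A\) accepts after the whole image \(\phi(w)\) has been consumed, giving \(L(\mathcal B) = \phi^{-1}(L)\).

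The main obstacle is the complement case, and specifically the normalisation step that forces the DPDA to consume all of its input: eliminating the \(\varepsilon\)-loops cleanly while keeping the automaton deterministic, and simultaneously tracking accepting states across \(\varepsilon\)-blocks, is where all the care is needed. The preimage construction, by contrast, is a routine buffered simulation once one checks that the bounded buffer keeps the control finite.
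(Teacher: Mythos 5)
The paper does not prove this lemma itself; it quotes it from the literature (Propositions 2.6.30 and 2.6.34 of the cited text), and the proofs there are precisely the two classical constructions you outline. Your sketch is correct and matches that standard approach: the normalisation-plus-flag argument for complementation (handling stuck configurations, infinite \(\varepsilon\)-loops, and acceptance within a block of \(\varepsilon\)-moves) and the buffered simulation for preimage under a free monoid homomorphism are exactly the textbook arguments, and you have correctly identified where the real care is needed.
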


			We will need the following lemma when classifying recognisably context-free
			cosets.

			\begin{lem}
				\label{lem:pda-empty-stack}
				Let \(L\) be a context-free language. Then \(L\) is accepted by a
				pushdown automaton \(\mathcal A\), such that whenever an accept state
				in \(\mathcal A\) is reached, the stack is always empty.
			\end{lem}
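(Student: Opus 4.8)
The plan is to start from an arbitrary pushdown automaton accepting \(L\) and bolt onto it a `cleanup' phase that empties the stack before any accept state can be entered. Since \(L\) is context-free, the equivalence between context-free languages and pushdown automata gives a pushdown automaton \(\mathcal B = (Q, \Sigma, \chi, \perp, \delta, q_0, F)\) with \(L(\mathcal B) = L\). An accepting run of \(\mathcal B\) may terminate with an arbitrary stack, so the whole idea is to postpone acceptance until the stack has been reduced to just \(\perp\), which is precisely what `empty' means in our conventions.

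Concretely, I would build \(\mathcal A\) from \(\mathcal B\) by adjoining two fresh states: a cleanup state \(q_c\) and a single new accept state \(q_f\). I set the accept set of \(\mathcal A\) to be \(\{q_f\}\), so that every state of \(F\) is demoted to being non-accepting. The transition relation of \(\mathcal A\) consists of all transitions of \(\delta\), together with the following new transitions, each of which reads the empty word: from every \(p \in F\) a transition \((p, \varepsilon, \varepsilon) \to (q_c, \varepsilon)\); for every \(\gamma \in \chi \setminus \{\perp\}\) a transition \((q_c, \varepsilon, \gamma) \to (q_c, \varepsilon)\); and a single transition \((q_c, \varepsilon, \perp) \to (q_f, \perp)\). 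The transitions of the second kind pop one non-bottom symbol at a time, so iterating them reduces any stack to \(\perp\); the last transition can fire only once the stack is exactly \(\perp\), and it returns \(\perp\) to the stack, so it respects the requirement that \(\perp\) is never removed. No transition leaves \(q_f\).

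It then remains to verify two things. First, \(q_f\) is the only accept state and the only transition entering it is \((q_c, \varepsilon, \perp) \to (q_f, \perp)\), which can fire only when the stack is exactly \(\perp\) and leaves it exactly \(\perp\); since nothing leaves \(q_f\), whenever an accept state of \(\mathcal A\) is reached the stack is empty, as required. Second, \(L(\mathcal A) = L\). For \(L \subseteq L(\mathcal A)\), given \(w \in L\) take an accepting run of \(\mathcal B\) on \(w\) ending in some \(p \in F\); because the new transitions read no input, we may append the move into \(q_c\), then a sequence of pop moves emptying the stack, then the move into \(q_f\), yielding an accepting run of \(\mathcal A\) on \(w\). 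For the reverse inclusion, any accepting run of \(\mathcal A\) must reach \(q_f\), hence must pass through the move into \(q_c\) from some \(p \in F\), and up to that point it is a run of \(\mathcal B\) reading all of \(w\) and ending in \(F\); as all later moves read \(\varepsilon\), the input read is exactly \(w\), so \(w \in L(\mathcal B) = L\).

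I expect no serious obstacle here: the essential content is just the standard interconvertibility of acceptance by final state and acceptance by empty stack, specialised to produce a single machine that accepts by both simultaneously. The only points needing care are bookkeeping ones, namely demoting the states of \(F\) to non-accepting (otherwise they could be reached with a non-empty stack) and handling the bottom-of-stack symbol so that the cleanup phase pops everything above \(\perp\) while never deleting \(\perp\) itself.
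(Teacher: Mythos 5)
Your construction is exactly the one in the paper: adjoin a cleanup state and a single new accept state, demote the old accept states, pop non-bottom symbols one at a time with $\varepsilon$-moves, and enter the accept state only via a transition that pops and re-pushes $\perp$. The argument is correct and matches the paper's proof, with your verification of $L(\mathcal A)=L$ merely spelled out in slightly more detail.
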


			\begin{proof}
				As \(L\) is context-free, there is a pushdown automaton \(\mathcal A\)
				accepting \(L\), with set of accept states \(F\), stack alphabet
				\(\chi\) and bottom of stack symbol \(\perp\). We modify \(\mathcal A\)
				to obtain a new pushdown automaton \(\mathcal B\) as follows. We start
				by adding two new states \(q_1\) and \(q_2\) to \(\mathcal A\), and
				redefine the set of accept states to be \(\{q_2\}\). We then add an
				\(\varepsilon\)-transition from each \(q \in F\) to \(q_1\) that does
				not alter the stack. For each \(\alpha \in \chi \setminus \{\perp\}\) we
				add an \(\varepsilon\)-transition from \(q_1\) to \(q_1\) that pops
				\(\alpha\) from the stack. We then add a \(\varepsilon\)-transition from
				\(q_1\) to \(q_2\) that pops \(\perp\) from the stack and then pushes
				\(\perp\) back onto the stack. By construction, \(\mathcal B\) can only
				accept when the stack is empty and \(\mathcal B\) must accept the same
				language as \(\mathcal A\).
			\end{proof}

		\subsection{Recognisable and rational sets}

			Before we formally define recognisably context-free sets, we first cover
			their regular analogues: recognisable sets. We give the definition below.

			\begin{dfn}
				Let \(G\) be a group with a finite generating set \(\Sigma\) and let
				\(\pi \colon \Sigma^\ast \to G\) be the natural homomorphism. A subset
				\(E \subseteq G\) is called \textit{recognisable} with respect to
				\(\Sigma\) if \(E \pi^{-1}\) is a regular language.
			\end{dfn}

			Using the same argument as the proof of Lemma~\ref{gen_set_lem}, we have
			that changing finite generating sets in a finitely does not affect whether
			a given subset is recognisable.
			Herbst and Thomas completely characterised recognisable subsets of groups
			in the following result.

			\begin{proposition}[{\cite[Proposition 6.3]{HerbstThomas}}]
				\label{prop:rec-classification}
				A subset \(E\) of a finitely generated group \(G\) is recognisable
				if and only if \(E\) is a finite union of cosets of some finite-index
				subgroup of \(G\).
			\end{proposition}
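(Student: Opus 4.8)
The plan is to prove both implications, using the standard bridge between regular languages and finite-index (normal) subgroups. Throughout write $L = E\pi^{-1} \subseteq \Sigma^\ast$, and record at the outset the one observation that drives everything: $L$ is saturated by $\ker \pi$, meaning $u\pi = v\pi$ implies $u \in L \iff u\pi \in E \iff v\pi \in E \iff v \in L$. This is exactly what ties the combinatorics of $L$ to the algebra of $G$, and I would invoke it in both directions.

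For the reverse implication, suppose $E = \bigcup_{i=1}^k g_i K$ with $[G:K] < \infty$. I would first replace $K$ by its normal core $N = \bigcap_{g \in G} g^{-1} K g$, which is normal of finite index and contained in $K$; then every coset of $K$ is a finite union of cosets of $N$, so $E$ becomes a finite union of cosets of the finite-index \emph{normal} subgroup $N$. Let $q \colon G \to Q := G/N$ be the quotient onto the finite group $Q$ and set $\psi = \pi q \colon \Sigma^\ast \to Q$, a monoid homomorphism. Since $E$ is a union of $N$-cosets, $E = (Eq)q^{-1}$, and hence $L = (Eq)\psi^{-1}$. I would then exhibit $L$ as regular directly: the finite-state automaton with state set $Q$, start state $1_Q$, accept states $Eq$, and the transition $r \mapsto r\,(a\psi)$ on reading $a \in \Sigma$ sends a word $w$ from $1_Q$ to $w\psi$, so it accepts exactly $(Eq)\psi^{-1} = L$. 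Thus $E$ is recognisable.

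For the forward implication, suppose $L$ is regular and let $\equiv_L$ be the Myhill--Nerode equivalence on $\Sigma^\ast$, where $u \equiv_L v$ iff $uw \in L \iff vw \in L$ for all $w$; this has finitely many classes and is a right congruence. Because $L$ is $\ker\pi$-saturated, one checks $u\pi = v\pi \implies u \equiv_L v$, so $\equiv_L$ descends along $\pi$ to a well-defined relation $\approx$ on $G$ with finitely many classes, inheriting the right-congruence property $g \approx g' \implies gh \approx g'h$. I would then show that $N := \{g \in G : g \approx 1\}$ is a subgroup: it contains $1$; it is inverse-closed, since $g \approx 1$ right-multiplied by $g^{-1}$ gives $1 = gg^{-1} \approx g^{-1}$; and it is product-closed, since $g, g' \approx 1$ gives $gg' \approx g' \approx 1$. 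Right-multiplication invariance then identifies the $\approx$-classes as precisely the right cosets $Ng$, so $N$ has finite index. Finally, taking $w = \varepsilon$ in the definition of $\equiv_L$ gives, for $g = u\pi \approx v\pi = g'$, that $g \in E \iff u \in L \iff v \in L \iff g' \in E$, so each $\approx$-class lies in $E$ or is disjoint from it. Hence $E$ is a union of right cosets of the finite-index subgroup $N$, and as there are only finitely many such cosets in total, it is a finite union.

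The main obstacle is the forward direction, and specifically the descent step: the crux is verifying that the purely language-theoretic Myhill--Nerode equivalence descends to a genuine \emph{right congruence on the group} whose identity class is a subgroup, since this is exactly where ``finitely many automaton states'' is converted into ``finite-index subgroup.'' The remaining points — closure of $N$ under inverses and products, that the classes are the cosets $Ng$, and that $E$ is saturated by $\approx$ — are then short group-theoretic checks.
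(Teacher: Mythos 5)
Your proof is correct. Note that the paper itself gives no proof of this statement: it is imported verbatim as \cite[Proposition 6.3]{HerbstThomas}, so there is no in-paper argument to compare against. What you have written is a complete, self-contained proof along the classical lines (essentially the Anisimov--Seifert/Herbst--Thomas argument): the easy direction passes to the normal core and reads off a finite automaton on the quotient group, and the hard direction pushes the Myhill--Nerode right congruence down along $\pi$ using saturation of $L = E\pi^{-1}$ by $\ker\pi$, identifies the class of the identity as a finite-index subgroup $N$, and observes that $E$ is saturated by the resulting partition into right cosets of $N$. All the individual verifications (well-definedness of $\approx$, closure of $N$ under inverses and products via right-translation, the classes being exactly the cosets $Ng$, and $E$ being a union of classes via $w=\varepsilon$) are carried out correctly, and the left-versus-right coset issue is immaterial since one can always pass to the normal core. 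The one point worth making explicit if you wrote this up formally is that surjectivity of $\pi$ (i.e.\ that $\Sigma$ is a monoid generating set) is what guarantees $\approx$ partitions all of $G$ into finitely many classes; you use it implicitly and it holds in the paper's setting.
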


			A dual notion to recognisable sets is the concept of rational sets.
			Rather than having a regular full preimage, these are the image of a
			regular language under the natural map \(\pi\), or equivalently.

			\begin{dfn}
				Let \(G\) be a group with a finite generating set \(\Sigma\), and let
				\(\pi \colon \Sigma^\ast \to G\) be the natural map. A subset
				\(E \subseteq G\) is called \textit{rational} of \(E = L \pi\) for some
				regular language \(L \subseteq \Sigma^\ast\).
			\end{dfn}

			As with recognisable sets, a similar argument to that in the proof of
			Lemma~\ref{gen_set_lem} shows that the class of rational sets is
			invariant under changing finite generating set.

		\subsection{Recognisably context-free sets}
			The earliest reference we can find to recognisably context-free sets is in
			a paper of Herbst \cite{one_counter_groups}. While they are not given a
			name, the set of all recognisably context-free subsets of a group is denoted
			\(\text{CF}(G)\). Carvalho uses the term \textit{context-free} instead of
			recognisably context-free \cite{Carvalho}; we avoid this to maintain a
			clear distinction between recognisably context-free subsets of groups and
			context-free languages. We instead use the term recognisably context-free
			from \cite{CiobanuEvettsLevine}. We begin with the definition.

      \begin{dfn}
        Let \(G\) be a finitely generated group, \(\Sigma\) be a finite monoid
        generating set, and \(\pi \colon \Sigma^\ast \to G\) be the natural
        homomorphism. A subset \(E \subseteq G\) is called \textit{recognisably
        (deterministic) context-free} with respect to \(\Sigma\) if the full
        preimage \(E \pi^{-1}\) is (deterministic) context-free.
      \end{dfn}

			The following lemma is well-known (see, for example \cite[Lemma
			2.1]{one_counter_groups}). We include a short proof for completeness.

			\begin{lem}
				\label{gen_set_lem}
				Let \(G\) be a finitely generated group. If \(E \subseteq G\) is
				recognisably (deterministic) context-free with respect to one finite
				monoid generating set of \(G\), then \(E\) is recognisably
				(deterministic) context-free with respect to all finite monoid
				generating sets of \(G\).
			\end{lem}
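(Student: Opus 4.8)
The plan is to prove the statement by exhibiting, for any two finite monoid generating sets $\Sigma$ and $\Delta$ of $G$, a free monoid homomorphism relating their preimages, and then invoking the closure of (deterministic) context-free languages under preimage under free monoid homomorphism (Lemmas~\ref{lem:cf-closure} and \ref{lem:det-cf-closure}). Let $\pi_\Sigma \colon \Sigma^\ast \to G$ and $\pi_\Delta \colon \Delta^\ast \to G$ be the two natural homomorphisms. Suppose $E \subseteq G$ is recognisably (deterministic) context-free with respect to $\Sigma$, so that $E\pi_\Sigma^{-1}$ is (deterministic) context-free; the goal is to deduce that $E\pi_\Delta^{-1}$ is (deterministic) context-free as well.

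First I would define a homomorphism $\phi \colon \Delta^\ast \to \Sigma^\ast$ that sends each letter $\delta \in \Delta$ to a fixed word $w_\delta \in \Sigma^\ast$ representing the same group element, i.e.\ $w_\delta =_G \delta$. Such a word $w_\delta$ exists because $\Sigma$ generates $G$ as a monoid. Since $\phi$ is determined on letters and extended multiplicatively to all of $\Delta^\ast$, it is a genuine free monoid homomorphism. The key compatibility property is that $\phi$ commutes with evaluation into $G$: for every $v \in \Delta^\ast$ we have $v\pi_\Delta = (v\phi)\pi_\Sigma$, which follows by induction on word length from the fact that it holds on each generator $\delta \in \Delta$ by construction. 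Consequently $v \in E\pi_\Delta^{-1}$ if and only if $v\phi \in E\pi_\Sigma^{-1}$, so that $E\pi_\Delta^{-1} = (E\pi_\Sigma^{-1})\phi^{-1}$.

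Having identified $E\pi_\Delta^{-1}$ as the preimage of the (deterministic) context-free language $E\pi_\Sigma^{-1}$ under the free monoid homomorphism $\phi$, the conclusion is immediate: in the context-free case, Lemma~\ref{lem:cf-closure} states that context-free languages are closed under preimage under free monoid homomorphism, and in the deterministic context-free case, Lemma~\ref{lem:det-cf-closure} gives the same closure property. Either way, $E\pi_\Delta^{-1}$ is (deterministic) context-free, so $E$ is recognisably (deterministic) context-free with respect to $\Delta$. By symmetry (or since $\Sigma$ and $\Delta$ were arbitrary), the same argument interchanging the two generating sets shows the equivalence in both directions.

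The argument is essentially routine, and I expect no serious obstacle; the only point requiring a modicum of care is verifying that $\phi$ is a \emph{free} monoid homomorphism rather than merely a map respecting the group structure—this is why we must fix a single representative word $w_\delta$ for each letter $\delta$ and extend it blindly to concatenations, deliberately ignoring any group-theoretic relations. The substantive content of the lemma is thus entirely carried by the closure properties already recorded in Lemmas~\ref{lem:cf-closure} and \ref{lem:det-cf-closure}, and the proof amounts to checking that the natural change-of-generators map is of the right form to apply them.
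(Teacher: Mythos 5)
Your proposal is correct and follows essentially the same route as the paper: both define the free monoid homomorphism $\phi \colon \Delta^\ast \to \Sigma^\ast$ by sending each generator to a fixed $\Sigma$-word representing the same group element, observe that $E\pi_\Delta^{-1} = (E\pi_\Sigma^{-1})\phi^{-1}$, and conclude via closure of (deterministic) context-free languages under preimages of free monoid homomorphisms (Lemmas~\ref{lem:cf-closure} and \ref{lem:det-cf-closure}). No gaps.
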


			\begin{proof}
				Let \(\Sigma\) and \(\Delta\) be finite monoid generating sets for
				\(G\), and \(\pi_\Sigma \colon \Sigma^\ast \to G\) and \(\pi_\Delta
				\colon  \Delta^\ast \to G\) be the natural homomorphisms. Suppose that
				\(E\) is (deterministic) recognisably context-free with respect to
				\(\Sigma\). For all \(a \in \Delta\) there exists \(\omega_a \in
				\Sigma^\ast\), such that \(\omega_a =_G a\). Define the free monoid
				homomorphism:
				\begin{align*}
					\phi \colon \Delta^\ast & \to \Sigma^\ast \\
					a & \mapsto \omega_a.
				\end{align*}
        Then \(E \pi_\Delta^{-1} = (E \pi_\Sigma^{-1}) \phi^{-1}\), which is
        context-free as the class of (deterministic) context-free languages is
        closed under preimages of free monoid homomorphisms
        (Lemma~\ref{lem:cf-closure} and Lemma~\ref{lem:det-cf-closure}).
			\end{proof}

      As Lemma~\ref{gen_set_lem} shows, whether or not a subset of a group
      is recognisably context-free is not affected by the choice of
      generating set. Thus we can say that a subset of a group is
      recognisably context-free, omitting the generating set. As the above
      proof only relies on the fact that the class of languages is closed
      under preimages of free monoid homomorphisms, it holds for recognisable
      (regular) sets as well, so we will also omit the generating set when
      referring to such sets.

		\subsection{Quasi-isometries, trees and triangulations}
			Quasi-isometries between metric spaces are a central notion to geometric
			group theory. In the later sections we will show certain groups are
			virtually free is by showing that their Cayley graphs are quasi-isometric
			to trees. We give a brief definition along with a characterisation of
			graphs that are quasi-isometric to trees. We refer the reader to
			\cite[Section 11]{Meier_groups_graphs_trees} for an in-depth introduction
			to quasi-isometries from the viewpoint of geometric group theory.

	      \begin{dfn}
	        Let \(X\) and \(Y\) be metric spaces. A function \(f \colon X \to Y\) is
	        a \textit{quasi-isometry} if there exist constants \(\lambda \geq 1\) and
	        \(\mu \geq 0\), such that:
	        \begin{enumerate}
	  	\item For all \(x_1, \ x_2 \in X\), \(\frac{1}{\lambda}d(x_1, \ x_2) -
	  		\mu \leq d((x_1)f, \ (x_2)f) \leq \lambda d(x_1, \ x_2) +
	  		\mu\);
	  	\item For all \(y \in Y\) there exists \(x \in X\), such that
	  		\(d((x)f, \ y) \leq \mu\).
	        \end{enumerate}
	        If a quasi-isometry from \(X\) to \(Y\) exists, we say \(X\) and \(Y\) are
	        \textit{quasi-isometric}.
	      \end{dfn}

	      \begin{rmk}
          The property of being quasi-isometric is symmetric, reflexive and
          transitive.
	      \end{rmk}

				A particular class of graphs we will be using frequently is the class of
				graphs that are quasi-isometric to trees.

				\begin{dfn}
					A graph is called a \textit{quasi-tree} if it is quasi-isometric to a
					tree.
				\end{dfn}

        As most of the graphs we deal with will be locally finite, we define
        this concept as well.

        \begin{dfn}
          A graph is called \textit{locally finite} if the degree of every
          vertex is finite.
        \end{dfn}

	      We now define a triangulation. The definition of a triangulation is the
	      one used in \cite{antolin_Cayley_VF}. Triangulations are a key part in
	      the proof of the result of Muller and Schupp. Showing that there exists
	      \(m \in \mathbb{Z}_{> 0}\) such that every circuit in a given graph is
	      \(m\)-triangulable is sufficient to show that this graph is a
	      quasi-tree. This is often easier than explicitly constructing a
	      quasi-isometry.

	      \begin{dfn}
	        Let \(\Gamma\) be a graph. Let \(m, \ n \in \mathbb{Z}_{> 0}\). An
	        \textit{\(m\)-sequence} of length \(n\) in \(\Gamma\) is a sequence
	        \((v_0, \ \ldots, \ v_n)\) of elements of \(V(\Gamma)\), such that \(v_0
	        = v_n\), and \(d_\Gamma(v_i, \ v_{i + 1}) \leq m\) for all \(i\). An
	        \(m\)-sequence is called \textit{\(m\)-reducible}, if there exists \(i
	        \in \{1, \ \ldots, \ n - 1\}\), such that \(d_\Gamma(v_{i - 1}, \ v_{i +
	        1}) \leq m\). In such a case, an \textit{\(m\)-reduction} of this
	        \(m\)-sequence at \(i\), is the operation that outputs the \(m\)-sequence
	        \((v_0, \ \ldots, \ v_{i - 1}, \ v_{i + 1}, \ \ldots, \ v_n)\).

	        An \textit{\(m\)-triangulation} of an \(m\)-sequence is a finite sequence of
	        \(m\)-reductions that results in an \(m\)-sequence of length at most \(4\)
	        (with \(3\) distinct points, as the first and last are equal), called the
	        \textit{core} of the triangulation. An \(m\)-sequence that admits an
	        \(m\)-triangulation is called \(m\)-triangulable. Note that as
	        \(1\)-sequences are \(m\)-sequences, we can say that a \(1\)-sequence is
	        \(m\)-triangulable.
	    \end{dfn}

	    \begin{rmk}
	      The \(m\)-reductions in an \(m\)-triangulations can be depicted by drawing
	      a line. This lets us depict the entire triangulation as a number of lines
	      added to our circuit (see \cref{triangulation_fig}).
	    \end{rmk}

	        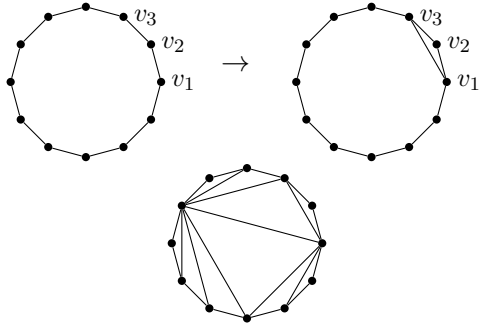
\begin{figure}
	      \begin{tikzpicture}
	        [scale=1, auto=left,every node/.style={circle, fill=black, scale=0.3}]

	        \node[label={right:{\scalebox{3}{\(v_1\)}}}] (v1) at (0:1) {};
	        \node[label={right:{\scalebox{3}{\(v_2\)}}}] (v2) at (30:1) {};
	        \node[label={right:{\scalebox{3}{\(v_3\)}}}] (v3) at (60:1) {};
	        \node (v4) at (90:1) {};
	        \node (v5) at (120:1) {};
	        \node (v6) at (150:1) {};
	        \node (v7) at (180:1) {};
	        \node (v8) at (210:1) {};
	        \node (v9) at (240:1) {};
	        \node (v10) at (270:1) {};
	        \node (v11) at (300:1) {};
	        \node (v12) at (330:1) {};

	        \draw (v1) to (v2);
	        \draw (v2) to (v3);
	        \draw (v3) to (v4);
	        \draw (v4) to (v5);
	        \draw (v5) to (v6);
	        \draw (v6) to (v7);
	        \draw (v7) to (v8);
	        \draw (v8) to (v9);
	        \draw (v9) to (v10);
	        \draw (v10) to (v11);
	        \draw (v11) to (v12);
	        \draw (v12) to (v1);
	      \end{tikzpicture}
	       \begin{tikzpicture}
	        \node (0) at (0, 0) {};
	        \node (1) at (0, 1.15) {\(\to\)};
	        \node (2) at (0.5, 0) {};
	      \end{tikzpicture}
	      \begin{tikzpicture}
	        [scale=1, auto=left,every node/.style={circle, fill=black, scale=0.3}]

	        \node[label={right:{\scalebox{3}{\(v_1\)}}}] (v1) at (0:1) {};
	        \node[label={right:{\scalebox{3}{\(v_2\)}}}] (v2) at (30:1) {};
	        \node[label={right:{\scalebox{3}{\(v_3\)}}}] (v3) at (60:1) {};
	        \node (v4) at (90:1) {};
	        \node (v5) at (120:1) {};
	        \node (v6) at (150:1) {};
	        \node (v7) at (180:1) {};
	        \node (v8) at (210:1) {};
	        \node (v9) at (240:1) {};
	        \node (v10) at (270:1) {};
	        \node (v11) at (300:1) {};
	        \node (v12) at (330:1) {};

	        \draw (v1) to (v2);
	        \draw (v2) to (v3);
	        \draw (v3) to (v4);
	        \draw (v4) to (v5);
	        \draw (v5) to (v6);
	        \draw (v6) to (v7);
	        \draw (v7) to (v8);
	        \draw (v8) to (v9);
	        \draw (v9) to (v10);
	        \draw (v10) to (v11);
	        \draw (v11) to (v12);
	        \draw (v12) to (v1);

	        \draw (v1) to (v3);
	      \end{tikzpicture}
	      \\
	       \begin{tikzpicture}
	        [scale=1, auto=left,every node/.style={circle, fill=black, scale=0.3}]

	        \node (v1) at (0:1) {};
	        \node (v2) at (30:1) {};
	        \node (v3) at (60:1) {};
	        \node (v4) at (90:1) {};
	        \node (v5) at (120:1) {};
	        \node (v6) at (150:1) {};
	        \node (v7) at (180:1) {};
	        \node (v8) at (210:1) {};
	        \node (v9) at (240:1) {};
	        \node (v10) at (270:1) {};
	        \node (v11) at (300:1) {};
	        \node (v12) at (330:1) {};

	        \draw (v1) to (v2);
	        \draw (v2) to (v3);
	        \draw (v3) to (v4);
	        \draw (v4) to (v5);
	        \draw (v5) to (v6);
	        \draw (v6) to (v7);
	        \draw (v7) to (v8);
	        \draw (v8) to (v9);
	        \draw (v9) to (v10);
	        \draw (v10) to (v11);
	        \draw (v11) to (v12);
	        \draw (v12) to (v1);

	        \draw (v1) to (v3);
	        \draw (v1) to (v11);
	        \draw (v4) to (v6);
	        \draw (v6) to (v8);
	        \draw (v6) to (v9);
	        \draw (v6) to (v10);
	        \draw (v3) to (v6);
	        \draw (v10) to (v1);
	        \draw (v1) to (v6);

	      \end{tikzpicture}
	        \caption{Triangulations}
	        \label{triangulation_fig}
	    \end{figure}

	    Part of the proof of the Muller-Schupp Theorem involves showing that
	    the Cayley graphs in groups with a context-free word problem are
	    \(m\)-triangulable for some \(m \in \mathbb{Z}_{> 0}\). It is well-known
	    that this property is equivalent to being quasi-isometric to a tree:

	    \begin{theorem}[{\cite[Theorem 4.7]{antolin_Cayley_VF}}]
				\label{thm:qt-triangulation}
	      A graph is \(m\)-triangulable for some \(m \in \mathbb{Z}_{> 0}\) if
	      and only if it is a quasi-tree.
	    \end{theorem}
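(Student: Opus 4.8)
The plan is to prove both implications, treating the graph $\Gamma$ as a connected geodesic (length) space under its path metric $d_\Gamma$.

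For the direction that a quasi-tree is $m$-triangulable, fix a quasi-isometry $f \colon \Gamma \to T$ onto a tree $T$ with constants $\lambda \geq 1$ and $\mu \geq 0$. First I would show that every closed sequence in a tree triangulates: if $(u_0, \ldots, u_n)$ is an $m'$-sequence in $T$, choosing an index $i$ at which $u_i$ is a leaf of the convex hull of $\{u_0, \ldots, u_n\}$ forces the geodesics $[u_i, u_{i-1}]$ and $[u_i, u_{i+1}]$ to share an initial segment, whence $d_T(u_{i-1}, u_{i+1}) \leq \max\{d_T(u_{i-1}, u_i), d_T(u_i, u_{i+1})\} \leq m'$; so an $m'$-reduction is always available and, by induction, no chord ever exceeds $m'$, so the sequence reduces to a core. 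Now given a $1$-sequence $(v_0, \ldots, v_n)$ in $\Gamma$, its image $((v_0)f, \ldots, (v_n)f)$ is an $m'$-sequence in $T$ with $m' = \lambda + \mu$, since consecutive vertices lie at $d_\Gamma$-distance $1$. I would triangulate this image sequence in $T$ and perform the \emph{same} index deletions on the original sequence in $\Gamma$: whenever a tree-reduction is certified by $d_T(u_{i-1}, u_{i+1}) \leq m'$, the lower quasi-isometry bound gives $d_\Gamma(v_{i-1}, v_{i+1}) \leq \lambda(m' + \mu)$. Thus the original circuit is $m$-triangulable with $m = \lceil \lambda(\lambda + 2\mu) \rceil$, a bound independent of the circuit.

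For the converse, I would verify Manning's bottleneck criterion, which states that a geodesic graph is quasi-isometric to a tree precisely when there is a constant $\Delta$ such that, for all vertices $x, y$, the midpoint $p$ of a geodesic $[x,y]$ lies within $\Delta$ of every path from $x$ to $y$. Given $x$, $y$, a geodesic $[x,y]$, and an arbitrary path $P$ from $x$ to $y$, I would form the circuit $C$ that runs along $[x,y]$ and returns along $P$. By hypothesis $C$ is $m$-triangulable, so its $m$-triangulation decomposes the bounded polygon into triangles all of whose sides (original edges and chords) have length at most $m$, and these triangles, glued along shared chords, carry the structure of a tree. The goal is to extract from this structure a vertex of $P$ within bounded distance of $p$.

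The main obstacle is exactly this last step: bounding $d_\Gamma(p, P)$ by a function of $m$ \emph{alone}, independent of $|C|$. A naive chord-following argument from $p$ across the polygon can drift along the geodesic arc for many steps, so the accumulated chord length is not obviously controlled. The resolution exploits that $[x,y]$ is a geodesic: any triangulation chord with both endpoints $v_a, v_b$ on the geodesic arc satisfies $|a - b| = d_\Gamma(v_a, v_b) \leq m$, so it skips at most $m$ indices; in particular a geodesic--geodesic chord that separates $p$ from both $x$ and $y$ must have both endpoints within $m$ of $p$. Hence any chord that cuts $p$ off deeply from the endpoints is forced to have an endpoint on the arc $P$, and a separation argument in the dual tree of the triangulation converts this into a chord chain from $p$ to $P$ whose length is controlled by $m$. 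Feeding the resulting $\Delta = \Delta(m)$ into Manning's criterion shows $\Gamma$ is a quasi-tree, completing the equivalence.
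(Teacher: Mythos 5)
This statement is not proved in the paper at all: it is quoted verbatim from Antolín \cite{antolin_Cayley_VF}*{Theorem 4.7}, so there is no internal proof to compare against and your proposal must stand on its own. Your forward direction follows the standard route (triangulate the image sequence in the tree, mirror the deletions in \(\Gamma\), control the new chords by the lower quasi-isometry bound), and the pull-back with \(m = \lceil \lambda(\lambda+2\mu)\rceil\) is fine, but the lemma you use to triangulate inside the tree is false as stated. Taking \(u_i\) to be a leaf of the convex hull does \emph{not} give \(d_T(u_{i-1},u_{i+1}) \le \max\{d_T(u_{i-1},u_i),\, d_T(u_i,u_{i+1})\}\): in a tripod with legs of lengths \(3,3,1\) to leaves \(A,B,D\), the vertex \(D\) is a leaf of the convex hull of \(\{A,D,B\}\), yet \(d(A,B)=6\) while \(d(A,D)=d(D,B)=4\). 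Sharing an initial segment of length \(s\) only yields \(d(u_{i-1},u_{i+1}) = d(u_{i-1},u_i)+d(u_i,u_{i+1})-2s\), and being a leaf does not force \(2s\) to exceed the smaller of the two sides. The repair is to reduce at an interior index \(i\) maximising \(d_T(u_0,u_i)\): if \(p\) and \(q\) denote the medians of \((u_0,u_{i-1},u_i)\) and \((u_0,u_{i+1},u_i)\), maximality gives \(d(p,u_{i-1})\le d(p,u_i)\) and \(d(q,u_{i+1})\le d(q,u_i)\), and a two-case comparison of \(d(p,u_i)\) with \(d(q,u_i)\) yields \(d_T(u_{i-1},u_{i+1})\le m'\), so the reduced sequence is again an \(m'\)-sequence and induction finishes the tree case.

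For the converse you have identified the right tool (Manning's bottleneck criterion, which you should cite explicitly as a black box) and, commendably, the right obstacle, but the paragraph ends by describing what a proof would do rather than doing it; as written this direction is a plan, not a proof. The missing step can be closed as follows, and it is exactly where the hypothesis that \([x,y]\) is a geodesic enters. The chords of an \(m\)-triangulation of the circuit \(C\) are pairwise non-crossing (each reduction deletes a single vertex), so the triangles form a tree under adjacency along shared chords. Walk in this dual tree from the triangle containing the midpoint edge \(\{g_{\lfloor k/2\rfloor}, g_{\lfloor k/2\rfloor+1}\}\) of the geodesic \(x=g_0,\dots,g_k=y\) to the triangle containing any edge of \(P\); every chord crossed separates the former edge from the latter inside the polygon. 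A separating chord with both endpoints \(g_a,g_b\) (\(a<b\)) on the geodesic satisfies \(b-a=d_\Gamma(g_a,g_b)\le m\) and \(a\le \lfloor k/2\rfloor<b\), so both endpoints lie within \(m\) of the midpoint \(p\). The walk must eventually cross a chord with an endpoint on \(P\), since the final triangle has two vertices on \(P\); the first such chord shares a triangle with a geodesic--geodesic separating chord (or with the midpoint edge itself), so its \(P\)-endpoint is within \(m\) of a vertex that is within \(m\) of \(p\). This gives \(d_\Gamma(p,P)\le 2m+1\), a bottleneck constant depending only on \(m\), and Manning's criterion then applies. Without this (or an equivalent) argument written out, the converse direction has a genuine gap.
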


	    After this, Stalling's Theorem, together with Dunwoody's accessibility result
	    \cite{fp_accessible} can be used to show that groups whose Cayley graphs are
	    quasi-isometric to trees are virtually free.

		\subsection{Cayley graphs and Schreier coset graphs}
			We briefly recall the definitions of Cayley graphs and Schreier coset
			graphs.
			\begin{dfn}
				Let \(G\) be a group with a finite inverse closed generating set
				\(\Sigma\). The \textit{(right) Cayley graph} of \(G\) with respect to
				\(\Sigma\) is the directed \(\Sigma\)-edge-labelled graph whose vertices
				are the elements of \(G\), and with an edge labelled \(a\) from \(g\) to
				\(ga\) for all \(g \in G\) and \(a \in \Sigma\).
			\end{dfn}

			\begin{theorem}[{\cite[Theorem 4.7]{antolin_Cayley_VF}}]
				\label{thm:cayley-qi-tree}
				A finitely generated group is virtually free if and only if it has a
        Cayley graph that is a quasi-tree.
			\end{theorem}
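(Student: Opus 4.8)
The plan is to prove both implications, treating the two directions quite differently: the forward direction is a short quasi-isometry argument, while the reverse direction carries essentially all of the geometric content.

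For the direction that a virtually free group \(G\) has a Cayley graph that is a quasi-tree, I would first pass to a free subgroup \(F \leq G\) of finite index. By the Nielsen--Schreier theorem \(F\) is free, and since it has finite index in a finitely generated group it is itself finitely generated, hence free of finite rank. The Cayley graph of \(F\) with respect to a free basis is a regular tree, so \(F\) is quasi-isometric to a tree. A finite-index subgroup, equipped with a word metric, is quasi-isometric to the ambient group, so \(G\) is quasi-isometric to \(F\) and hence, by transitivity of quasi-isometry, to a tree. Finally, any two Cayley graphs of \(G\) with respect to finite generating sets are quasi-isometric, so in fact \emph{every} Cayley graph of \(G\) is a quasi-tree.

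For the converse, suppose a Cayley graph \(\Gamma\) of \(G\) is a quasi-tree. If \(G\) is finite there is nothing to prove, so assume \(G\) is infinite. The key structural input is Stallings' theorem on ends together with Dunwoody's accessibility result. First I would observe that, being quasi-isometric to a tree, \(G\) cannot have exactly one end: the number of ends is a quasi-isometry invariant, and a tree is separated into unboundedly many unbounded pieces by bounded sets, whereas a one-ended group is coarsely thick at infinity. Thus \(G\) has two or infinitely many ends. If \(G\) is two-ended it is virtually \(\mathbb{Z}\), and we are done. If \(G\) has infinitely many ends, Stallings' theorem provides a nontrivial splitting of \(G\) as an amalgamated product or HNN extension over a finite subgroup. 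Since \(G\) is quasi-isometric to a tree it is word-hyperbolic, hence finitely presented, so Dunwoody's accessibility theorem applies and the iterated process of splitting over finite subgroups terminates. This exhibits \(G\) as the fundamental group of a finite graph of groups with finite edge groups and vertex groups having at most one end.

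It remains to show that the vertex groups are finite. Because all edge groups in the splitting are finite, each vertex group is quasi-isometrically embedded in \(G\), and a quasi-isometrically embedded geodesic subspace of a quasi-tree is again a quasi-tree; hence each vertex group is quasi-isometric to a tree and, by the argument above, cannot be one-ended. As the vertex groups have at most one end, they must be finite, and a group that is the fundamental group of a finite graph of finite groups is virtually free. I expect the main obstacle to be the reverse direction, and within it the accessibility step: ensuring that the iterated Stallings splitting terminates and that the resulting vertex groups are forced to be finite rather than one-ended. Alternatively, one could run the whole argument through \cref{thm:qt-triangulation}, using \(m\)-triangulability to control the combinatorics of the splitting, but the ends-based argument above is the cleaner route.
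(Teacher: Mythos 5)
The paper does not actually prove this statement: it is imported wholesale from \cite{antolin_Cayley_VF} (the same Theorem~4.7 cited for \cref{thm:qt-triangulation}), and the only indication of a proof in the text is the remark that Stallings' Theorem together with Dunwoody's accessibility result \cite{fp_accessible} handles the hard direction. Your proposal is precisely that standard route: finite-index free subgroup plus quasi-isometry invariance for the forward implication, and hyperbolicity, Stallings, Dunwoody, undistorted vertex groups, and the Karrass--Pietrowski--Solitar characterisation for the converse. Structurally this matches the intended argument, and the forward direction and the overall skeleton of the converse are correct.

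The one step that does not hold up as written is your exclusion of the one-ended case. The justification ``a tree is separated into unboundedly many unbounded pieces by bounded sets'' is false for a ray or a bi-infinite line, so it cannot be the reason a one-ended group fails to be quasi-isometric to a tree; and this is not a throwaway remark, since you reuse exactly this fact at the end to force the vertex groups of the terminal decomposition to be finite rather than one-ended, which is where the real content of the converse sits. A correct argument: a group quasi-isometric to a tree is hyperbolic with Gromov boundary homeomorphic to the space of ends of the tree, hence totally disconnected; a one-ended hyperbolic group is infinite and not virtually cyclic, so its boundary is an infinite connected space, which cannot be totally disconnected. (Alternatively, one can dispose of the case where the tree has at most two ends by a linear-growth argument and then your separation claim becomes true for the remaining trees.) With that step repaired, and granting the standard facts you invoke without proof --- that vertex groups of a splitting over finite subgroups are undistorted, and that a quasi-isometrically embedded geodesic subspace of a quasi-tree is a quasi-tree (e.g.\ via Manning's bottleneck criterion) --- the argument goes through.
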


			\begin{dfn}
				Let \(G\) be a group with a finite inverse closed generating set
				\(\Sigma\) and let \(H \leq G\). The \textit{(right) Schreier coset
				graph} of \((G, H)\) with respect to \(\Sigma\) is the directed
				\(\Sigma\)-edge-labelled graph whose vertices are the right cosets of
				\(H\) in \(G\), and with an edge labelled \(a\) from each coset \(Hg\)
				to \(Hga\) for all \(a \in \Sigma\).
			\end{dfn}

			\begin{rmk}
				As with Cayley graphs, Schreier coset graphs are dependent on the choice
				of generating set, however different Schreier coset graphs for the same
				pair \((G, H)\), where \(G\) is finitely generated will be
				quasi-isometric. We will thus refer to the Schreier coset graph of
				\((G, H)\) when talking about properties of graphs that are invariant
				under quasi-isometries.
			\end{rmk}

			\begin{dfn}
				A graph \(\Gamma\) is called \textit{(vertex) transitive} if for all
				\(u, v \in V(\Gamma)\) there exists \(\phi \in \Aut(\Gamma)\) with \((u)
				\phi = v\); that is, \(\Gamma\) has a unique automorphic orbit. We say
				\(\Gamma\) is \textit{quasi-transitive} if it has finitely many
				automorphic orbits.
			\end{dfn}

		\subsection{Tree amalgamations}
      \label{subsec:tree-amalg}
		  The Muller-Schupp Theorem was proved by first showing that if a finitely
		  generated group has a  context-free word problem, then its Cayley graph is
		  quasi-isometric to a tree. Then Stallings' Theorem together with
		  Dunwoody's (later) accessibility result showed that a finitely generated
		  group that is quasi-isometric to a tree is virtually free. At this point,
			the proof can be completed by showing that virtually free groups have
			(deterministic) context-free word problems.

      In Section~\ref{sec:cosets}, we extend this result to show that a coset
      of a subgroup of a finitely generated group whose Schreier coset graph is
      quasi-transitive, is recognisably context-free if and only if the
      Schreier coset graph is a quasi-tree. For this, we a generalisation of
      Stallings' Theorem to quasi-transitive graphs. Thus gives us an
      alternative characterisation of quasi-trees which can be used to show
      that the `language' of a transitive quasi-tree is context-free.

      The definitions in the subsection are from Section 5 of
      \cite{HamannLehnerMiraftabRuhmann}. We begin with the definition of a
      tree amalgamation. All of the graphs used here are considered to be
      simple graphs (that is, no multiple edges, loops or directions). Since we
      only use tree amalgamations to show graphs are quasi-isometric, and since
      forgetting directions of Cayley and Schreier graphs of groups does not
      affect the metric, this is sufficient for our purposes. Since we have at
      most one edge between two vertices, we can define edges to be subsets of
      the set of vertices of size \(2\); the two vertices in each edge being
      its endpoints.
			\begin{dfn}
				Let \(\Gamma_1\) and \(\Gamma_2\) be graphs. Let \((S_k^i)_{i \in I_i}\)
				be a collection of subsets of \(V(\Gamma_i)\), for \(i \in \{1, 2\}\),
				where each \(I_i\) is an index set, such that all \(S_k^i\) have the
				same cardinality and \(I_1 \cap I_2 = \varnothing\). For each \(k \in
				I_1\) and \(l \in I_2\) let \(\phi_{kl} \colon S_k^1 \to S_l^2\) be a
				bijection, and let \(\phi_{lk} = \phi_{kl}^{-1}\).

				Let \(T\) be the \((|I_1|, |I_2|)\)-semiregular tree; that is the
				bipartite tree whose vertices are partitioned into \(V(T) = V_1 \cup
				V_2\) such that all vertices in \(V_i\) have degree \(|I_i|\). Let
				\(D(T)\) be the set of directed edges obtained from \(T\) by taking
				each edge \(\{u, v\}\) and taking its two directed versions \((u, v)\)
				and \((v, u)\). We also attach a labelling \(f \colon D(T) \to I_1 \cup
				I_2\) such that for all \(t \in V_i\) the set of labels of the incident
				is precisely the set \(I_i\) and each label occurs on precisely one
				incident edge.

				For each \(t \in V_i\) let \(\Gamma_t\) be an isomorphic copy of
				\(\Gamma_i\). Let \(S_k^t\) denote the copy of \(S_k^i\) within
				\(\Gamma_t\). Let \(\Lambda\) be the disjoint union the graphs \(\Gamma_t\) for all
				\(t \in V(T)\). We now quotient \(\Lambda\) as follows. For each edge \((s, t) \in D(T)\) with \((s, t))f = k\) and \((t, s)f = l\), identify all vertices \(v \in
				S_k^s\) with the vertex \((v)\phi_{kl}\) in \(S_l^{t}\). The quotient
				graph obtained is called the \textit{tree amalgamation} of \(\Gamma_1\)
				with \(\Gamma_2\) over the \textit{connecting tree} \(T\), and denoted
				\(\Gamma_1 \ast \Gamma_2\).

				These functions \(\phi_{kl}\)	called the \textit{bonding maps} of the
				tree amalgamation and the sets \(S_k^i\) are called the \textit{adhesion
				sets}. If all adhesion sets within a tree amalgamation are finite, the
				tree amalgamation is said to have \textit{finite adhesion}. The
				\textit{identification size} of a vertex \(v \in V(\Gamma_1 \ast
				\Gamma_2)\) is number of vertices in \(V(\Lambda)\) that are identified
				when quotienting to obtain \(\Gamma_1 \ast \Gamma_2\). The tree
				amalgamation has \textit{finite identification} if all identification
				sizes are finite.
			\end{dfn}

      We refer the reader to \cite[Examples
      5.2-5.6]{HamannLehnerMiraftabRuhmann} for a variety of examples of tree
      amalgamations.

      As with groups we also require an accessibility result; it is not enough
      to say graphs can successively be expressed as tree amalgamations, this
      process must terminate, and the resultant graphs must be sufficiently
      understood. We combine these concepts in the following definition.

      \begin{dfn}
        A graph \(\Gamma\) is said to admit a \textit{terminal factorisation
        of finite graphs} if there exists a finite collection of finite
        graphs from which \(\Gamma\) can be built by a finite sequence
        of successive tree amalgamations with finite adhesion and finite
        identification.
      \end{dfn}

      We state two results which together give a classification of connected
      quasi-transitive locally finite quasi-trees. These theorems mention the
      property of having only thin ends. When combined together, this property
      is not present, so we do not define it here.

      \begin{theorem}[{\cite[Theorem 5.5]{KrohnMoller}}]
        \label{thm:qt-thin-end}
        A connected quasi-transitive locally finite graph has only thin ends
        if and only if it is a quasi-tree.
      \end{theorem}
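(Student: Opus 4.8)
The plan is to prove the two implications separately, after recalling the relevant notions. Recall that an \emph{end} of a connected locally finite graph \(\Gamma\) is an equivalence class of rays (one-way infinite simple paths), where two rays are equivalent if for every finite \(F \subseteq V(\Gamma)\) their tails lie in the same connected component of \(\Gamma - F\); the \emph{degree} of an end is the supremum of the cardinalities of families of pairwise vertex-disjoint rays belonging to it, and an end is \emph{thin} if this degree is finite.

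For the direction that a quasi-tree has only thin ends, I would first observe that every end of a locally finite tree \(T\) has degree one: any two rays in the same end of \(T\) must eventually share a common tail, since a tree has unique geodesics, and so \(T\) cannot contain two disjoint rays converging to the same end. I would then use the standard fact that a quasi-isometry between connected locally finite graphs induces a homeomorphism of their end spaces and coarsely preserves disjointness of rays, so that finiteness of end-degree is a quasi-isometry invariant (here quasi-transitivity supplies the uniform control needed to transport a bounded family of disjoint rays in either direction). Transporting the degree-one ends of the tree across the quasi-isometry then bounds the degree of every end of \(\Gamma\), so \(\Gamma\) has only thin ends.

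For the converse, that a connected quasi-transitive locally finite graph \(\Gamma\) with only thin ends is a quasi-tree, I would combine accessibility with the \(m\)-triangulation criterion of \cref{thm:qt-triangulation}. The route is: such a \(\Gamma\) is accessible, so it is built by successive tree amalgamations with finite adhesion, in the sense of \cref{subsec:tree-amalg}, in which the separations correspond to finite-order cuts of \(\Gamma\). Quasi-transitivity bounds the end-degrees across the finitely many orbits of ends, so the hypothesis that all ends are thin yields a \emph{uniform} bound on these degrees; this is what forces the adhesion sets to have bounded size and prevents any piece from carrying a thick end. From a decomposition with uniformly bounded finite adhesion I would then read off an \(m\)-triangulation of an arbitrary circuit of \(\Gamma\) by cutting the circuit at the boundedly many separators it crosses and triangulating each resulting arc inside its (uniformly bounded) piece, with \(m\) depending only on the adhesion bound; by \cref{thm:qt-triangulation} this exhibits \(\Gamma\) as a quasi-tree.

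The hard part will be the converse, and within it the precise step that turns the qualitative hypothesis ``every end is thin'' into the quantitative input ``all separators in a tree-decomposition have bounded size''. This is exactly where accessibility of quasi-transitive locally finite graphs and the structure-tree theory of ends (Dunwoody's tracks and the tree amalgamation machinery recalled in \cref{subsec:tree-amalg}) are indispensable: a thick end would obstruct any such bounded decomposition and, correspondingly, any uniform \(m\)-triangulation, so the whole argument hinges on ruling thick ends out uniformly and converting that into a single triangulation constant.
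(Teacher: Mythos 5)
You should first note that the paper does not prove this statement at all: it is imported verbatim as Theorem 5.5 of Kr\"{o}n and M\"{o}ller and used as a black box (combined with \cref{thm:thin-ends-fact} to deduce \cref{thm:qt-fact}), so your proposal has to stand on its own, and as written it has genuine gaps in both directions. In the forward direction, the mechanism you invoke --- that a quasi-isometry ``coarsely preserves disjointness of rays'', so that end-degree, or at least its finiteness, can be transported from the tree --- is not correct as stated. Disjointness of rays is not a coarse notion: the bi-infinite ladder is quasi-isometric to the line, yet each of its two ends has degree \(2\) while each end of the line has degree \(1\), so ``transporting the degree-one ends of the tree'' cannot bound the degree of an end of \(\Gamma\) by the degree of the corresponding end of the tree. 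The weaker claim you actually need --- that \emph{finiteness} of end-degree is a quasi-isometry invariant of connected locally finite graphs --- is true here but is essentially the content of the theorem; it requires a real argument (for instance via finite separators: \(n\) pairwise disjoint rays to a common end force every finite vertex set separating that end from a basepoint to eventually have at least \(n\) elements, and uniformly bounded such separators are exactly what being a quasi-tree provides). A further obstruction to the naive transport is that the image of a ray under a quasi-isometry need not be a quasi-geodesic, so it need not fellow-travel any geodesic ray of the tree.

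In the converse direction you lean on two things that are not free. First, ``such a \(\Gamma\) is accessible'' is itself a substantial theorem (Thomassen--Woess for locally finite quasi-transitive graphs with only thin ends, or \cref{thm:thin-ends-fact}); citing it is legitimate, but it is doing all the work and must be cited as such rather than asserted. Second, and more seriously, the passage from a terminal factorisation with finite adhesion to an \(m\)-triangulation is asserted rather than carried out, and the quantitative claim on which you hang it --- that an arbitrary circuit ``crosses boundedly many separators'' --- is false: a circuit of length \(\ell\) can cross on the order of \(\ell\) adhesion sets. The triangulation must instead be organised along the structure tree (cut the circuit at an adhesion set whose factor is farthest from the root, collapse the excursion into that factor using the bounded diameter of the factor and adhesion set, and recurse), which is precisely the kind of bookkeeping the paper performs at the level of languages in \cref{lem:tree-amalg-cf}. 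So your outline names the right ingredients, but the two steps where all the content lives --- quasi-isometry invariance of thinness, and decomposition implies uniform triangulation --- are each left at the level of a sentence that, taken literally, is either false or circular.
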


      \begin{theorem}[{\cite[Theorem 7.5]{HamannLehnerMiraftabRuhmann}}]
        \label{thm:thin-ends-fact}
        A connected quasi-transitive locally finite graph has only thin ends
        if and only if it admits a terminal factorisation of finite graphs.
      \end{theorem}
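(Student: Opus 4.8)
The plan is to prove the two implications separately, and to exploit the characterisation in \cref{thm:qt-thin-end} to pass freely between the ``only thin ends'' condition and the statement that $\Gamma$ is a quasi-tree. For the direction that a terminal factorisation forces all ends to be thin, I would induct on the length of the factorisation. The base case is a finite graph, which has no ends and so is vacuously fine. For the inductive step, suppose $\Gamma = \Gamma_1 \ast \Gamma_2$ is a tree amalgamation with finite adhesion and finite identification over a connecting tree $T$, where $\Gamma_1$ and $\Gamma_2$ are built from strictly shorter factorisations and hence, by the inductive hypothesis, have only thin ends. Every end $\omega$ of $\Gamma$ either has a ray eventually trapped inside a single copy $\Gamma_t$ of a factor, in which case $\omega$ restricts to a thin end of that factor, or it escapes along $T$, tracking an end of the connecting tree. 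In the latter case the finite adhesion sets serve as separators of uniformly bounded size strung along the corresponding ray of $T$, so only boundedly many disjoint rays can follow $\omega$, and $\omega$ is thin. Alternatively, and more cheaply, one checks that a tree amalgamation of finite graphs over finite adhesion sets is at bounded Hausdorff distance from its connecting tree $T$, hence a quasi-tree, and then invokes \cref{thm:qt-thin-end}.

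For the converse I would follow the classical route in the quasi-transitive graph setting, decomposing it into a \emph{splitting} step and an \emph{accessibility} step, exactly paralleling the passage from Stallings' theorem to Dunwoody's accessibility theorem. For the splitting step: if $\Gamma$ is finite we are done, so assume it is infinite, whence it has at least one end, all of which are thin by hypothesis. I would use the thinness of the ends to produce a finite separation of $\Gamma$ whose $\Aut(\Gamma)$-orbit forms a \emph{nested} family of finite separations. A nested, automorphism-invariant family of finite separations yields a structure tree, and this structure tree presents $\Gamma$ as a nontrivial tree amalgamation of quasi-transitive graphs that are, in a suitable sense, ``smaller'' than $\Gamma$, with finite adhesion coming from the finite separators and finite identification guaranteed by local finiteness.

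The accessibility step is where I expect the real difficulty to lie, and it is \textbf{the main obstacle}. There is no a priori reason that iterating the splitting construction should terminate: one must rule out an infinite regress of ever-finer tree-amalgamation decompositions and guarantee that the process halts with finite graphs as the factors. In the group-theoretic analogue this is precisely Dunwoody's accessibility theorem, and here one needs a genuinely graph-theoretic version valid for all connected quasi-transitive locally finite graphs. The thinness of the ends is exactly the hypothesis that unlocks this — a single thick end would allow the decomposition to continue indefinitely — so the argument should hinge on extracting from the bounded thickness of the ends a complexity measure (for instance a global bound on the sizes of the finite separators that ever need to be used, or a monovariant attached to the separation system) that strictly decreases under each splitting. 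Once such a measure is in place, the iteration terminates after finitely many steps, and the finitely many factors left at the end constitute the required terminal factorisation of finite graphs.

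I would note in passing that, because \cref{thm:qt-thin-end} already equates ``only thin ends'' with being a quasi-tree, the content of the theorem can be read as the equivalence ``quasi-tree $\iff$ terminal factorisation of finite graphs''; the forward direction of that reading is what the splitting-plus-accessibility argument supplies, while the reverse direction is the easy induction above.
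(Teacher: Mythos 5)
This statement is not proved in the paper at all: it is imported verbatim as \cite[Theorem 7.5]{HamannLehnerMiraftabRuhmann}, so there is no in-paper argument to compare yours against. Judged on its own terms, your proposal correctly identifies the architecture of the actual proof in that reference (a Stallings-type splitting via a nested, automorphism-invariant system of finite separations and an associated structure tree, followed by an accessibility argument), and your easy direction is essentially sound, provided you are careful that the intermediate factors in a terminal factorisation are themselves infinite tree amalgamations rather than finite graphs, so the induction on the length of the factorisation, not the ``bounded Hausdorff distance from $T$'' shortcut, is the version that actually closes.

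The genuine gap is the one you flag yourself: the accessibility step is not an implementation detail to be supplied by ``a complexity measure to be determined'' --- it is the substantive content of the theorem. For finitely generated groups the analogous termination statement is Dunwoody's accessibility theorem, which is a deep result and is moreover \emph{false} without the finite-presentability hypothesis (Dunwoody's inaccessible group), so one cannot expect a routine monovariant to fall out of the definitions. In the quasi-transitive locally finite setting the termination for graphs with only thin ends rests on a theorem of Thomassen and Woess that such graphs are accessible, i.e.\ that there is a uniform bound on the size of the finite separators needed to separate any two ends; your sketch neither states nor proves this, and without it the iterated splitting could a priori continue forever. The splitting step itself (producing a nested invariant family of finite separations inducing a tree amalgamation with finite adhesion and finite identification) is likewise a nontrivial theorem of \cite{HamannLehnerMiraftabRuhmann} rather than a formal consequence of thinness. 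So what you have is a correct roadmap to the cited proof, with its two main theorems left as acknowledged black boxes, not a proof.
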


      Combining Theorem~\ref{thm:qt-thin-end} with
      Theorem~\ref{thm:thin-ends-fact} gives the following.

      \begin{theorem}
        \label{thm:qt-fact}
        A connected quasi-transitive locally finite graph is a quasi-tree
        if and only if it admits a terminal factorisation of finite graphs.
      \end{theorem}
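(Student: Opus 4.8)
The plan is to observe that this statement is an immediate corollary obtained by concatenating the two equivalences stated just above it, both of which pass through the common intermediate property of ``having only thin ends''. No new graph-theoretic argument is needed; the entire content resides in the cited results \ref{thm:qt-thin-end} and \ref{thm:thin-ends-fact}, and the only thing to do here is chain them via transitivity of the biconditional.

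Concretely, I would fix a connected quasi-transitive locally finite graph \(\Gamma\). First I would invoke \cref{thm:qt-thin-end}, which tells us that \(\Gamma\) is a quasi-tree if and only if \(\Gamma\) has only thin ends. Next I would invoke \cref{thm:thin-ends-fact}, which tells us that \(\Gamma\) has only thin ends if and only if \(\Gamma\) admits a terminal factorisation of finite graphs. Since both hypotheses (connected, quasi-transitive, locally finite) are exactly those of \(\Gamma\), both equivalences apply verbatim, and composing the two ``if and only if'' statements yields that \(\Gamma\) is a quasi-tree if and only if it admits a terminal factorisation of finite graphs, which is precisely the claim.

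There is no genuine obstacle at this stage: the two difficult directions -- the tree-amalgamation accessibility half from \cite{HamannLehnerMiraftabRuhmann} and the ends-versus-quasi-tree half from \cite{KrohnMoller} -- are exactly what the quoted theorems supply, and the notion of ``thin ends'' is introduced only as a bridge between them (which is why the paper deliberately omits its definition, as it disappears from the combined statement). The one point worth a moment's care is simply to check that the class of graphs is identical across the two inputs so that the intermediate property can be cancelled; since both are stated for connected quasi-transitive locally finite graphs, this is automatic. Hence the proof is a one-line combination of the two preceding theorems.
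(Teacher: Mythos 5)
Your proposal is correct and matches the paper exactly: the paper derives \cref{thm:qt-fact} by combining \cref{thm:qt-thin-end} and \cref{thm:thin-ends-fact} through the shared intermediate property of having only thin ends, with no additional argument. Nothing further is needed.
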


\section{Basic properties}
  \label{sec:basic-properties}
	This section covers various basic (closure) properties of recognisably
	context-free sets. We begin with a result of Herbst.

  \begin{proposition}[\cite{one_counter_groups}, Lemma 4.1]
		\label{prop:rat-times-RCF}
  	Let \(G\) be a finitely generated group, let \(A \subseteq G\) be
		recognisably context-free and let \(R \subseteq G\) be rational. Then
		\(AR\) and \(RA\) are recognisably context-free.
  \end{proposition}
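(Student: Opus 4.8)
The plan is to prove the statement for \(AR\); the argument for \(RA\) is its mirror image. The key realisation is that \((AR)\pi^{-1}\) is not a \emph{concatenation} but a \emph{right quotient} of the context-free language \(A\pi^{-1}\) by a regular language, and that this works precisely because \(A\pi^{-1}\) is the \emph{full} preimage of \(A\). By Lemma~\ref{gen_set_lem}, together with the fact that rationality is likewise independent of the generating set, I may enlarge \(\Sigma\) and assume it is inverse-closed; write \(R = L\pi\) with \(L \subseteq \Sigma^\ast\) regular, and for a word \(v\) let \(\bar v\) denote its formal inverse (the reversal of \(v\) with every letter inverted), so that \(\bar v\pi = (v\pi)^{-1}\). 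Set \(\overline L = \{\bar v : v \in L\}\); this is regular, since regular languages are closed under reversal and under the relabelling \(a \mapsto a^{-1}\).

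Next I would establish the identity
\[
  (AR)\pi^{-1} \;=\; \{\, w \in \Sigma^\ast : \exists\, x \in \overline L,\ wx \in A\pi^{-1} \,\},
\]
namely the right quotient \((A\pi^{-1})/\overline L\). For any \(w \in \Sigma^\ast\), we have \(w\pi \in AR\) if and only if \((w\pi)r^{-1} \in A\) for some \(r \in R\); writing \(r = v\pi\) with \(v \in L\) gives \((w\pi)r^{-1} = (w\bar v)\pi\), so the condition becomes \((w\bar v)\pi \in A\), that is, \(w\bar v \in A\pi^{-1}\). This equivalence is where the hypothesis is used essentially: because \(A\pi^{-1}\) is the full preimage, membership of \(w\bar v\) in it depends only on the group element \((w\bar v)\pi\), which is exactly what allows an \emph{arbitrary} representative \(w\) of an element of \(AR\) to be completed by a suffix from \(\overline L\). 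I would flag this as the main conceptual obstacle: the naive guess \((AR)\pi^{-1} = (A\pi^{-1})\cdot L\) yields only the inclusion \((A\pi^{-1})\cdot L \subseteq (AR)\pi^{-1}\), and fails in the reverse direction because a word representing an element of \(AR\) need not end in a subword lying in \(L\); passing to the quotient repairs precisely this failure.

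It then remains to prove that context-free languages are closed under right quotient by a regular language, which is not among the closure properties stated above but follows from them by a marker argument. Given context-free \(C\) and regular \(K\) over \(\Sigma\), I would introduce a disjoint marked copy \(\hat\Sigma = \{\hat a : a \in \Sigma\}\) and the free monoid homomorphisms \(\phi, \psi \colon (\Sigma \cup \hat\Sigma)^\ast \to \Sigma^\ast\) determined by \(a\phi = \hat a\phi = a\) and \(a\psi = a\), \(\hat a\psi = \varepsilon\). Writing \(\widehat K\) for the language obtained from \(K\) by marking every letter (regular, being a relabelling of \(K\)), one has
\[
  C/K \;=\; \bigl(C\phi^{-1} \cap \Sigma^\ast \widehat K\bigr)\psi ,
\]
since a word in \(C\phi^{-1} \cap \Sigma^\ast \widehat K\) is exactly one of the form \(u\hat x\) with \(u \in \Sigma^\ast\), \(x \in K\), and \(ux = (u\hat x)\phi \in C\), which \(\psi\) sends to \(u\). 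This language is context-free by closure under homomorphic preimage, intersection with a regular language, and homomorphic image (Lemma~\ref{lem:cf-closure}).

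Applying the quotient closure with \(C = A\pi^{-1}\) and \(K = \overline L\) shows that \((AR)\pi^{-1}\) is context-free, so \(AR\) is recognisably context-free. The statement for \(RA\) follows identically: one checks that \((RA)\pi^{-1} = \{\, w : \exists\, x \in \overline L,\ xw \in A\pi^{-1}\,\}\) is the \emph{left} quotient \(\overline L \backslash (A\pi^{-1})\), and the same marker construction (with the marked factor now a prefix) gives its context-freeness.
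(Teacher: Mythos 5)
Your proof is correct. Note first that the paper itself gives no argument for this proposition: it is imported directly from Herbst (\cite{one_counter_groups}, Lemma 4.1), so there is no in-paper proof to compare yours against, and a self-contained argument is genuinely useful. Your key step --- rewriting \((AR)\pi^{-1}\) as the right quotient \((A\pi^{-1})/\overline{L}\) rather than the concatenation \((A\pi^{-1})\cdot L\) --- is exactly the right move, and your diagnosis of why the concatenation only gives one inclusion (an arbitrary word representing an element of \(AR\) need not end in a subword from \(L\), whereas the quotient exploits the fact that membership in the \emph{full} preimage \(A\pi^{-1}\) depends only on the group element represented) is the essential content of the lemma. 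The reduction to an inverse-closed generating set via Lemma~\ref{gen_set_lem} and the invariance of rationality is legitimate, the identity \(w\pi\in AR \iff \exists v\in L,\ w\bar v\in A\pi^{-1}\) is verified correctly, and the marker construction \(C/K = \bigl(C\phi^{-1}\cap \Sigma^\ast\widehat{K}\bigr)\psi\) is a standard and correctly executed proof that context-free languages are closed under right (and, symmetrically, left) quotient by a regular language, using only the closure properties already recorded in Lemma~\ref{lem:cf-closure}. I have no corrections.
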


	We rarely use Proposition~\ref{prop:rat-times-RCF} in its full generality. It
	is mostly used when \(R\) is a singleton (which is always rational as the
	image of a one-word language). We thus state it in this restricted form to
	make it clear which rational subset we are using.

  \begin{cor}
    \label{cor:post-multiply-cf}
    Let \(G\) be a finitely generated group, and let \(A \subseteq G\) be
    recognisably context-free. For all \(g \in G\), \(Ag\) and \(gA\) are
    recognisably context-free.
  \end{cor}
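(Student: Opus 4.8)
The plan is to derive this as the special case of Proposition~\ref{prop:rat-times-RCF} in which the rational set $R$ is a singleton, so the only thing that genuinely needs checking is that $\{g\}$ is a rational subset of $G$. First I would fix a finite monoid generating set $\Sigma$ for $G$ with natural map $\pi \colon \Sigma^\ast \to G$, and choose any word $w \in \Sigma^\ast$ with $w\pi = g$ (such a word exists because $\Sigma$ generates $G$). The one-word language $L = \{w\}$ is finite and hence regular, so $\{g\} = L\pi$ exhibits $\{g\}$ as the image of a regular language under $\pi$; by definition $\{g\}$ is rational. (Rationality is independent of the choice of generating set, as noted after the definition of rational sets, so this is unambiguous.)

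With that observation in hand, the result follows immediately. Since $A$ is recognisably context-free and $\{g\}$ is rational, Proposition~\ref{prop:rat-times-RCF} applied to the product $A\{g\} = Ag$ gives that $Ag$ is recognisably context-free, and applied to $\{g\}A = gA$ gives that $gA$ is recognisably context-free. This completes the argument.

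There is no real obstacle here: the entire content is the specialisation of the already-established closure under multiplication by rational sets to the rational set $\{g\}$, together with the elementary fact that every singleton is rational. I would keep the write-up to a couple of sentences, simply invoking Proposition~\ref{prop:rat-times-RCF} with $R = \{g\}$ and recording that $\{g\}$ is rational.
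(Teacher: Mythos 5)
Your argument is exactly the paper's: the corollary is stated as the special case of Proposition~\ref{prop:rat-times-RCF} with \(R = \{g\}\), and the paper likewise justifies this by noting that a singleton is rational as the image of a one-word language. The proposal is correct and matches the intended proof.
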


	An interesting corollary to Proposition~\ref{prop:rat-times-RCF} is that
	rational subsets of virtually free groups are recognisably context-free.
	The converse is not true - recognisably context-free subsets of virtually
	free groups are not always rational. Conjugacy classes provide one such
	counter-example (see for example Theorem~\ref{thm:conj}). In fact, Herbst
	showed that the class of groups such that a subset is rational if and only if
	it is recognisably context-free is precisely the class of virtually cyclic
	groups \cite[Theorem 3.1]{one_counter_groups}.

  \begin{theorem}[\cite{one_counter_groups}, Lemma 4.2 and Theorem 3.1]
    In a finitely generated virtually free group, every rational subset is
    recognisably context-free. A finitely generated group \(G\) has the property
    that the classes of rational and recognisably context-free sets coincide if
    and only if \(G\) is virtually cyclic.
  \end{theorem}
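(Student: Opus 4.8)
The plan is to split the statement into its two assertions and to lean on the fact that, by the Muller--Schupp Theorem \cite{muller_schupp}, a virtually free group has a context-free word problem, so that $\{1\}$ is recognisably context-free. The first assertion is then immediate: given a rational set $R$ in a virtually free group $G$, I would write $R = \{1\}R$ and apply \cref{prop:rat-times-RCF} with $A = \{1\}$. For the second assertion I first observe that if the two classes coincide then $\{1\}$ (which is always rational) is recognisably context-free, whence $G$ is virtually free; and conversely a virtually cyclic group is virtually free, so the first assertion already supplies the inclusion of rational sets into recognisably context-free sets. Thus everything reduces to showing (i) that in an infinite virtually cyclic group every recognisably context-free set is rational, and (ii) that a virtually free group that is not virtually cyclic carries a recognisably context-free set that is not rational. (Finite groups are trivial, since there every subset is rational.)

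For (i) the crux is $G = \mathbb{Z} = \langle a\rangle$. If $E \subseteq \mathbb{Z}$ is recognisably context-free then $E\pi^{-1} \subseteq \{a, a^{-1}\}^\ast$ is context-free, so by Parikh's theorem its Parikh image is a semilinear subset of $\mathbb{N}^2$; applying the $\mathbb{Z}$-linear map $(\#_a, \#_{a^{-1}}) \mapsto \#_a - \#_{a^{-1}} = \expsum$ keeps it semilinear, and a semilinear subset of $\mathbb{Z}$ is ultimately periodic, hence rational. To reduce a general infinite virtually cyclic $G$ to this case I would fix an infinite cyclic subgroup $C = \langle a\rangle$ of finite index, decompose $G$ into finitely many right cosets $C t_i$, and use $E = \bigcup_i (E t_i^{-1} \cap C) t_i$. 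Here $E t_i^{-1}$ is recognisably context-free by \cref{cor:post-multiply-cf}; since $C$ is recognisable by \cref{prop:rec-classification} its preimage is regular, so $E t_i^{-1} \cap C$ is again recognisably context-free by \cref{lem:cf-closure}; transporting along the inclusion $\{a, a^{-1}\}^\ast \to \Sigma^\ast$ exactly as in \cref{gen_set_lem} shows this subset is recognisably context-free, and hence rational, in $C \cong \mathbb{Z}$; finally rational subsets of $C$ are rational in $G$, and right-translating by $t_i$ and taking the finite union preserves rationality.

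For (ii) I would exhibit an explicit example. Choose a finite-index free subgroup $F = \langle x_1, \dots, x_r\rangle$; since $G$ is not virtually cyclic we may take $r \geq 2$. Let $\phi \colon F \to \mathbb{Z}$ send $x_1 \mapsto 1$ and $x_i \mapsto 0$ for $i \geq 2$, and put $K = \ker\phi$. Then $K\pi_F^{-1}$ is the language of words containing equally many $x_1$ and $x_1^{-1}$, which is context-free (compare \cref{ex:wp-Z-CF}), so $K$ is recognisably context-free in $F$; as $G$ is virtually free and $F$ is finitely generated, Carvalho's theorem \cite{Carvalho} upgrades this to $K$ being recognisably context-free in $G$. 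On the other hand $K$ is a nontrivial normal subgroup of infinite index in the free group $F$ and so is infinitely generated; by the Anisimov--Seifert theorem a rational subgroup is finitely generated, so $K$ is not rational in $G$. This exhibits the required discrepancy.

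The step I expect to be the main obstacle is (i): Parikh's theorem is the genuine constraint that pins recognisably context-free subsets of $\mathbb{Z}$ down to rational ones, and the descent from an arbitrary infinite virtually cyclic group to $\mathbb{Z}$ requires careful coset bookkeeping while remaining inside both classes of sets. The first assertion and direction (ii) are comparatively short once the example in (ii) has been chosen.
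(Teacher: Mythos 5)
Your proposal is correct, but it is doing something the paper never does: the paper states this result as a citation to Herbst \cite{one_counter_groups} and supplies no proof, beyond deriving the first assertion in the surrounding text exactly as you do, by writing a rational set $R$ as $\{1\}R$ and feeding the Muller--Schupp Theorem into Proposition~\ref{prop:rat-times-RCF}. Your reconstruction of the second assertion is sound. The reduction to your claims (i) and (ii) is the right decomposition; for (i), Parikh's theorem does the real work, since the image of a linear subset of $\mathbb{N}^2$ under $(m,n)\mapsto m-n$ is a coset of a finitely generated submonoid of $\mathbb{Z}$, hence a finite union of (possibly one-sided) arithmetic progressions and therefore rational, and the descent from an infinite virtually cyclic $G$ to $C\cong\mathbb{Z}$ via $E=\bigcup_i\bigl((Et_i^{-1})\cap C\bigr)t_i$ uses only Corollary~\ref{cor:post-multiply-cf}, the intersection-with-recognisable statement (which is Lemma~\ref{lem:cf-rec-int-reg-rec} rather than Lemma~\ref{lem:cf-closure}, a harmless slip), and the preimage trick of Lemma~\ref{gen_set_lem}, which does show that a subset of $C$ that is recognisably context-free in $G$ is recognisably context-free in $C$. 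For (ii), the kernel $K$ of $F_r\to\mathbb{Z}$ with $r\ge 2$ works: it is recognisably context-free in $F$ by the counting argument of Example~\ref{ex:wp-Z-CF}, passes up to $G$ by Carvalho's theorem \cite{Carvalho}, and is a nontrivial infinite-index normal subgroup of a free group, hence infinitely generated and therefore not rational by Anisimov--Seifert. The paper instead points to conjugacy classes (Theorem~\ref{thm:conj}) as its example of recognisably context-free but non-rational sets; your example avoids the machinery of Section~\ref{sec:conjugacy} at the price of importing Carvalho's theorem, which is itself a substantial result, though one the paper already quotes. In short: a correct, self-contained proof of a statement the paper only cites.
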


	Using the fact that context-free languages are stable under intersections
	with regular languages, we can make the following observation.

  \begin{lem}
    \label{lem:cf-rec-int-reg-rec}
    The intersection of a recognisably context-free set with a recognisable
    set is recognisably context-free.
  \end{lem}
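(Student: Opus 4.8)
The plan is to reduce this directly to the closure of context-free languages under intersection with regular languages, which is already available as part of \cref{lem:cf-closure}. Fix a finite monoid generating set \(\Sigma\) for \(G\) and let \(\pi \colon \Sigma^\ast \to G\) be the natural homomorphism. Let \(A \subseteq G\) be recognisably context-free and \(B \subseteq G\) be recognisable, so that \(A \pi^{-1}\) is context-free and \(B \pi^{-1}\) is regular (by \cref{prop:rec-classification}, or simply by the definition of a recognisable set, \(B\) is recognisable precisely when \(B\pi^{-1}\) is regular). We are free to work with any single generating set, since \cref{gen_set_lem} guarantees that being recognisably context-free is independent of the choice of \(\Sigma\).

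The central observation is the purely set-theoretic identity that taking full preimages commutes with intersection: for any \(w \in \Sigma^\ast\), we have \(w \in (A \cap B)\pi^{-1}\) if and only if \(w\pi \in A\) and \(w\pi \in B\), which is exactly the condition \(w \in A\pi^{-1} \cap B\pi^{-1}\). Hence \((A \cap B)\pi^{-1} = A\pi^{-1} \cap B\pi^{-1}\). The right-hand side is the intersection of a context-free language with a regular language, and so it is context-free by \cref{lem:cf-closure}. Therefore \((A \cap B)\pi^{-1}\) is context-free, which by definition means \(A \cap B\) is recognisably context-free.

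There is essentially no obstacle here: the entire content is the preimage-intersection identity together with the already-quoted closure property, so the proof will be only a couple of lines. The one point worth stating carefully is that \(\pi^{-1}\) is applied to subsets of \(G\) written on the right in keeping with the paper's convention, and that the identity \((A \cap B)\pi^{-1} = A\pi^{-1} \cap B\pi^{-1}\) holds for the full preimage under an arbitrary map, so no group-theoretic input beyond the definitions is needed.
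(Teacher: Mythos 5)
Your proposal is correct and matches the paper's proof essentially verbatim: both fix a generating set, use the identity \((A \cap B)\pi^{-1} = A\pi^{-1} \cap B\pi^{-1}\), and invoke closure of context-free languages under intersection with regular languages from \cref{lem:cf-closure}. No gaps.
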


  \begin{proof}
    Let \(G\) be a finitely generated group, \(C \subseteq G\) be recognisable
    context-free, and \(R \subseteq G\) be recognisable. Fix a finite monoid
    generating set \(\Sigma\) for \(G\), and let \(\pi \colon \Sigma^\ast \to
    G\) be the natural homomorphism. Then \(C \pi^{-1} \cap R \pi^{-1} = (C \cap
    R) \pi^{-1}\). As the intersection of a context-free language with a regular
    language, this language is context-free (Lemma~\ref{lem:cf-closure}). Thus
    \(C \cap R\) is recognisably context-free.
  \end{proof}

	Using Lemma~\ref{lem:cf-rec-int-reg-rec} along with the fact that cosets of
	finite-index subgroups are always recognisable, we can classify recognisably
	context-free subsets of a group in terms of recognisably context-free subsets
	of any of its finite-index subgroups.

  \begin{proposition}[{\cite[Proposition 3.6]{Carvalho}}]
    Let \(G\) be a finitely generated group, and \(H\) be a finite-index
    subgroup. Let \(T\) be a (finite) right transversal for \(H\) in \(G\).
    Suppose \(C\) is a recognisably context-free subset of \(G\). Then for each
    \(t \in T\) there exists a recognisably context-free \(C_t\) of \(H\),
    such that
    \[
      C = \bigcup_{t \in T} C_t t.
    \]
  \end{proposition}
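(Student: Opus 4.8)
The plan is to define, for each \(t \in T\), the candidate subset \(C_t := (C \cap Ht)t^{-1} \subseteq H\), and then to verify three things in turn: that these sets give the claimed decomposition, that each \(C_t\) is recognisably context-free as a subset of \(G\), and finally---the crux---that each \(C_t\) is recognisably context-free as a subset of the subgroup \(H\) itself.

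For the decomposition, I use that \(H\) has finite index with right transversal \(T\), so that \(G = \bigcup_{t \in T} Ht\) is a disjoint union. Hence every \(g \in C\) lies in a unique coset \(Ht\) and is written uniquely as \(g = (gt^{-1})t\) with \(gt^{-1} \in H\). This gives \(C \cap Ht = C_t t\) with \(C_t \subseteq H\), and therefore \(C = \bigcup_{t \in T} C_t t\), as required.

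Next I show each \(C \cap Ht\) is recognisably context-free in \(G\). The coset \(Ht\) is a single coset of a finite-index subgroup, hence recognisable by Proposition~\ref{prop:rec-classification}, and the intersection of a recognisably context-free set with a recognisable set is recognisably context-free by Lemma~\ref{lem:cf-rec-int-reg-rec}. Right-multiplying by the fixed element \(t^{-1}\) then keeps \(C_t = (C \cap Ht)t^{-1}\) recognisably context-free in \(G\), by Corollary~\ref{cor:post-multiply-cf}.

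The main obstacle is the final step: passing from recognisably context-free in \(G\) to recognisably context-free in \(H\). Here I use that \(H\) is finitely generated (being finite-index in the finitely generated \(G\)) and choose compatible generating sets. Let \(\Sigma_H\) be a finite monoid generating set for \(H\), and set \(\Sigma_G := \Sigma_H \cup T\), which generates \(G\) since \(H\) and \(T\) together do; write \(\pi_H \colon \Sigma_H^\ast \to H\) and \(\pi_G \colon \Sigma_G^\ast \to G\) for the natural maps, and regard \(\Sigma_H^\ast\) as a (regular) subset of \(\Sigma_G^\ast\). By Lemma~\ref{gen_set_lem} I may assume \(C_t \pi_G^{-1}\) is context-free. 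For \(w \in \Sigma_H^\ast\) we have \(w \pi_H = w \pi_G\) in \(G\), and since \(C_t \subseteq H\) this yields \(C_t \pi_H^{-1} = (C_t \pi_G^{-1}) \cap \Sigma_H^\ast\). As \(\Sigma_H^\ast\) is regular in \(\Sigma_G^\ast\), this intersection is context-free by Lemma~\ref{lem:cf-closure}, so each \(C_t\) is recognisably context-free in \(H\), completing the argument.
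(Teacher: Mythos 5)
Your proof is correct and follows precisely the route the paper indicates for this (cited) result: intersect \(C\) with each coset \(Ht\), which is recognisable by Proposition~\ref{prop:rec-classification}, then apply Lemma~\ref{lem:cf-rec-int-reg-rec} and Corollary~\ref{cor:post-multiply-cf} to get the decomposition. The one step the paper leaves implicit --- transferring the property from \(G\) down to the (finitely generated, by Schreier) subgroup \(H\) by choosing the generating set \(\Sigma_H \cup T\) and intersecting with the regular language \(\Sigma_H^\ast\) --- you handle correctly.
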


    %

	It is well-known that the class of context-free languages is stable under
	preimages of free monoid homomorphisms. We prove an analogous statement holds
	for recognisably context-free subsets of a given group.

  \begin{proposition}
    \label{prop:rcf-preimage}
    Let \(G\) and \(H\) be a finitely generated groups and \(\phi \colon G \to
    H\) be a epimorphism. If \(A \subseteq H\) is a recognisably context-free
    subset of \(H\), then \(A \phi^{-1}\) is a recognisably context-free subset
    of \(G\).
  \end{proposition}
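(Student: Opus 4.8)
The plan is to reduce the statement to the closure of context-free languages under preimages of free monoid homomorphisms (\cref{lem:cf-closure}), by constructing a free monoid homomorphism that makes the relevant square of monoid homomorphisms commute. Concretely, fix a finite monoid generating set \(\Sigma\) for \(G\) with natural homomorphism \(\pi_G \colon \Sigma^\ast \to G\), and a finite monoid generating set \(\Delta\) for \(H\) with natural homomorphism \(\pi_H \colon \Delta^\ast \to H\). By \cref{gen_set_lem} the choice of generating sets is immaterial, so it suffices to prove that \((A\phi^{-1})\pi_G^{-1}\) is context-free for these particular choices.

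First I would build a free monoid homomorphism \(\psi \colon \Sigma^\ast \to \Delta^\ast\) lifting \(\phi\). For each \(a \in \Sigma\) the element \((a\pi_G)\phi\) lies in \(H\), and since \(\pi_H\) is surjective (because \(\Delta\) generates \(H\)) there is a word \(\omega_a \in \Delta^\ast\) with \(\omega_a \pi_H = (a\pi_G)\phi\). Setting \(a\psi = \omega_a\) and extending multiplicatively defines \(\psi\). (The hypothesis that \(\phi\) is an epimorphism is not actually used in this construction; the argument goes through for any homomorphism, and only the surjectivity of \(\pi_H\) is needed.)

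The key step is to verify that the square commutes, that is \(\pi_G \phi = \psi \pi_H\) as maps \(\Sigma^\ast \to H\). Both sides are monoid homomorphisms from the free monoid \(\Sigma^\ast\), and by construction they agree on each generator \(a \in \Sigma\), since \(a(\pi_G\phi) = (a\pi_G)\phi = \omega_a \pi_H = (a\psi)\pi_H\); hence they agree on all of \(\Sigma^\ast\). Consequently, for any \(w \in \Sigma^\ast\) we have \(w \in (A\phi^{-1})\pi_G^{-1}\) if and only if \((w\pi_G)\phi \in A\), if and only if \((w\psi)\pi_H \in A\), if and only if \(w \in (A\pi_H^{-1})\psi^{-1}\). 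This gives the set equality \((A\phi^{-1})\pi_G^{-1} = (A\pi_H^{-1})\psi^{-1}\).

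Finally, \(A\pi_H^{-1}\) is context-free because \(A\) is recognisably context-free in \(H\), and preimages of context-free languages under free monoid homomorphisms are context-free (\cref{lem:cf-closure}). Therefore \((A\phi^{-1})\pi_G^{-1}\) is context-free, so \(A\phi^{-1}\) is recognisably context-free in \(G\). The only point requiring any real care — and the closest thing to an obstacle — is checking the commutativity of the diagram cleanly; everything else is bookkeeping, made slightly delicate only by the convention that maps act on the right.
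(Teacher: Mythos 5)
Your proof is correct and follows essentially the same route as the paper: construct a free monoid homomorphism lifting \(\phi\) so that the square commutes, and then invoke closure of context-free languages under preimages of free monoid homomorphisms (\cref{lem:cf-closure}). The only difference is that the paper takes \(\Sigma\phi\) as the generating set of \(H\) (which is where surjectivity of \(\phi\) is used, making the lift letter-to-letter), whereas you work with an arbitrary generating set \(\Delta\) and choose word representatives \(\omega_a\) --- a minor variation which, as you correctly observe, shows the epimorphism hypothesis is not actually needed.
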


  \begin{proof}
    Fix a finite monoid generating set \(\Sigma\) for \(G\). Then \(\Sigma
    \phi\) is a finite monoid generating set for \(H\). Let \(\pi_G \colon
    \Sigma^\ast \to G\) and \(\pi_H \colon (\Sigma \phi)^\ast \to H\) be the
    natural homomorphisms. Let \(\bar{\phi} \colon \Sigma^\ast \to (\Sigma
    \phi)^\ast\) be the homomorphism that extends \(a \mapsto a \phi\) for all
    \(a \in \Sigma\). We will show that \(A \phi^{-1} \pi_G^{-1} = A \pi_H^{-1}
    \bar{\phi}^{-1}\).  To show this, we need to show that \(w \bar{\phi} \pi_H
    \in A\) if and only if \(w \pi_G \phi \in A\). But this is true, as the
    element of \(H\) that \(w \bar{\phi}\) represents is \(w \pi_G \phi\).
  \end{proof}

	\section{Conjugacy classes}
    \label{sec:conjugacy}
		The aim of this section is to classify all finitely generated groups where
		every conjugacy class is recognisably context-free, which ends up being the
		class of virtually free groups. We do not provide a full classification of
		when conjugacy classes are recognisably context-free, although we briefly
    discuss the case when a group admits a recognisably context-free
    conjugacy class.

    Most of the work in this section is therefore to prove that conjugacy
    classes in finitely generated virtually free groups are recognisably
    context-free. Since virtually free groups always admit finite-index normal
    free subgroups, we can define the multiplication in a finitely generated
    virtually free group using a finite-index free normal subgroup, a (finite)
    right transversal, and the action (by automorphisms) of the transversal on
    the normal subgroup. We begin with the definition of a \(\phi\)-cyclic
    permutation, where \(\phi\) is an automorphism of a free group; a
    generalisation of cyclic permutations. Thus before we can show that
    conjugacy classes in virtually free groups are recognisably context-free, we
    must first show that \(\phi\)-twisted conjugacy classes are recognisably
    context-free in free groups, where \(\phi\) is a virtually inner
    automorphism. We start by defining virtually inner automorphisms.

    \begin{dfn}
      Let \(F\) be a finite rank free group. We say an automorphism
      \(\phi \in \Aut(F)\) is \textit{virtually inner} if there exists
      \(k \in \mathbb{Z}_{\geq 0}\) such that \(\phi^k\) is an inner
      automorphism.
    \end{dfn}

		\begin{dfn}
			Let \(F\) be a finite rank free group with basis \(\Sigma\), and \(\phi
			\in \Aut(F)\). Define \(\sim_\phi\) on the set of freely reduced words in
			\((\Sigma \cup \Sigma^{-1})^\ast\) to be the transitive closure of the
			binary relation
			\[
				\{(uv, \ v(u\phi)) \mid u, v \in \Sigma^\ast \text{ freely reduced}\}
				\cup \{(v(u\phi)), \ uv) \mid u, v \in \Sigma^\ast \text{ freely reduced}\},
			\]
      where \(uv\) is the freely reduced word obtained by concatenating and
      freely reducing, and \(v (u\phi)\) is the freely reduced word obtained by
      applying \(\phi\), concatenating and freely reducing.

      As \(\sim_\phi\) is defined on the set of freely reduced words, we can
      therefore define \(\sim_\phi\) on \(F\) as well. We say \(g\) is a
      \textit{\(\phi\)-cyclic permutation} if \(g \sim_\phi h\). We say \(g\) is
      a \textit{\(\phi\)-twisted conjugate} of \(h\) if there exists \(x \in F\)
      such that \(x^{-1} g (x \phi) = h\). A \textit{\(\phi\)-twisted conjugacy
      class} is an equivalence class of the equivalence relation of being
      \(\phi\)-twisted conjugate.
		\end{dfn}

    Virtually inner automorphisms and twisted conjugacy classes have been used
    before to study variants of the conjugacy problem in virtually free
    groups; for example in \cite{LadraSilva} to study the generalised
    conjugacy problem.

    We start by studying the set of \(\phi\)-cyclic permutations of a given
    element.

    \begin{lem}
			\label{lem:virt-inner-twisted-cyclic-conjugates}
      Let \(F\) be a finite rank free group and \(\phi \in \Aut(F)\) be
      virtually inner. Let \(g \in F\). Then there is a finite set
      \(X \subseteq F\) and an element \(h \in F\) such that
      the set of \(\phi\)-cyclic permutations of \(g\) is
      \[
        \{h^{-n} p h^{n} \mid n \in \mathbb{Z}, p \in X\}.
      \]
		\end{lem}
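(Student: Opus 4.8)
The plan is to set $h = c$, where $c \in F$ is chosen so that $\phi^k = \gamma_c$ is the inner automorphism $\gamma_c \colon x \mapsto c^{-1} x c$ guaranteed by virtual innerness (with $k \geq 1$, and $c = 1$ permitted when $\phi$ has finite order). Write $C$ for the set of $\phi$-cyclic permutations of $g$, and $O_w = \{c^{-n} w c^n : n \in \mathbb{Z}\}$ for the orbit of $w$ under conjugation by $c$. Then the lemma reduces to the single assertion that $C$ is a union of \emph{finitely many} such orbits, for one then takes $X$ to be a finite set of orbit representatives.

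I would first carry out the easy structural reductions. The key observation is the whole-word move: taking $u = w$ and $v = \varepsilon$ in the defining relation gives $w \sim_\phi w\phi$ for every $w$, so $C$ is invariant under $\phi$ and, by transitivity together with the symmetry of $\sim_\phi$, under $\phi^{-1}$. Iterating $k$ times yields $w \sim_\phi w\phi^k = c^{-1} w c$, so $O_w \subseteq C$ for each $w \in C$; hence $C$ is a union of orbits $O_w$. Moreover each $\langle \phi \rangle$-orbit decomposes as $\bigcup_{i=0}^{k-1} O_{w\phi^i}$, since $\phi^{qk+i}$ sends $w$ to $c^{-q}(w\phi^i)c^q$. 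Thus it suffices to prove that $C$ consists of finitely many orbits $O_w$, after which $X$ is taken to be the representatives together with their twists $w\phi^i$ for $0 \le i < k$.

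The crux is this finiteness. I would record that every single twisted cyclic move has the form $w = uv \mapsto v(u\phi) = u^{-1} w (u\phi)$, where $uv$ is a factorisation with no cancellation, so that $\|w\| = \|u\| + \|v\|$, and that $\phi^{\pm 1}$ distort length by at most a factor $L = \max_{s}\max(\|s\phi\|, \|s\phi^{-1}\|)$. Were $\phi$ the identity these moves could never increase length, so the only mechanism producing long elements of $C$ is the repeated application of $\phi$, which by the previous paragraph accumulates precisely as conjugation by powers of $c$. I would make this precise by a peeling argument: show there is a bound $B = B(g,\phi)$ such that any $w \in C$ with $\|w\| > B$ can be written $w = \overline{c w' c^{-1}}$ (or $\overline{c^{-1} w' c}$), with $w' \in C$, $\|w'\| < \|w\|$, and the overline denoting free reduction. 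Granting this, repeatedly peeling drives any element of $C$ down to one of length at most $B$ inside its own orbit $O_w$; as there are only finitely many words of length at most $B$, the set $\{w \in C : \|w\| \le B\}$ is finite and meets every orbit, so $C$ is a finite union of orbits and the lemma follows with $h = c$.

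The main obstacle is exactly this peeling step: showing that a long element of $C$ genuinely begins and ends with powers of $c$, so that conjugating by $c^{\mp 1}$ shortens it while keeping it in $C$. The delicacy is that $\sim_\phi$ permits re-splitting after each free reduction, so lengths cannot be controlled monotonically along a derivation; I would instead induct on the length of a shortest derivation $g \leadsto w$, isolate the last length-increasing move, and use virtual innerness to recognise the accumulated $\gamma_c^{\pm}$ at the two ends of the reduced word representing $w$. An alternative route that may streamline the bookkeeping is to pass to the mapping torus $G = F \rtimes_\phi \mathbb{Z} = \langle F, t \mid t^{-1} x t = x\phi \rangle$: a genuine twisted cyclic move on $g$ is the $F$-part of a genuine cyclic rotation of the word $g t^{-1}$ in $G$, and since $\phi^k = \gamma_c$ the element $z = t^k c^{-1}$ commutes with $F$, so $G$ contains $F \times \langle z \rangle$ with finite index $k$. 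This makes the appearance of the single conjugator $c$ transparent and reduces the length control to the finite set of cyclically reduced rotations of $g t^{-1}$. Either way, once the peeling bound is established the finiteness of the orbit count, and hence the lemma, is immediate.
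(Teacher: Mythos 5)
Your proposal correctly identifies the shape of the answer (take $h=c$ with $\phi^k=\gamma_c$, and $X$ a finite set of orbit representatives), and your ``whole-word move'' observation --- that $w\sim_\phi w\phi$ by taking $v=\varepsilon$, hence the set $C$ of $\phi$-cyclic permutations is closed under $\phi^{\pm1}$ and therefore under conjugation by $c$ --- cleanly gives the inclusion $\{h^{-n}ph^n\}\subseteq C$. But the reverse inclusion, i.e.\ the finiteness of the number of $\langle c\rangle$-conjugation orbits in $C$, is the entire content of the lemma, and you have not proved it. Your ``peeling'' claim (every $w\in C$ with $\|w\|>B$ equals $\overline{c\,w'\,c^{-1}}$ for some shorter $w'\in C$) is stated, used, and then explicitly deferred: you say you \emph{would} prove it by inducting on derivation length and isolating the last length-increasing move, and you yourself flag the obstruction --- that $\sim_\phi$ allows re-splitting after each free reduction, so no monotone length control is available along a derivation. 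Neither that induction nor the mapping-torus alternative is carried out, and it is not evident that either goes through without further ideas (for instance, an intermediate word in a derivation need not yet ``begin and end with powers of $c$'', and the last length-increasing move need not be the last move). So there is a genuine gap precisely at the step where all the difficulty lives.

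For comparison, the paper avoids peeling altogether by writing down an explicit normal form for an \emph{arbitrary} iterate of the twisted cyclic move: every $\phi$-cyclic permutation of $g=x_1\cdots x_m$ has the form $v\,(x_{i+1}\phi^{n})\cdots(x_m\phi^{n})(x_1\phi^{n+1})\cdots(x_{i-1}\phi^{n+1})(u\phi)$ with $uv=(x_i)\phi^{n}$; writing $\phi^{n}=\phi^{n\bmod k}\psi^{\lfloor n/k\rfloor}$ and pulling the inner part $\psi^{\lfloor n/k\rfloor}$ out as $h^{\mp r}(\cdot)h^{\pm r}$ leaves a core depending only on the finitely many parameters $i$, $u'$, $v'$ and $n\bmod k$. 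That parametrisation is exactly the quantitative substitute for your missing bound $B$. If you want to salvage your route, you would need to actually establish the normal form (or an equivalent length bound on orbit representatives); as written, the argument reduces the lemma to an unproven claim of essentially the same difficulty.
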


		\begin{proof}
      Since \(\phi\) is virtually inner, there exists \(k \in \mathbb{Z}_{>
      0}\) such that \(\phi^k = \psi\) for some inner automorphism \(\psi\).
      Then every \(\phi\)-cyclic
      permutation of \(g = x_1 \cdots x_m\), where each \(x_i\) is a
      generator of \(F\), has the form
      \[
      v (x_{i + 1} \phi^n) (x_{i + 2} \phi^n) \cdots (x_m \phi^n)
      (x_1 \phi^{n + 1}) \cdots (x_{i - 1} \phi^{n + 1}) (u\phi),
      \]
      for some \(n \in \mathbb{Z}\), where \(u, v \in \Sigma^\ast\) are such
      that \(uv = (x_i)\phi^n\). In addition, as \(\phi^k = \psi\), we can
      rewrite this as
      \[
      v (x_{i + 1} \phi^{(n \modns k)} \psi^{\left \lfloor \frac{n}{k} \right \rfloor})
      \cdots (x_m \phi^{(n \modns k)} \psi^{\left \lfloor \frac{n}{k} \right \rfloor})
      (x_1 \phi^{(n \modns k) + 1} \psi^{\left \lfloor \frac{n}{k} \right \rfloor}) \cdots (x_{i - 1} \phi^{(n \modns k) + 1} \psi^{\left \lfloor \frac{n}{k} \right \rfloor}) (u\phi),
      \]
      %
      %
      where \(uv = (x_i)\phi^{(n \modns k)} \psi^{\left \lfloor \frac{n}{k} \right \rfloor}\). Since \(\psi\) is inner, it is defined by conjugation by some element \(h
      \in F\). Let \(r = \left \lfloor \frac{n}{k} \right \rfloor\). Then the above expression becomes
      \begin{equation}
      \label{eqn:virt-inner}
      v h^{-r}(x_{i + 1}) \phi^{(n \modns k)}
      \cdots (x_m \phi^{(n \modns k)})
      (x_1 \phi^{(n \modns k) + 1}) \cdots (x_{i - 1} \phi^{(n \modns k) + 1}) h^r
      (u\phi),
      \end{equation}
      where \(uv = h^{-r}(x_i)\phi^{(n \modns k)}h^r\). We have
      \(u = h^{-r} u' h^r\) and \(v = h^{-r} v' h^r\), where \(u'\) and
      \(v'\) are freely reduced, and \(h^{-r}u'v' h^r
      = uv =  h^{-r} (x_i)\phi^{n \modns k} h^r\), and so \(u'v' =
      (x_i) \phi^{n \modns k}\). Since \(\psi\) is a power of \(\phi\), they
      commute, and so \(u\phi = (h^{-r} u' h^r )\phi = h^{-r} (u' \phi) h^r\).
      We can therefore rewrite \eqref{eqn:virt-inner} as
      \[
      h^{-r} v'(x_{i + 1}) \phi^{(n \modns k)}
      \cdots (x_m \phi^{(n \modns k)})
      (x_1 \phi^{(n \modns k) + 1}) \cdots (x_{i - 1} \phi^{(n \modns k) + 1})
      (u'\phi) h^r,
      \]
      Thus every \(\phi\)-cyclic permutation of \(g\) is an \(h^{r}\)-conjugate of an expression of the form
      \[
        v'(x_{i + 1}) \phi^{(n \modns k)}
        \cdots (x_m \phi^{(n \modns k)})
        (x_1 \phi^{(n \modns k) + 1}) \cdots (x_{i - 1} \phi^{(n \modns k) + 1})
        (u'\phi).
      \]
      Since there are finitely many possibilities for \(i\), \(u'\), \(v'\) and
      \(n \modns k\), these expressions define finitely many elements of \(F\).
      Moreover, all such expressions represent \(\phi\)-cyclic permutations of
      \(g\), and so the set of \(\phi\)-cyclic permutations of \(g\) is in the
      stated form.
		\end{proof}

    Before we can show that the set of \(\phi\)-cyclic permutations of a given
    element forms a recognisably context-free set, we need the following lemma.

    \begin{lem}[{\cite[Lemma 4.6]{one_counter_groups}}]
      \label{lem:free-gp-freely-red-rcf}
      Let \(F\) be a free group with free basis \(\Sigma\) and natural
      homomorphism \(\pi \colon (\Sigma \cup \Sigma^{-1})^\ast \to F\). Let
      \(E \subseteq F\) be such that there is a context-free language \(L
      \subseteq E \pi^{-1}\) that contains the freely reduced forms of every
      element of \(E\). Then \(E\) is recognisably context-free.
    \end{lem}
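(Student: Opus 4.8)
The plan is to write \(E\pi^{-1}\) as the language obtained from \(L\) by inserting words representing the identity into every gap, and then to realise that operation using the substitution lemma. Write \(W = \{w \in (\Sigma \cup \Sigma^{-1})^\ast \mid w\pi = 1\}\) for the word problem of \(F\), which is context-free: it is the generalisation to arbitrary rank of the Dyck-type language of Example~\ref{ex:wp-Z-CF} (equivalently, \(F\) is virtually free, so its word problem is context-free). The central claim is the identity
\[
  E\pi^{-1} = \{w_0 a_1 w_1 a_2 \cdots a_n w_n \mid a_1 \cdots a_n \in L,\ w_0, \ldots, w_n \in W\},
\]
where each \(a_i \in \Sigma \cup \Sigma^{-1}\) is a literal letter.

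The inclusion \(\supseteq\) is immediate: inserting identity-representing words into a word \(v \in L \subseteq E\pi^{-1}\) does not change the group element represented, which therefore lies in \(E\). For \(\subseteq\), I would take \(w\) with \(w\pi \in E\) and let \(u = a_1 \cdots a_n\) be the freely reduced form of \(w\pi\); by hypothesis \(u \in L\). The key point, and the step I expect to be the main obstacle, is to show that \(w\) itself factors as \(w_0 a_1 w_1 \cdots a_n w_n\) with each \(w_i \in W\). This rests on the non-crossing structure of cancellation in a free group: the letters of \(w\) that survive free reduction occur in order and spell out \(u\), and since two letters cancel only when the entire subword strictly between them also cancels, no cancelling pair can straddle a surviving letter. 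Hence the blocks of \(w\) lying before, between, and after the surviving letters cancel within themselves, so each such block represents the identity, which yields the required factorisation.

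Finally I would show the right-hand side is context-free using substitution. Let \(\hat\Sigma\) be a disjoint copy of \(\Sigma \cup \Sigma^{-1}\) and let \(\theta\) be the renaming homomorphism \(a \mapsto \hat a\); then \(L\theta\) is context-free by Lemma~\ref{lem:cf-closure}. Substituting, one letter at a time, each \(\hat a\) by the context-free language \(Wa\) produces a context-free language by Lemma~\ref{lem:cf-subs}; here each substitution introduces only letters of \(\Sigma \cup \Sigma^{-1}\) and never the remaining hatted symbols, so the substitutions may be carried out successively over the finite alphabet \(\hat\Sigma\). Call the resulting language \(M\). Then \(E\pi^{-1} = MW\), which is context-free by closure under concatenation (Lemma~\ref{lem:cf-closure}); the trailing factor \(W\) supplies the final block \(w_n\) and also covers the case \(1 \in E\), where \(u = \varepsilon\). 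Therefore \(E\pi^{-1}\) is context-free and \(E\) is recognisably context-free, as required.
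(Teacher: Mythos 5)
Your proof is correct. Note that the paper itself does not prove this lemma --- it is quoted from Herbst \cite[Lemma 4.6]{one_counter_groups} and used as a black box --- so there is no in-paper argument to compare against; but your route is the standard one and all the pieces check out. The two points that carry the weight are both handled soundly: (i) the factorisation \(w \equiv w_0 a_1 w_1 \cdots a_n w_n\) with each \(w_i\) representing the identity, which you justify via the non-crossing structure of cancellation (if you want this airtight, a cleaner phrasing is an induction on the number of free reductions: if \(w \equiv p x x^{-1} q\) and \(pq\) has such a factorisation, the inserted pair \(xx^{-1}\) lands inside or adjacent to one of the blocks \(w_i\), possibly an empty one, and can be absorbed into it); and (ii) the language-theoretic assembly, where passing to a disjoint hatted copy of the alphabet before substituting \(\hat a \mapsto Wa\) is exactly the right device to make the single-letter substitution lemma (Lemma~\ref{lem:cf-subs}) iterate cleanly, since \(Wa\) reintroduces no hatted letters. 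The identity \(E\pi^{-1} = MW\) then follows from \(L \subseteq E\pi^{-1}\) for the easy inclusion and from the hypothesis that \(L\) contains all reduced forms of elements of \(E\) for the hard one, and the trailing \(W\) correctly handles both the final block and the case \(u = \varepsilon\). One could also prove the lemma by building a single pushdown automaton that nondeterministically guesses which letters of the input survive reduction, but your grammar-free closure-property argument is shorter and uses only lemmas already stated in the paper.
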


    We use Lemma~\ref{lem:virt-inner-twisted-cyclic-conjugates} and
    Lemma~\ref{lem:free-gp-freely-red-rcf} to show the following.

    \begin{lem}
      \label{lem:virt-inner-cyclic-perm-rcf}
      Let \(F\) be a finite rank free group and \(\phi \in \Aut(F)\) be
      virtually inner. Let \(\Sigma\) be a free basis for \(F\) and
      let \(w \in (\Sigma \cup \Sigma^{-1})^\ast\) be freely reduced. Let
      \(E \subseteq F\) denote the set of all elements that can be written
      as \(\phi\)-cyclic permutations of \(w\). Then \(E\) is recognisably
      context-free.
    \end{lem}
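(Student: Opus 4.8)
The plan is to combine the structural description of \cref{lem:virt-inner-twisted-cyclic-conjugates} with the freely-reduced-forms criterion of \cref{lem:free-gp-freely-red-rcf}. Writing \(g = w\pi\), \cref{lem:virt-inner-twisted-cyclic-conjugates} gives \(E = \{h^{-n} p h^n \mid n \in \mathbb{Z}, \ p \in X\}\) for a finite set \(X \subseteq F\) and some \(h \in F\). By \cref{lem:free-gp-freely-red-rcf} it suffices to produce a context-free language \(L \subseteq E\pi^{-1}\) containing the (unique) freely reduced form of every element of \(E\); I will take \(L\) to be exactly the set of freely reduced forms of the elements \(h^{-n} p h^n\), which automatically lies in \(E\pi^{-1}\). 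Since context-free languages are closed under finite union (\cref{lem:cf-closure}) and \(X\) is finite, it is enough to show, for each fixed \(p \in X\), that the set \(L_p\) of freely reduced forms of \(\{h^{-n} p h^n \mid n \in \mathbb{Z}\}\) is context-free.

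If \(h =_F 1\), or more generally whenever \(\{h^{-n} p h^n \mid n \in \mathbb{Z}\}\) is finite, the language \(L_p\) is finite and hence context-free, so assume otherwise. Write \(h =_F t^{-1} c t\) as a freely reduced word with \(c\) cyclically reduced and nonempty; then \(h^{n} =_F t^{-1} c^{n} t\) is freely reduced for every \(n \in \mathbb{Z}\), and
\[
  h^{-n} p h^n =_F t^{-1} (c^{-1})^{n} w_q\, c^{n} t,
\]
where \(w_q\) is the freely reduced form of \(t p t^{-1}\). The key technical step is a combinatorics-of-words analysis of this expression: because \(c\) is cyclically reduced, no cancellation occurs within the blocks \((c^{-1})^n\) or \(c^n\), nor at the junctions \(t^{-1} \mid (c^{-1})^n\) and \(c^n \mid t\) (these follow from \(h\) and \(h^{-1}\) being freely reduced), so all cancellation is confined to the two junctions flanking \(w_q\). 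This cancellation cascades through at most \(|w_q|\) letters, hence eats only a bounded number of copies of \(c^{\pm 1}\); for \(|n|\) large the two junctions are separated by surviving powers of \(c^{\pm 1}\) and cannot interact. I expect this stabilisation argument to be the main obstacle, the delicate point being to rule out (or rather, to isolate as the finite case above) the possibility that the middle collapses far enough for the left \((c^{-1})\)-block to meet and annihilate the right \(c\)-block.

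Granting stabilisation, there are constants \(a, b \geq 0\), a fixed freely reduced word \(R\) (all independent of \(n\)), and a threshold \(n_0\) such that for all \(n \geq n_0\) the freely reduced form of \(h^{-n} p h^n\) is exactly \(t^{-1}(c^{-1})^{\,n - a} R\, c^{\,n - b} t\), with an analogous description for \(n \leq -n_0\). Since \(n - a\) and \(n - b\) differ by the constant \(a - b\), this family is the classical matched-exponent pattern: it is generated by a context-free grammar with a single nonterminal \(\mathbf{N}\) and productions \(\mathbf{N} \to c^{-1} \mathbf{N} c\) together with a base production emitting the stabilised core, after which the fixed terminal strings \(t^{-1}\), \(R\), \(t\) (and the constant surplus power of \(c^{\pm 1}\)) are prepended and appended. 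Taking the union of this grammar's language, its \(n \leq -n_0\) counterpart, and the finite set of freely reduced forms for \(|n| < n_0\) yields a context-free \(L_p\) (\cref{lem:cf-closure}). Forming \(L = \bigcup_{p \in X} L_p\) and invoking \cref{lem:free-gp-freely-red-rcf} then shows that \(E\) is recognisably context-free.
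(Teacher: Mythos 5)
Your proposal is correct and takes essentially the same route as the paper: reduce via \cref{lem:virt-inner-twisted-cyclic-conjugates} and finite unions to a single family \(\{h^{-n} p h^{n} \mid n \in \mathbb{Z}\}\), invoke \cref{lem:free-gp-freely-red-rcf}, and generate the resulting matched-exponent pattern with a grammar built on \(\mathbf{N} \to c^{-1}\mathbf{N}c\). The one divergence is that you insist \(L\) be exactly the set of freely reduced forms and therefore shoulder the stabilisation analysis explicitly, whereas the paper exploits the slack in \cref{lem:free-gp-freely-red-rcf} by taking \(L\) to be the unreduced concatenations \(u^{-n} v u^{n}\) over a finite set \(Y\) of middle words --- the same stabilisation fact, but absorbed into the (unproved) assertion that such a finite \(Y\) exists.
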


    \begin{proof}
      Lemma~\ref{lem:virt-inner-twisted-cyclic-conjugates} tells us that there
      is a finite set \(X \subseteq F\) and an element \(h \in F\) such that
      \(E = \{h^{-n} p h^n \mid n \in \mathbb{Z}, p \in X\}\). Since finite
      unions of context-free languages are context-free
      (Lemma~\ref{lem:cf-closure}), it suffices to show that if \(p \in F\),
      then \(E_p = \{h^{-n} p h^n \mid n \in \mathbb{Z}\}\) is recognisably
      context-free. Using Lemma~\ref{lem:free-gp-freely-red-rcf}, it suffices
      to show that there exists a context-free language
      \(L \subseteq E_p \pi^{-1}\) that contains all freely reduced words
      in \(E_p \pi^{-1}\). We construct a context-free grammar for \(L\).

      Let \(u\) denote the freely reduced form \(h\).  Note that there is a
      finite set \(Y\) of words such that the freely reduced words in \(E_p
      \pi^{-1}\) are the freely reduced words in the set
      \[
        \{u^{-n} v u^n \mid n\in \mathbb{Z}, v \in Y\}.
      \]
      Thus if we take \(L = \{u^{-n} v u^n \mid n \in \mathbb{Z}, u \in Y\}\),
      then \(L \subseteq E_p \pi^{-1}\) and \(L\) contains all freely reduced
      words in \(E_p \pi^{-1}\), and so it suffices to show that \(L\) is
      context-free.  Again, since finite unions of context-free languages are
      context-free (Lemma~\ref{lem:cf-closure}), it suffices to show that for
      any freely reduced \(v \in Y\), \(L_v = \{u^{-n} v u^n \mid
      n \in \mathbb{Z}\}\) is context-free.

      Fix \(v \in Y\). We define a context-free grammar for \(L_v\). Our set of
      non-terminals will be \(\{\mathbf S, \mathbf T, \mathbf U\}\), with
      \(\mathbf{S}\) the start symbol. The set of productions \(\mathcal P\) is
      defined by
      \[
        \mathcal P = \{\mathbf{S} \to \mathbf T, \mathbf S \to \mathbf U,
          \mathbf T \to u^{-1} \mathbf T u, \mathbf T \to v,
        \mathbf U \to u \mathbf U u^{-1}, \mathbf U \to v\}.
      \]
      Any derivation using these productions and starting at \(\mathbf S\),
      either goes straight to \(\mathbf T\) or to \(\mathbf U\). When in
      \(\mathbf T\), we can add \(u^{-1}\) at the beginning and \(u\) at the
      end, or replace the \(\mathbf T\) with \(v\). Thus the set of words
      derived with first production \(\mathbf S \to \mathbf T\) is \(\{u^{-n} v
        u^n \mid n \in \mathbb{Z}_{\geq 0}\}\). By symmetry, those derived
        through \(\mathbf U\) are \(\{u^{-n} v u^n \mid n \in \mathbb{Z}_{\leq
        0}\}\), and so the grammar \((\Sigma \cup \Sigma^{-1}, \{\mathbf S,
        \mathbf T, \mathbf U\},
        \mathcal P, \mathbf{S})\) generates \(L_v\), as required.
    \end{proof}

    We now use the fact that the set of \(\phi\)-cyclic permutations of a given
    element is recognisably context-free to prove that the \(\phi\)-twisted
    conjugacy classes in free groups are recognisably context-free in finite
    extensions corresponding to the automorphism \(\phi\).

    The conjugacy problem in free groups can be solved using two facts: every
    conjugacy class has only finitely many cyclically reduced words, and every
    freely reduced word representing an element of a conjugacy class can be
    expressed in the form \(u x u^{-1}\), with \(u\) a freely reduced word and
    \(x\) a cyclically reduced word. The first fact we replace with
    Lemma~\ref{lem:elt-hash-aut}, and the following result is an analogue of
    the latter for \(\phi\)-twisted conjugates.

    \begin{proposition}
      \label{prop:free-twisted-conj-red-form}
      Let \(F\) be a finite rank free group with basis \(\Sigma\), and let
      \(\phi \in \Aut(F)\). Let \(C\) be a \(\phi\)-twisted
      conjugacy class of \(F\). Let \(X\) be the set of freely reduced words
      representing the minimal length elements in \(C\) and their
      \(\phi\)-cyclic permutations. Let \(C_\text{red}\) be the set of freely
      reduced words over \(\Sigma \cup \Sigma^{-1}\) representing elements of
      \(C\). Then
      \begin{equation}
        C_\text{red} \subseteq \{v_1 u v_2 \mid u \in X, v_1, v_2 \text{ are freely
          reduced representatives for } g^{-1}, g\phi, \text{ for some } g
        \in F\}.
         \label{eqn:conj_red}
      \end{equation}
    \end{proposition}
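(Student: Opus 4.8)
The plan is to characterise the freely reduced representatives of elements of a $\phi$-twisted conjugacy class $C$ in terms of a ``cyclically reduced core'' and a conjugating prefix, mirroring the classical free-group argument. Let me first fix what $X$ is: it consists of the freely reduced words representing the minimal-length elements of $C$ together with all their $\phi$-cyclic permutations. The goal is to show that every freely reduced $w \in C_\mathrm{red}$ can be factored as $w \equiv v_1 u v_2$, where $u \in X$ is (the freely reduced form of) a $\phi$-cyclic permutation of a minimal element, and $v_1, v_2$ are freely reduced representatives of $g^{-1}$ and $g\phi$ respectively, for some $g \in F$.

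**First** I would set up the induction on word length. Take $w \in C_\mathrm{red}$ freely reduced representing some $c \in C$. If $c$ already has minimal length in $C$, then $w$ itself lies in $X$ (taking $g = 1$, so $v_1 = v_2 \equiv \varepsilon$), and we are done. Otherwise $c$ is not of minimal length, and the key claim is that $w$ is not $\phi$-cyclically reduced: there is a cancellation between the first and last letters after twisting. Concretely, writing $w \equiv a w' b$ with $a, b$ single letters, the definition of $\sim_\phi$ lets us pass from $w = a w'b$ (via the relation $uv \sim_\phi v(u\phi)$ with a one-letter $u$) to a $\phi$-cyclic permutation of strictly smaller length precisely when the twisted conjugation by a generator reduces length. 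The obstacle is verifying that a non-minimal element always admits such a length-reducing $\phi$-cyclic move; this is where the twist $\phi$ complicates the classical picture, because one cyclically permutes by $a$ on the left but reattaches $a\phi$ on the right, so the ``cancellation'' condition must be stated in terms of $a\phi$ and $b$.

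**The main step** is then to peel off one conjugating letter at a time. Suppose $w \equiv a w''$ is freely reduced and the $\phi$-cyclic permutation $w'' (a\phi)$ (freely reduced) is strictly shorter — here I am using the relation $(a)(w'') \sim_\phi (w'')((a)\phi)$ from the definition of $\sim_\phi$ with $u \equiv a$, $v \equiv w''$. By the inductive hypothesis applied to this shorter element (which lies in the same $\phi$-twisted conjugacy class, since $\phi$-cyclic permutation is a special case of $\phi$-twisted conjugation, taking $x = a$), its freely reduced form factors as $v_1' u v_2'$ with $u \in X$ and $v_1', v_2'$ representing $(g')^{-1}, g'\phi$ for some $g' \in F$. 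I then reassemble $w$ by conjugating back by $a$: setting $g = a g'$ (or $g = g' a$, depending on the side), the prefix becomes a representative of $g^{-1}$ and the suffix a representative of $g\phi$, using that $(ag')\phi = (a\phi)(g'\phi)$. Tracking that the concatenations reduce correctly to recover exactly the freely reduced word $w$ is the bookkeeping heart of the argument.

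**The hard part** will be the base case and the cancellation analysis: proving that any freely reduced representative of a non-minimal element genuinely admits a length-decreasing $\phi$-cyclic permutation, and that once we reach minimal length the resulting word is a $\phi$-cyclic permutation of a minimal element, hence in $X$. This requires care because, unlike ordinary conjugacy, the set of minimal-length elements need not be closed under all the twisted moves in an obvious way, and one must confirm that the minimal-length words reachable by length-reducing $\phi$-cyclic permutations are exactly those collected in $X$. Once the induction is established the inclusion in \eqref{eqn:conj_red} follows, and I would only need to double-check that the sides $v_1, v_2$ correspond to $g^{-1}$ and $g\phi$ consistently throughout, which is forced by the asymmetric ``$u$ left, $u\phi$ right'' shape of the defining relation for $\sim_\phi$.
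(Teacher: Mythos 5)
There is a genuine gap: the two steps you yourself flag as ``the obstacle'' and ``the bookkeeping heart'' are precisely the content of the proposition, and neither is established. Your induction rests on the claim that every freely reduced representative of a non-minimal element of \(C\) admits a \emph{length-decreasing single-letter} \(\phi\)-cyclic move \(aw'' \mapsto w''(a\phi)\). This is the twisted analogue of ``a non-cyclically-reduced word can be cyclically reduced,'' but unlike the classical case it is far from automatic: \(a\phi\) may be a long word, so the move decreases length only if more than half of \(a\phi\) cancels into \(w''\), and you give no argument that some letter of \(w\) always has this property when \(w\) is not minimal. Without this, the induction has no way to start descending. Separately, even granting a length-reducing move and the inductive factorisation \(w''(a\phi) \equiv v_1' u v_2'\), reassembling gives only the \emph{group element} identity \(w =_F (a v_1')\, u\, (v_2'(a\phi)^{-1})\); the proposition asserts a factorisation \emph{as words} with no free reduction between \(v_1\), \(u\) and \(v_2\) (the paper's proof is explicit that equality in \eqref{eqn:conj_red} is literal concatenation), and \(av_1'\) or \(v_2'(a\phi)^{-1}\) may well fail to be freely reduced or may cancel into \(u\). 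You acknowledge this but do not resolve it.

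The paper's proof sidesteps the first problem entirely and is organised around the second. It starts from the known fact that \(w =_F g^{-1} u (g\phi)\) for some minimal-length \(u\) and some \(g \in F\), writes \(w\) as the free reduction of \(v_1 u v_2\), and analyses the cancellation pattern: writing \(u \equiv x u' y\), \(v_1 \equiv v_1' x^{-1}\), \(v_2 \equiv y^{-1} v_2'\), either \(w\) still has the form \(z^{-1} p (z\phi)\) as a reduced concatenation with \(z \neq \varepsilon\) (in which case one inducts on \(|w|\) via the strictly shorter \(p \in C_\text{red}\)), or one of \(v_1'\), \(v_2'\) is forced to be empty, in which case \(w\) is exhibited directly as a \(\phi\)-cyclic permutation of \(u\) and hence lies in \(X\) — this is exactly why \(X\) is defined to contain all \(\phi\)-cyclic permutations of minimal elements, not just the minimal elements themselves. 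If you want to salvage your route, you would need to prove your descent claim (or weaken it to ``peel off the whole conjugating part at once,'' at which point you have essentially reconstructed the paper's argument) and carry out the cancellation analysis at the level of words rather than group elements.
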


    \begin{proof}
      Note that in the expression \eqref{eqn:conj_red}, we mean equality as
      words; there is no free reduction involved. Let \(w \in C_\text{red}\).
      Then \(w\) is \(\phi\)-twisted conjugate to some element \(h \in F\) such
      that the freely reduced representative for \(h\) lies in \(X\). Thus \(w\)
      can be obtained from \(v_1 u v_2\) by freely reducing, where \(u \in X\)
      is of minimal length, and \(v_1\) and \(v_2\) are the freely reduced
      representatives for \(g^{-1}\) and \(g \phi\), respectively for some \(g
      \in F\). If there is no free reduction between \(v_1\) and \(u\), and
      \(u\) and \(v_2\), we will have shown \eqref{eqn:conj_red}. So we will
      modify the expression to show that such a form will always exist.

      We have \(u \equiv x u' y\), \(v_1 \equiv v_1' x^{-1}\), \(v_2 \equiv y^{-1}
      v_2'\) and \(w \equiv v_1' u v_2'\), for some freely reduced \(x, \ y, \ u', \ v_1', \ v_2'
      \in (\Sigma \cup \Sigma^{-1})^\ast\). If the word \(w
      \equiv v_1' u v_2'\) is of the form \(z^{-1} p (z \phi)\), for freely
      reduced words \(p, \ z, \ z \phi\), and \(z \neq \varepsilon\) then it
      suffices to show that \(p\) lies in the form stated in
      \eqref{eqn:conj_red}. Since \(|p| < |w|\) and \(p \in C_\text{red}\), we
      can use induction to conclude that \(p\) is of the form stated in
      \eqref{eqn:conj_red}.

      Thus we can assume that \(w\) is not of the form \(z^{-1} p (z \phi)\).
      We have that (after freely reducing \(v_1^{-1} \phi\)), that \(v_1^{-1}
      \phi \equiv v_2\), and so \((v_1')^{-1} \phi\) (after being freely
      reduced) and \(v_2'\) are both suffixes of \(v_2\). However, since we are
      assuming that \(w\) is not of the form \(z^{-1} p (z \phi)\), for freely
      reduced words \(p, \ z, \ z \phi\), we have that \(v_2'\) and (the freely
      reduced form of) \((v_1')^{-1} \phi\) must not have a common suffix.
      Since they are both suffixes of the same word, we conclude that one must
      be empty. If \((v_1')^{-1} \phi\) is empty, then \(v_1'\) is empty, since
      \(\phi\) is an automorphism of \(F\). We can therefore split into the cases
      when \(v_1'\) or \(v_2'\) are empty.

			Case 1: \(v_1' = \varepsilon\). \\
			Then \(w \equiv u' v_2'\), and \(v_1 u v_2 \equiv x^{-1} x u' y y^{-1} v_2'\).
			Since \(x \phi \equiv v_1^{-1} \phi =_F v_2 \equiv y^{-1} v_2'\), we can
			\(\phi\)-cyclically permute \(u \equiv x u' y\) to \(u' y y^{-1} v_2'
			=_F u' v_2' \equiv w\). Thus \(w \pi\) is a \(\phi\)-cyclic permutation of
			\(u \pi\), and \(u \in C_\text{red}\), as required.

			Case 2: \(v_2' = \varepsilon\). \\
			Then \(w \equiv v_1' u'\) and \(v_1 u v_2 \equiv v_1' x^{-1} x u' y
			y^{-1}\). Since \(v_1^{-1} \phi =_F v_2 \equiv y^{-1}\), we have that \(y
			\phi^{-1} =_F v_1 \equiv v_1' x^{-1}\). Thus \(u \equiv x u' y\) is a
			\(\phi\)-cyclic permutation (using the relation `backwards') of \((y
			\phi^{-1}) x u' =_F v_1' x^{-1} x u' =_F v_1' u' \equiv w\). Thus \(w
			\pi\) is a \(\phi\)-cyclic permutation of \(u \pi\), and \(u \in
			C_\text{red}\), as required.
    \end{proof}

    \begin{lem}
	    \label{lem:elt-hash-aut}
      Let \(G\) be a finitely generated virtually free group. Let \(F\) be a
      finite-index normal free subgroup of \(G\), and \(T\) be a right
      transversal for \(F\) in \(G\). Let \(\Sigma\) be a basis for \(F\) and
      let \(\# \notin \Sigma \cup \Sigma^{-1} \cup T\) be a new letter. Let
      \(\phi \in \Aut(F)\). Then the language
      \[
				\{u \# v \mid u, \ v \in (\Sigma \cup \Sigma^{-1} \cup T)^\ast, \
				u^{-1}\phi = v\}
			\]
			is context-free.
		\end{lem}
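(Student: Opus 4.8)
The plan is to build a pushdown automaton $\mathcal{A}$ accepting the language and invoke the equivalence between context-free languages and pushdown automata. First I would record the arithmetic of the coset structure. Since $F$ is normal of finite index, every $g \in G$ has a unique expression $g = ft$ with $f \in F$ and $t \in T$, where I take $1 \in T$ to represent $F$ itself. Reading a word over $\Sigma \cup \Sigma^{-1} \cup T$ from the pair $(1,1)$ one letter at a time, each letter right-multiplies the $F$-part $f$ by a fixed, bounded-length element of $F$ and updates the $T$-part $t$ by a finite-state rule: a letter $a \in \Sigma \cup \Sigma^{-1}$ sends $(f,t)$ to $(f \cdot t a t^{-1}, t)$, where $tat^{-1} \in F$ by normality, while a letter $a \in T$ sends $(f,t)$ to $(f \cdot t a s^{-1}, s)$, where $s \in T$ is the representative of $Fta$. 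In particular, a word over this alphabet represents an element of $F$ precisely when its $T$-part returns to $1$.

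Next I would reformulate the membership condition. Since $\phi$ is a homomorphism, $u^{-1}\phi = (u\phi)^{-1}$, so for $u, v \in F$ the requirement $u^{-1}\phi = v$ is equivalent to $(u\phi) v = 1$ in $F$. Writing $f_u, f_v \in F$ for the elements represented by $u$ and $v$, the language therefore consists of exactly those words $u \# v$ for which $u$ and $v$ both represent elements of $F$ and $(f_u \phi) f_v = 1_F$.

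I would then design $\mathcal{A}$ in two phases, keeping on its stack the freely reduced word (over $\Sigma \cup \Sigma^{-1}$, above the bottom-of-stack symbol $\perp$) of a running element of $F$, with the $T$-part and the phase stored in the finite state. In the first phase, while reading $u$, the automaton tracks $f\phi$ rather than $f$: because $(fm)\phi = (f\phi)(m\phi)$, each right-multiplication of $f$ by a fixed element $m$ becomes a right-multiplication of the stack contents by the fixed element $m\phi$. Right-multiplying a reduced word by a fixed bounded-length element is carried out one generator at a time---on reading generator $b$, pop the top symbol if it is $b^{-1}$ and otherwise push $b$---so the stack always holds the reduced form, and this takes a bounded number of transitions. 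On reading $\#$ the automaton checks that the $T$-part equals $1$ (so $u \in F$) and switches phase; the stack then holds the reduced form of $f_u\phi$. In the second phase, while reading $v$, the automaton right-multiplies the stack contents by the successive fixed elements contributed by the letters of $v$, so that the stack tracks $(f_u\phi) \cdot f_v^{\text{partial}}$. It accepts only when the $T$-part has returned to $1$ (so $v \in F$) and the stack is empty; as in Example~\ref{ex:pda}, emptiness is tested by an $\varepsilon$-transition that pops and replaces $\perp$ into the accept state. Since a reduced word uniquely represents its element of $F$, the stack is empty exactly when $(f_u\phi) f_v = 1_F$, so $\mathcal{A}$ accepts precisely the desired language, which is therefore context-free.

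The routine part is the finite bookkeeping: there are only finitely many pairs $(t,a)$, hence finitely many multipliers $m$ and $m\phi$, all of bounded length, so the transition relation is genuinely finite. The main point requiring care is the invariant that the stack always stores the \emph{reduced} word of the current element of $F$; this is what makes ``stack empty'' equivalent to ``product equals the identity'', and it must be preserved through every single-generator multiplication in both phases, including the cancellations that occur when a fixed multiplier $m\phi$ (or $m$) overlaps with the current top of the stack.
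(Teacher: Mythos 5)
Your construction is correct and is essentially the paper's proof: both build a two\nobreakdash-phase pushdown automaton that keeps the freely reduced form of the \(F\)-part on the stack and the transversal coset in the finite state, switches phase on reading \(\#\), and accepts exactly when the coset has returned to \(F\) and the stack is empty. The only divergence is bookkeeping --- you apply \(\phi\) to the \(F\)-part multipliers \(tat^{-1}\), \(tas^{-1}\) while reading \(u\) and then multiply plainly by \(f_v\), testing \((f_u\phi)f_v=1\), whereas the paper inverts letters while reading \(u\) and applies \(\phi\) letterwise while reading \(v\); your variant handles the transversal letters (on which \(\phi\) is not literally defined) and the order of inversion more transparently.
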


		\begin{proof}
      We construct a pushdown automaton accepting the language \(L_\phi\) stated
      in the lemma. We start by constructing a pushdown automaton \(\mathcal A\)
      that accepts \(g \pi ^ {-1}\) for some \(g \in G\). We first describe the
      automaton \(\mathcal A\).  Our description is based on the proof of
      the Muller-Schupp Theorem in
      \cite{groups_langs_aut}, and generalises Example~\ref{ex:pda}. The idea is
      given a word, we track the normal form \(ht\) of the prefix `read so far',
      by storing the freely reduced form on the stack, and having a state
      \(q_t\) for each \(t \in T\). Let \(t_0\) be the unique element of \(T
      \cap F\). Thus our stack alphabet will be \(\{\perp\} \cup \Sigma \cup
      \Sigma^{-1}\), where\ \(\perp\) is the bottom of stack symbol. We add an
      additional state \(p\) to be our unique accept state, and have a
      transition from \(q_{t_h}\) with stack \(w\) to \(p\), where \(t_g \in T\)
      is the unique coset representative such that \(g = h_g t_g\) for some
      (unique) \(h_g \in F\), and \(w\) is the freely reduced form of \(h_g\).
      Our start state will simply be \(q_{t_0}\).

      It remains to add transitions between the states \(q_t\) for each \(t \in
      T\). We will be using \(\Sigma \cup \Sigma ^ {-1} \cup \{\chi\}\) as our
      stack alphabet, with \(\chi\) the bottom of stack symbol. For each \(a
      \in \Sigma \cup \Sigma ^ {-1} \cup T\) and each \(t \in T\), we have that
      \(ta = ht'\) for some \(h \in F\) and \(t' \in T\). Thus when in state
      \(q_t\) and reading \(a\), we want to transition to state \(q_{t'}\),
      then add the freely reduced form \(\mu_h\) of \(h\) to the stack, and
      then freely reduce. Since we cannot simply freely reduce the stack, we
      have multiple transitions from \(q_t\) to \(q_{t'}\) when reading \(a\);
      one transition for each pair \((\omega, \ x)\), where \(\omega\) is a
      suffix of the freely reduced form of \(h ^ {-1}\) (that is,
      \(\omega^{-1}\) is a prefix of \(\mu_h\)) and \(x \in \Sigma \cup \Sigma
      ^ {-1}\) is such that \(x \omega\) is not a suffix of \(\mu_h^{-1}\).  This
      transition then pops \(x \omega\) from the stack, and pushes \(x\)
      followed by the `remainder' of \(\mu_h\); that is the (unique) freely
      reduced word \(\nu\), such that \(\mu_h \equiv \omega ^{-1} \nu\). To
      deal with the case when the stack is empty (that is, it contains only the
      symbol \(\perp\), we add the transition from \(q_t\) to \(q_{t'}\) when
      reading \(a\) that pops \(\perp\) and pushes \(\perp \mu_h\).

      We now construct a pushdown automaton accepting \(L_\phi\). We start by
      taking two (disjoint) copies of the automaton \(\mathcal{A}\): \(\mathcal
      A_1\) and \(\mathcal A_2\). The automaton \(\mathcal A_1\) will be for \(g
      \pi^{-1}\) for some fixed \(g \in G\) with its accept state considered not
      an accept state (and so the choice of \(g\) does not matter), and
      \(\mathcal A_2\) will be for \(1 \pi^{-1}\). We modify the transitions of
      \(\mathcal A_1\), so that whenever we read \(a \in \Sigma \cup \Sigma ^
      {-1} \cup T\), we instead use the transition for \(a^{-1}\). That is, if
      we were in state \(t\) with stack \(\mu_h\), we would end up in the state
      and stack corresponding to \(\mu_h t a^{-1}\). Similarly, we modify
      \(\mathcal A_2\) so that whenever we read \(a\) we act as if we read \(a
      \phi\). Again, this means that in state \(t\) with stack \(\mu_h\), we
      move to the state-stack pair corresponding to \(\mu_h t (a \phi)\). Our
      start state will be the start state of \(\mathcal A_1\), and our accept
      state will be the accept state of \(\mathcal A_2\).

      We add an additional transition between every state in \(\mathcal A_1\)
      to the corresponding state in \(\mathcal A_2\) that does not alter the
      stack (that is, it pushes and pops \(\varepsilon\) from the stack) when
      reading \(\#\).
		\end{proof}

    We can now show that \(\phi\)-twisted conjugacy classes of free groups are
    recognisably context-free in finite-index overgroups.

    \begin{proposition}
      \label{prop:twisted-conj-cf}
      Let \(F\) be a finite rank free group and \(\phi \in \Aut(F)\) be
      virtually inner. Let \(G\) be such that \(F\) is a finite-index normal
      subgroup of \(G\). Then every \(\phi\)-twisted conjugacy class of \(F\)
      is recognisably context-free in \(G\).
		\end{proposition}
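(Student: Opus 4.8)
The plan is to fix the generating set \(\Delta = \Sigma \cup \Sigma^{-1} \cup T\) of \(G\) (permissible by \cref{gen_set_lem}) and to prove the statement in two phases: first establish that \(C\) is recognisably context-free in \(F\), and then lift this to \(G\) using the finite-index relationship between \(F\) and \(G\). The reason for the split is that the delicate point is controlling the \emph{full} preimage: the class of context-free languages is not closed under intersection, so one cannot simply run the normal-form pushdown automaton \(\mathcal{A}\) from \cref{lem:elt-hash-aut} and then test its final stack for membership in a context-free language. In the free group this obstruction is sidestepped by \cref{lem:free-gp-freely-red-rcf}, which only asks for a context-free language of \emph{freely reduced} representatives contained in the preimage; so for phase one I would reduce the \(F\)-statement to producing such a language.

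For phase one I would use \cref{prop:free-twisted-conj-red-form}, which says that every freely reduced word representing an element of \(C\) has the shape \(v_1 u v_2\), where \(u \in X\) and \(v_1, v_2\) are the freely reduced representatives of \(g^{-1}\) and \(g\phi\) for some \(g \in F\). The outer pairing \((v_1, v_2)\) is exactly the content of \cref{lem:elt-hash-aut}: the language \(\{u \# v \mid u^{-1}\phi = v\}\) over \(\Delta \cup \{\#\}\) is context-free, and intersecting it with the regular language consisting of two freely reduced words over \(\Sigma \cup \Sigma^{-1}\) separated by \(\#\) (\cref{lem:cf-closure}) leaves precisely the context-free set of pairs \(v_1 \# v_2\) in which \(v_1, v_2\) are the freely reduced forms of \(g^{-1}, g\phi\). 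For the middle factor I would observe that \(X\) is a finite union of sets of \(\phi\)-cyclic permutations of minimal-length representatives, so by \cref{lem:virt-inner-cyclic-perm-rcf} and closure under finite union \(X\) is recognisably context-free in \(F\); intersecting its preimage with the regular language of freely reduced words gives a context-free language \(L_X\) whose words are exactly the freely reduced representatives of elements of \(X\).

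I would then combine these by substituting \(L_X\) for the marker \(\#\) (\cref{lem:cf-subs}), obtaining a context-free language \(L = \{v_1 u v_2 \mid v_1 \# v_2 \text{ as above}, \ u \in L_X\}\) over \(\Sigma \cup \Sigma^{-1}\). By construction every word of \(L\) equals \(g^{-1} x (g\phi)\) in \(F\) for some \(g \in F\) and \(x \in X \subseteq C\), hence represents a \(\phi\)-twisted conjugate of \(x\) and lies in \(C\); thus \(L \subseteq C\pi^{-1}\). Conversely, \cref{prop:free-twisted-conj-red-form} guarantees that \(L\) contains every freely reduced representative of every element of \(C\). Applying \cref{lem:free-gp-freely-red-rcf} with \(E = C\) then yields that \(C\) is recognisably context-free in \(F\).

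Finally, for phase two I would lift from \(F\) to \(G\). Since \(F\) is a finite-index free normal subgroup, \(G\) is virtually free, and Carvalho's equivalence — that in a virtually free group a subset of a finitely generated subgroup is recognisably context-free in the subgroup if and only if it is so in the whole group — applied to \(F \leq G\) and \(C \subseteq F\), gives that \(C\) is recognisably context-free in \(G\). I expect the main obstacle to be exactly this passage from \(F\) to \(G\): the full preimage over \(\Delta\) contains words that wander through other cosets and do not split as \(v_1 u v_2\), so the free-group argument (via \cref{lem:free-gp-freely-red-rcf}) does not transfer directly and some structural input relating the two preimages is unavoidable. A secondary point requiring care is the alphabet bookkeeping in the substitution, together with the claim that the minimal-length elements of \(C\) fall into finitely many \(\phi\)-cyclic permutation orbits, which is what makes the finite union defining \(L_X\) legitimate.
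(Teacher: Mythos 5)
Your proposal is correct, and its core construction is the same as the paper's: decompose freely reduced representatives as \(v_1 u v_2\) via \cref{prop:free-twisted-conj-red-form}, handle the outer pair with \cref{lem:elt-hash-aut}, the middle factor with \cref{lem:virt-inner-cyclic-perm-rcf} and finite unions, glue with the substitution closure of \cref{lem:cf-subs}, and finish with \cref{lem:free-gp-freely-red-rcf}. Where you diverge is the passage from \(F\) to \(G\), which you correctly identify as the delicate point. The paper handles it by building everything over the generating set \(\Sigma \cup \Sigma^{-1} \cup T\) of \(G\) from the outset --- this is exactly why \cref{lem:elt-hash-aut} is stated with \(u, v \in (\Sigma \cup \Sigma^{-1} \cup T)^\ast\) rather than over the free basis alone --- so that the resulting language \(L\) already sits inside the \(G\)-preimage and no separate lifting step is needed. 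You instead prove the statement cleanly inside \(F\) and then invoke Carvalho's transfer theorem (that in a virtually free group, a subset of a finitely generated subgroup is recognisably context-free in the subgroup if and only if it is so in the ambient group \cite{Carvalho}); this is a valid deduction since \(G\) is virtually free and \(F\) is finitely generated. Your route has the advantage that \cref{lem:free-gp-freely-red-rcf} is applied strictly in the free-group setting where it is stated, at the cost of importing a nontrivial external result that the paper only mentions in its introduction; the paper's route is self-contained but relies on the reader accepting that the freely-reduced-words criterion still governs the full preimage over the larger alphabet. Your two flagged ``points requiring care'' are both fine: the finiteness of \(L_X\) as a union needs only that a \(\phi\)-twisted conjugacy class has finitely many minimal-length elements, which is immediate since balls in \(F\) are finite.
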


    \begin{proof}
      Let \(\Sigma\) be a free basis for \(F\) and \(\pi \colon (\Sigma \cup
      \Sigma^{-1})\ast \to F\) be the natural map. Let \(C_\phi\) be a
      \(\phi\)-twisted conjugacy class.  Using
      Lemma~\ref{lem:free-gp-freely-red-rcf}, in order to show that \(C_\phi\)
      is recognisably context-free, it suffices to construct a context-free
      language \(L \subseteq C_\phi \pi^{-1}\) that contains every freely
      reduced word in \(C_\phi \pi^{-1}\).
      Proposition~\ref{prop:free-twisted-conj-red-form}, shows us that all such
      words are of the form \(v_1 u v_2\), where \(v_1\) and \(v_2\) are freely
      reduced representative for \(g^{-1}\) and \(g \phi\), for some \(g \in
      F\), and \(u\) is a \(\phi\)-cyclic permutation of a minimal length word
      in \(C_\phi\).

      It therefore suffices to show that the language of all words of the form
      \(v_1 u v_2\) where \(v_1\) and \(v_2\) are (not necessarily freely
      reduced) representatives for \(g^{-1}\) and \(g \phi\), for some \(g \in
      F\), and \(u\) is a \(\phi\)-cyclic permutation of a minimal length word
      in \(C_\phi\), is a context-free language. Lemma~\ref{lem:elt-hash-aut}
      tells us that
      \[
				\{u \# v \mid u, \ v \in (\Sigma \cup \Sigma^{-1} \cup T)^\ast, \
				u^{-1}\phi = v\}
			\]
      is context-free. In addition, if \(E \subseteq F\) denotes the set of
      elements that can be expressed as \(\phi\)-cyclic permutations of a
      minimal length word in \(C_\phi\), then
      Lemma~\ref{lem:virt-inner-cyclic-perm-rcf}, together with the fact that
      finite unions of context-free languages are context-free
      (Lemma~\ref{lem:cf-closure}) shows that \(E \pi^{-1}\) is context-free.
      Since context-free languages are closed under substitutions by other
      context-free languages (Lemma~\ref{lem:cf-subs}), the language
      \[
				L = \{u x v \mid u, \ v \in (\Sigma \cup \Sigma^{-1} \cup T)^\ast, \
        u^{-1}\phi = v, x \in E \pi^{-1}\}
			\]
      is context-free. As \(L \subseteq C_g \pi^{-1}\), and by construction
      contains every freely reduced word in \(C_g \pi^{-1}\),
      Lemma~\ref{lem:free-gp-freely-red-rcf} tells us that \(C_g\) is
      recognisably context-free.
    \end{proof}

    We can now show that conjugacy classes in finitely generated virtually free
    groups are recognisably context-free.

	  \begin{proposition}
			\label{prop:VF-conj-forward}
	    Let \(G\) be a finitely generated virtually free group. Then every conjugacy
	    class of \(G\) is recognisably context-free.
	  \end{proposition}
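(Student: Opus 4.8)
The plan is to reduce conjugacy classes in \(G\) to \(\phi\)-twisted conjugacy classes in a free normal subgroup and then invoke Proposition~\ref{prop:twisted-conj-cf}. Since \(G\) is virtually free, it contains a finite-index normal free subgroup \(F\); fix such an \(F\), a right transversal \(T\) for \(F\) in \(G\), and write the conjugation action as \(c \colon G \to \Aut(F)\), \(g \mapsto (f \mapsto g^{-1} f g)\). Fix \(g_0 \in G\) and let \(C = g_0^G\) be its conjugacy class. Writing \(G = \bigcup_{x \in T} Fx\) and using that conjugation by \(F\) already produces a whole \(F\)-orbit, I would first observe the finite decomposition \(C = \bigcup_{x \in T} x^{-1} (g_0^F) x\). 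As \(T\) is finite and recognisably context-free sets are closed under finite union (an immediate consequence of Lemma~\ref{lem:cf-closure}), it suffices to show that each set \(x^{-1}(g_0^F)x\) is recognisably context-free.

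The key algebraic step is to identify each such set as a twisted conjugacy class of \(F\), translated by a transversal element. Writing \(g_0 = h_0 t_0\) with \(h_0 \in F\) and \(t_0 \in T\), a direct computation gives \(f^{-1} g_0 f = \big(f^{-1} h_0 (f \gamma_{t_0})\big) t_0\), where \(\gamma_{t_0} \in \Aut(F)\) is conjugation by \(t_0\); hence \(g_0^F = D t_0\), where \(D\) is exactly the \(\gamma_{t_0}\)-twisted conjugacy class of \(h_0\). Conjugating by \(x \in T\) and separating the free part from the transversal part, one rewrites \(x^{-1}(g_0^F) x = x^{-1} D x \cdot x^{-1} t_0 x = \big((D \gamma_x) f_x\big) t_x\), where \(x^{-1} t_0 x = f_x t_x\) with \(f_x \in F\), \(t_x \in T\), and \(\gamma_x \in \Aut(F)\) is conjugation by \(x\). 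A routine manipulation of the defining relation then shows that \((D \gamma_x) f_x\) is again a single \(\psi\)-twisted conjugacy class of \(F\), where \(\psi\) is the composite \(\gamma_x^{-1} \gamma_{t_0} \gamma_x\) followed by the inner automorphism given by \(f_x\). Crucially, \(\psi\) lies in the image \(c(G) \le \Aut(F)\).

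It remains to verify that every automorphism in \(c(G)\) is virtually inner, so that Proposition~\ref{prop:twisted-conj-cf} applies. This holds because \(c(F) = \operatorname{Inn}(F)\) and the natural surjection \(G/F \to c(G)/c(F)\) shows \([c(G) : \operatorname{Inn}(F)] \le [G : F] < \infty\); thus every element of \(c(G)\) has a power in \(\operatorname{Inn}(F)\) and is virtually inner. Consequently \((D\gamma_x) f_x\) is a virtually-inner twisted conjugacy class, so by Proposition~\ref{prop:twisted-conj-cf} it is recognisably context-free in \(G\), and by Corollary~\ref{cor:post-multiply-cf} so is \(\big((D\gamma_x)f_x\big) t_x = x^{-1}(g_0^F)x\). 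Taking the finite union over \(x \in T\) yields that \(C\) is recognisably context-free.

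I expect the main obstacle to be the bookkeeping in the middle paragraph: carefully tracking the conjugation-action conventions (functions acting on the right) while verifying both that translating and conjugating a twisted conjugacy class again produces a single twisted conjugacy class, and that the resulting automorphism genuinely lies in \(c(G)\) and is therefore virtually inner. The case analysis is elementary but convention-sensitive, and everything else follows formally from the closure properties and from Proposition~\ref{prop:twisted-conj-cf}.
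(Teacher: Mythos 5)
Your proposal is correct and follows essentially the same route as the paper: both decompose the conjugacy class according to which coset of \(F\) the conjugating element lies in, identify each piece as a (virtually inner) twisted conjugacy class of \(F\) translated by a fixed element, and then combine Proposition~\ref{prop:twisted-conj-cf}, Corollary~\ref{cor:post-multiply-cf} and closure under finite unions. The only differences are cosmetic bookkeeping choices (you absorb the \(F\)-part of the translate into the twisting automorphism and verify virtual innerness via the finite index of \(\operatorname{Inn}(F)\) in the image of the conjugation action, rather than the paper's direct power computation).
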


		\begin{proof}
      Let \(F\) be a finite-index normal free subgroup of \(G\). Fix a finite
      right transversal \(T\) for \(F\) in \(G\). We have that every element of
      \(G\) can be written in the form \(ht\) where \(h \in F\) and \(t \in
      T\). Since \(F\) is finitely generated free, we can write every element
      of \(F\) uniquely as a freely reduced word with respect to a (finite)
      basis \(\Sigma\). We will use \(\Sigma \cup \Sigma^{-1} \cup T\) as our
      (monoid) generating set. Since \(F\) is normal and finite-index, elements
      of \(G\) (in particular elements of \(T\)) act on \(F\) by automorphisms
      of finite order. For each \(t \in T\) we write \(\phi_t\) to denote the
      automorphism of \(F\) defined by \(h \mapsto tht^{-1}\). Note that
      for each \(t \in T\) there exists \(k \in \mathbb{Z}_{> 0}\) such
      that \(t^k \in F\), and so \(\phi_t^k \colon x \mapsto t^{-k} xt^k\),
      and we have shown that \(\phi_t^k\) is an inner automorphism of
      \(F\). Thus all automorphisms \(\phi_t\) are virtually inner.

      Fix a conjugacy class \(C\) of \(G\), and a representative \(h_0 t_0 \in
      C\).  Let \(ht \in G\). We have that \(ht \in C\) if and only if there
      exists \(x \in G\) such that \(x h_0 t_0 x^{-1} = h t\). We can write any
      such \(x = y s\), where \(y \in F\) and \(s \in T\). So
      \begin{equation}
        \label{eqn:virt-free-cong}
				h t = ys ht_0 s^{-1} y^{-1} = y (h_0 \phi_s) (y^{-1} \phi_s^{-1}
			  \phi_{t_0} \phi_s) st_0 s^{-1}.
      \end{equation}
      Thus \(h t \in C\) if and only if there exists \(y \in F\) and \(s \in
      T\) such that \eqref{eqn:virt-free-cong} is satisfied. Moreover, if we
      fix \(s \in T\), then \eqref{eqn:virt-free-cong} becomes a twisted
      conjugacy class of \(F\), using the (fixed) automorphism \(\phi_s\),
      multiplied by a fixed element of \(F\) and then a fixed element of \(T\);
      that is, the normal form for the fixed element \(s t_0 s ^ {-1}\). Since
      finite unions of context-free languages are context-free
      (Lemma~\ref{lem:cf-closure}), it is sufficient to show that the set of
      elements that lie in a set of the form \(C_\phi h s\) is recognisably
      context-free, where \(C_\phi\) is a \(\phi\)-twisted conjugacy class of
      \(F\), \(h \in F\) and \(s \in T\). This follows from
      Corollary~\ref{cor:post-multiply-cf} together with the fact that
      \(C_\phi\) is recognisably context-free
      (Proposition~\ref{prop:twisted-conj-cf}).
    \end{proof}

		Combining Proposition~\ref{prop:VF-conj-forward} with the fact that
		\(\{1\}\) is a conjugacy class that is recognisably context-free in a
		group \(G\) if and only if \(G\) is virtually free (by the Muller-Schupp
		Theorem), we have the following:

		\begin{theorem}
			\label{thm:conj}
			Let \(G\) be a finitely generated group. Then every conjugacy class of
			\(G\) is recognisably context-free if and only if \(G\) is virtually
			free.
		\end{theorem}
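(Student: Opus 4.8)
The plan is to prove the two implications of the biconditional separately, leaning almost entirely on machinery already assembled. For the ``if'' direction --- that $G$ being virtually free forces every conjugacy class to be recognisably context-free --- there is nothing left to do, since this is exactly the content of Proposition~\ref{prop:VF-conj-forward}. So I would simply cite that proposition and pass immediately to the converse.

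For the ``only if'' direction I would argue through a single distinguished conjugacy class. The key observation is that the singleton $\{1\}$ is itself a conjugacy class, namely the class of the identity element. Hence if every conjugacy class of $G$ is recognisably context-free, then in particular $\{1\}$ is recognisably context-free. Fixing a finite monoid generating set $\Sigma$ with natural homomorphism $\pi \colon \Sigma^\ast \to G$, the full preimage $\{1\}\pi^{-1}$ is by definition the word problem of $G$, so the word problem is a context-free language. The Muller--Schupp Theorem (together with Dunwoody's accessibility result, as recalled in the introduction) then gives that $G$ is virtually free, completing this direction.

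The genuinely hard work lies entirely upstream, in the forward direction packaged as Proposition~\ref{prop:VF-conj-forward}: decomposing an arbitrary conjugacy class of a virtually free group, through a finite-index normal free subgroup, into finitely many twisted conjugacy classes of the free part shifted by fixed group elements, and then showing each twisted conjugacy class is recognisably context-free via the reduced-form description of Proposition~\ref{prop:free-twisted-conj-red-form} and the pushdown construction of Lemma~\ref{lem:elt-hash-aut}. Given those results, the theorem itself is a two-line assembly, and I do not anticipate any obstacle at this final stage. The only point requiring care is to state explicitly that $\{1\}$ does qualify as a conjugacy class, so that the hypothesis can be specialised to it and the Muller--Schupp Theorem brought to bear.
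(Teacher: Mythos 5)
Your proposal is correct and matches the paper's argument exactly: the paper also derives the theorem by combining Proposition~\ref{prop:VF-conj-forward} for the ``if'' direction with the observation that \(\{1\}\) is a conjugacy class, so the ``only if'' direction reduces to the Muller--Schupp Theorem. No gaps.
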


    The following example, due to Corentin Bodart, shows that there exist
    non-virtually free groups that admit recognisably deterministic
    context-free conjugacy classes (they are in fact recognisable).

    \begin{ex}
      Let \(H\) be a finitely generated abelian group and let
      \[
        G = \langle H \cup \{t\} \mid \{tht = h^{-1} \mid h \in H\}
        \cup \{t^2 = 1\}\rangle.
      \]
      We look at the conjugacy class of \(t\) in \(G\). Since
      \(th = h^{-1} t\) for all \(h \in H\), we have for all \(h \in H\) that
      \(h^{-1} th = h^{-2}t\). In particular, the conjugacy class of
      \(t\) contains the coset \(H^2 t\). Conversely, \(if h^2 t \in H^2 t\),
      then \(h^2 t = hth^{-1}\), and so \(h^2t\) is conjugate to \(t\). We
      can thus conclude that the conjugacy class of \(t\) is equal to
      the coset \(H^2 t\).

      In addition, \(H\) is index \(2\) in \(G\) and \(H^2\) is finite index in
      \(H\), and so \(H^2\) is finite-index in \(G\). By
      Proposition~\ref{prop:rec-classification}, \(H^2 t\) is
      recognisable in \(G\), and so the conjugacy class of \(t\) is
      recognisable in \(G\).
    \end{ex}

\section{Subgroups and cosets}
	\label{sec:cosets}
  We conclude by giving a classification of when subgroups and cosets with
  quasi-transitive Schreier coset graphs of finitely generated groups are
  recognisably context-free. Ceccherini-Silberstein and Woess
  provided a full classification by showing that a subgroup (and hence coset,
  using Corollary~\ref{cor:post-multiply-cf}) is recognisably context-free if
  and only if the Schreier coset graph is what is called a context-free graph
  \cite{CeccWoess2012}, a term introduced by Muller and Schupp
  \cite{MullerSchupp85} which depends on the ends of a graph. Woess
  continued the study of these graphs in \cite{Woess2012}.

  As mentioned earlier, since the release of this work, the author has been made
  aware of a result of Rodaro, released a few months earlier that proves the
  main result of this section \cite{rodaro}, when taken together with the result
  of Ceccherini-Silberstein and Woess \cite{CeccWoess2012} that classifies when
  a Schreier coset graph is a context-free graph. Rodaro's method uses the
  context-free graphs introduced by Muller and Schupp \cite{MullerSchupp85}. We
  prove this using a recent generalisation of Stallings' Theorem
  \cite{HamannLehnerMiraftabRuhmann}, avoiding the notion of a context-free
  graph.

  We consider the case when a Schreier coset graph is quasi-transitive; that
  is, it has finitely many automorphic orbits, and show that a coset with
  quasi-transitive coset graph is recognisably context-free if and only if the
  corresponding Schreier coset graph is a quasi-tree. If the subgroup in
  question were normal, we could use the Muller-Schupp Theorem to show that the
  Cayley graph of the quotient group must be a quasi-tree.  Since this is
  always isomorphic to the Schreier coset graph, this proves the result. Stated
  in terms of properties of the quotient rather than Schreier coset graphs this
  is a coset \(Hg\) within a group \(G\) is recognisably context-free if and
  only if \(G / H\) is virtually free.

  Extending this to the non-normal case requires more work. In light of
  Corollary~\ref{cor:post-multiply-cf}, it is sufficient to answer the question
  for subgroups. The fact that a subgroup \(H\) being recognisably context-free
  implies that the Schreier coset graph of \(H\) is a quasi-tree is not too
  difficult to show using the same argument as the Muller-Schupp Theorem. The
  converse of this is much more difficult. The main stumbling block arises from
  the fact that the Muller-Schupp proof shows that groups with context-free
  word problem have Cayley graphs quasi-isometric to trees, then uses
  Stallings' Theorem and Dunwoody's accessibility result to show that these
  groups must be virtually free, and then shows that virtually free groups have
  context-free word problem. The difficulty we have here is replacing
  Stallings' Theorem and Dunwoody's accessibility result, as we are working
  with Schreier coset graphs rather than groups.

  A recent result of Hamann, Lehner, Miraftab and R\"{u}hmann does prove a
  version of Stallings' Theorem for connected quasi-transitive graphs
  \cite{HamannLehnerMiraftabRuhmann}. In
  addition, they show that such quasi-trees will be `accessible' in their
  sense. We state these results in Subsection~\ref{subsec:tree-amalg}.

  We begin with the more straightforward direction.

	\begin{proposition}
    \label{prop:Schreier-coset-qt}
    Let \(G\) be a finitely generated group and \(H \leq G\) be such that the
    Schreier coset graph of \(H\) is quasi-transitive. If \(H\) is recognisably
    context-free then the Schreier coset graph of \(H\) in \(G\) is a
    quasi-tree.
	\end{proposition}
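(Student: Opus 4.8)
The plan is to prove that the Schreier coset graph $\Gamma$ of $(G,H)$ is $m$-triangulable for some $m \in \mathbb{Z}_{>0}$ and then to invoke Theorem~\ref{thm:qt-triangulation}. Fix a finite inverse-closed generating set $\Sigma$ with natural map $\pi \colon \Sigma^\ast \to G$, and write $v_0 = H$ for the base vertex. The graph $\Gamma$ is connected (as $\Sigma$ generates $G$) and locally finite (each coset $Hg$ has degree at most $|\Sigma|$), and by hypothesis it has finitely many automorphism orbits, with representatives $Hg_1, \ldots, Hg_r$. The starting observation is that a word $w$ labels a closed walk based at $Hg_i$ precisely when $Hg_i(w\pi) = Hg_i$, that is, when $w\pi \in g_i^{-1}Hg_i$; so the language of closed-walk labels based at $Hg_i$ is exactly $(g_i^{-1}Hg_i)\pi^{-1}$.

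First I would record that each conjugate $g_i^{-1}Hg_i$ is recognisably context-free. Since $H$ is recognisably context-free, Corollary~\ref{cor:post-multiply-cf} shows $Hg_i$ is recognisably context-free, and applying the same corollary to left multiplication by $g_i^{-1}$ gives that $g_i^{-1}(Hg_i) = g_i^{-1}Hg_i$ is recognisably context-free. Hence each closed-walk language $(g_i^{-1}Hg_i)\pi^{-1}$ is context-free.

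The technical core, carried out exactly as in the Muller-Schupp argument, is a triangulation lemma for a single base vertex: if the language of closed-walk labels based at a vertex $v$ of a connected locally finite graph is context-free, then there is a constant $m_v$ such that every circuit based at $v$ is $m_v$-triangulable. I would prove this by fixing a Chomsky normal form grammar (Lemma~\ref{Chomsky_norm_form_lem}) for the language and reading a triangulation off the derivation tree of a closed-walk label: each production $\mathbf{A} \to \mathbf{BC}$ splits the subpath generated by $\mathbf{A}$ at the vertex where $\mathbf{B}$ hands over to $\mathbf{C}$, and the pumping structure of the grammar bounds the ``width'' of the resulting triangles uniformly, independently of the circuit. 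This is the step I expect to be the main obstacle, since it is where context-freeness is converted into a uniform geometric bound; it is, however, precisely the mechanism underlying the forward direction of the Muller-Schupp Theorem, applied here to a Schreier graph with a distinguished base vertex rather than to a Cayley graph.

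Finally I would assemble the pieces. Applying the triangulation lemma to each of the finitely many base vertices $Hg_1, \ldots, Hg_r$ yields constants $m_1, \ldots, m_r$; set $m = \max_i m_i$. Every vertex of $\Gamma$ is the image of some $Hg_i$ under an automorphism, and automorphisms are isometries of $\Gamma$, so they carry $m$-triangulations to $m$-triangulations. Consequently every circuit of $\Gamma$, based at any vertex, is $m$-triangulable, and Theorem~\ref{thm:qt-triangulation} shows that $\Gamma$ is a quasi-tree. The role of quasi-transitivity is exactly to reduce the infinitely many possible base vertices to finitely many representatives: closed walks based at $v_0$ alone control only circuits through $v_0$, and since $d_\Gamma(v_0, Hg)$ is unbounded there is no uniformly bounded way to conjugate an arbitrary circuit back to $v_0$.
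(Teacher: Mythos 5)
Your proposal follows essentially the same route as the paper: run the Muller--Schupp triangulation argument on a Chomsky normal form grammar for the context-free closed-walk language at a basepoint, use quasi-transitivity to reduce to finitely many basepoints, and conclude via Theorem~\ref{thm:qt-triangulation}. The only real difference is cosmetic --- you re-run the grammar argument at each orbit representative $Hg_i$ (using Corollary~\ref{cor:post-multiply-cf} twice to see that the stabiliser language $(g_i^{-1}Hg_i)\pi^{-1}$ is context-free), whereas the paper triangulates only circuits based at $H$ and conjugates an arbitrary circuit back to $H$ by a path of length at most $K$, inflating the constant from $M$ to $M+K$.
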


	\begin{proof}
    Fix a finite monoid generating set \(\Sigma\) for \(G\) and let \(\pi
    \colon \Sigma^\ast \to G\) be the natural homomorphism. As \(H \pi^{-1}\)
    is context-free, Lemma~\ref{Chomsky_norm_form_lem} tells us there is a
    context-free grammar \(\mathcal G = (V, \ \Sigma, \ \mathcal P, \
    \mathbf{S})\) that is in Chomsky normal form and has no useless
    non-terminals, such that the language of \(\mathcal G\) is \(H \pi^{-1}\).
    Let \(\mathbf{A} \in V\), and suppose \(w_1, \ w_2 \in \Sigma^\ast\) are
    such that \(\mathbf{A} \Rightarrow^\ast w_1\) and \(\mathbf{A}
    \Rightarrow^\ast w_2\). Then there exist \(\sigma, \ \tau \in \Sigma^\ast\)
    such that \((\sigma w_1 \tau)\pi, \ (\sigma w_2 \tau) \pi \in H\). So
    \((\sigma w_1 w_2^{-1} \sigma^{-1}) \pi = (\sigma w_1 \tau \tau^{-1}
    w_2^{-1} \sigma^{-1}) \pi \in H\). We have thus shown that \((w_1
    w_2^{-1})\pi\) is conjugate to an element of \(H\). Moreover, a conjugating
    element is \(\sigma \pi\).

    For each \(\mathbf{A} \in V\) choose a word \(w_\mathbf{A} \in
    \Sigma^\ast\) such that \(\mathbf{A} \Rightarrow^\ast w_\mathbf{A}\) (such
    a derivation always exists as \(\mathbf{A}\) is not useless). Let \(M =
    \max\{|w_\mathbf{A}| \mid \mathbf{A} \in V\}\).  Suppose \(u \in H
    \pi^{-1}\). Then \(w\) labels a circuit in the Schreier coset graph
    \(\Gamma\) of \(H\), with basepoint \(H\). Suppose in a derivation of \(u\)
    in \(\mathcal G\) we have \(\mathbf{S} \Rightarrow^\ast \sigma \mathbf{A}
    \tau \Rightarrow^\ast \sigma v \tau \equiv w\). Note that \((v^{-1}
    w_\mathbf{A})\pi \in H^{\sigma \pi}\). In particular, \((v^{-1}
    w_\mathbf{A}) \pi\) lies in the stabiliser of \(H (\sigma \pi)\). So
    replacing \(\sigma v v^{-1} w_\mathbf{A} \tau\) also traces a circuit in
    \(\Gamma\) with basepoint \(H\), and so \(\sigma w_\mathbf{A}
    \tau\) does as well. Since \(|w_\mathbf{A}| \leq M\), this will be an
    \(M\)-reduction. We can therefore apply the Muller-Schupp method to
    \(M\)-triangulate every circuit in \(\Gamma\) with basepoint
    \(H\) by replacing subwords derived from each non-terminal \(\mathbf{A}\)
    with \(w_{\mathbf{A}}\).

    To achieve this, we go through the derivation of a word \(u\) in \(H
    \pi^{-1}\), (ignoring productions of the form \(\mathbf{A} \to a\)), and
    for each production of the form \(\mathbf{A} \to \mathbf{BC}\), we have an
    \(M\)-reduction from the start of the subword of \(u\) derived from
    \(\mathbf{BC}\) to the end (using the label \(w_\mathbf{A}\)).

    To show that every circuit in \(\Gamma\) is triangulable, not just those
    with a basepoint in \(H\), it is sufficient to show that for all
    automorphic orbits of the Schreier coset graph there is a basepoint \(Ht\)
    such that every circuit with a basepoint \(Ht\) is triangulable. Fix a set
    \(T\) of representatives for these automorphic orbits. Since
    \(\Gamma\) is quasi-transitive, we can choose \(T\) to be finite.

    Let \(Ht \in T\) and let \(u \in \Sigma^\ast\) trace a path in \(\Gamma\)
    from \(Ht\) to \(Ht\). Fix a word \(w_t \in \Sigma^\ast\) representing
    \(t\). Then \(w_t u w_t^{-1}\) labels a path in \(\Gamma\) from \(H\)
    to itself, and so this circuit is \(M\)-triangulable. Thus the circuit
    traced by \(u\) with basepoint \(Ht\) is \((M + |w_t|)\)-triangulable.
    Let \(K = \max_{Ht \in T} |w_t|\). We can conclude that the Schreier graph
    of \(H\) in \(G\) is \((M + K)\)-triangulable, and so by
    Theorem~\ref{thm:qt-triangulation}, it is a quasi-tree.
	\end{proof}

  We now prove that if a Schreier coset graph of a subgroup of a finitely
  generated group is a quasi-tree, then then the subgroup is recognisably
  context-free. We first need some definitions, based on definitions in
  \cite{MullerSchupp85}.

	\begin{dfn}
		A \textit{finitely generated graph} is a \(\Sigma\)-labelled graph,
		where \(\Sigma\) is an alphabet, such that
		\begin{enumerate}
			\item \(\Gamma\) is connected;
			\item \(\Gamma\) has uniformly bounded degree (that is, there exists
			\(d > 0\) such that the degree of every vertex is at most \(d\));
			\item \(\Sigma\) is finite.
		\end{enumerate}
    Let \(\Gamma\) be a finitely generated graph, with edges labelled using an
    alphabet \(\Sigma\). Fix a vertex \(v_0\) and a finite set \(F\) of
    vertices of \(\Gamma\). The \textit{language} of \(\Gamma\) with respect to
    the \textit{origin} \(v_0\) and \textit{accepting states} \(F\) is the set
    of all words that trace a path in \(\Gamma\) from \(v_0\) to a vertex in
    \(F\).
	\end{dfn}

  We now show that taking a `nice' tree amalgamation of finitely generated
  graphs that both have context-free languages yields a graph with a context-free
  language.

	\begin{lem}
		\label{lem:tree-amalg-cf}
		Let \(\Gamma_1\) and \(\Gamma_2\) be finitely generated quasi-transitive
		graphs with edges labelled from an alphabet \(\Sigma\), whose languages are
		context-free with respect to any origin and any finite set of accepting
		states. Then every tree amalgamation of \(\Gamma_1\) and \(\Gamma_2\) with
		finite adhesion and finite identification also has a context-free
		language with respect to any origin and any finite set of accepting states.
	\end{lem}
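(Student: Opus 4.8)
The plan is to build a pushdown automaton for the language of the tree amalgamation \(\Gamma = \Gamma_1 \ast \Gamma_2\) by exploiting the fact that its global shape is governed by the connecting tree \(T\): every path decomposes into \emph{segments} that stay inside a single copy \(\Gamma_t\), glued together along the adhesion sets through which the path crosses between copies. Because \(T\) is a tree, the sequence of copies visited by a path, read as a walk in \(T\), has a well-defined geodesic ``spine'', and every excursion the path makes into a child copy must return through the (finite, by finite adhesion) interface before the path can proceed in the parent. This is exactly the recursive, nested structure that a stack can record, and it is the reason \cref{lem:pda-empty-stack} was isolated: we will arrange each sub-computation to finish with empty stack, so that the single stack of one pushdown automaton can manage the entire nesting.

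First I would fix the origin \(v_0\), lying in some copy of type \(i_0\), and the finite accepting set \(F\). For each type \(i \in \{1,2\}\) and each ordered pair \((a,b)\) of vertices of \(\Gamma_i\) drawn from the adhesion vertices together with the relevant origin and accept vertices, the single-copy language \(L_i(a,b)\) of words tracing a path from \(a\) to \(b\) inside one copy of \(\Gamma_i\) is context-free by hypothesis, and by \cref{lem:pda-empty-stack} is accepted by a pushdown automaton \(\mathcal A_i^{a,b}\) whose stack is empty whenever it accepts. Finite adhesion makes the set of such pairs finite for each type, and quasi-transitivity together with finite identification ensures there are only finitely many distinct interface configurations to consider, so that we ever need only finitely many component automata and finitely many stack symbols.

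The second step is to assemble these into one automaton. It simulates a path by running, in the current copy, the appropriate \(\mathcal A_i^{a,b}\); on reaching an adhesion vertex \(s \in S_k^i\) it may choose to cross into the child copy attached there, identifying \(s\) with \((s)\phi_{kl}\) in the child. To do this cleanly it pushes a bounded ``frame'' recording the type of the child, the adhesion set used, and the data needed to resume the parent, and then begins the child's segment automaton on top of the frozen parent stack. Since, by \cref{lem:pda-empty-stack}, each completed segment and each completed round-trip excursion ends with empty stack, when a child's computation returns to an interface vertex identified with the parent the extra stack it created has been cleared; the automaton pops the frame and resumes the parent exactly where it paused. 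Acceptance occurs when the simulation stands at a vertex of \(F\); as \(F\) is finite and each \(f \in F\) lies in a copy at a fixed finite \(T\)-distance from \(v_0\), the paths to \(F\) are captured by finitely many configurations, so closure of context-free languages under finite union and concatenation (\cref{lem:cf-closure}) completes the argument. Equivalently, one can phrase the whole construction as a single finite context-free grammar with a non-terminal \(N^i_{a,b}\) for each interface pair, importing the grammar of each \(L_i(a,b)\) and adding excursion productions \(N^i_{a,b} \to N^i_{a,s}\,N^{j}_{(s)\phi_{kl},(s')\phi_{kl}}\,N^i_{s',b}\); closure under substitution (\cref{lem:cf-subs}) is the property underlying this viewpoint.

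The main obstacle is precisely the bookkeeping that makes the single-stack nesting correct: guaranteeing that control returns from an excursion to the right parent vertex with the parent's stack intact, and that the automaton cannot leave a copy other than through a genuine interface vertex or accept in the middle of an excursion. This is where \cref{lem:pda-empty-stack} is indispensable, since without empty stacks at the segment boundaries one cannot serialise a ``stack of stacks'' into a single stack. A secondary, more routine difficulty is verifying that the finiteness hypotheses, finite adhesion and finite identification, together with quasi-transitivity, genuinely bound the number of interface pairs, component automata, and frames, so that the assembled object is an honest pushdown automaton.
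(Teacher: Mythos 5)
Your proposal is correct and is essentially the paper's own construction: both decompose a path in \(\Gamma_1 \ast \Gamma_2\) into single-copy segments between adhesion vertices, accept each segment with a component pushdown automaton normalised via \cref{lem:pda-empty-stack} to finish with empty stack, and serialise the resulting nested excursion structure onto one stack that records the current position in the connecting tree (your ``frames'' are exactly the paper's tree-address word over \(I_1 \sqcup I_2\), which the paper checks against \(w_{t_q}\) at acceptance where you appeal to finitely many final configurations). The grammar/substitution phrasing you offer at the end is only a presentational variant of the same idea.
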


	\begin{proof}
    Let \((S_k^i)_{k \in I_i}\) be the adhesion sets of the tree amalgamation
    in \(\Gamma_i\), for each \(i \in \{1, 2\}\), and assume \(I_1\) and
    \(I_2\) are disjoint. Let \(T\) be the connecting tree. Note that as finite
    unions of context-free languages are context-free
    (Lemma~\ref{lem:cf-closure}), it suffices to show that the language of
    \(\Gamma_1 \ast \Gamma_2\) is context-free with respect to any origin and
    any singleton set of accepting states.

		Fix an origin vertex \(u_0\) in \(\Gamma_1 \ast \Gamma_2\). This lies in a
		copy of \(\Gamma_1\) or \(\Gamma_2\); without loss of generality assume it
		lies in a copy of \(\Gamma_1\), corresponding to a pair of vertices \((v_0,
		t_0) \in V(\Gamma_1) \times V(T)\). Since \(\Gamma_1\) has a context-free
		language with respect to any origin and any finite set of accepting states,
		for each \(u \in \bigcup_{k \in I_1} S_k^1\), there is a pushdown automaton
		that accepts the language of \(\Gamma_1\) with respect to the origin \(v_0\)
		and the accepting state \(u\).

		We can then take the finite union of these pushdown automata across all \(u
		\in \bigcup_{k \in I_1} S_k^1\), to obtain a pushdown automaton \(\mathcal
		A_0\) that accepts the language of all words that trace a path in
		\(\Gamma_1\) from \(v_0\) to a vertex in \(\bigcup_{k \in I_1} S_k^1\).
		Moreover, as we constructed this as a (disjoint) finite union of pushdown
		automata, we can assume that the set of accept states is partitioned into
		the vertices lying in \(\bigcup_{k \in I_1} S_k^1\).

		Now let \(i \in \{1, 2\}\). Since the language of \(\Gamma_i\) is
		context-free with respect to any origin and any set of accepting states, for
		each \(u, v \in \bigcup_{k \in I_i} S_k^i\), we can construct a pushdown
		automaton \(\mathcal B_{i, u, v}\) accepting the language of all words that
		trace a path in \(\Gamma_i\) from \(u\) to \(v\).

    Fix an accepting state \(q \in V(\Gamma_1 \ast \Gamma_2)\).  Recall that
    the directed edge `version' \(D(T)\) of \(T\) admits an edge-labelling
    using \(I_1 \sqcup I_2\). Since \(D(T)\) is the directed `version' of a
    tree, there is a unique minimal path in \(D(T)\) from \(t_0\) to each
    vertex \(t \in V(T)\). This path traces a word \(w_t \in (I_1 \sqcup
    I_2)^\ast\), and thus we can uniquely describe each vertex in \(T\) using a
    word over \(I_1 \sqcup I_2\). Let \(t_q \in V(T)\) and \(v_q \in \Gamma_i\)
    be a (not necessarily unique) pair, such that \(q\) is the image of \(v_q\)
    under the canonical map from the copy of \(\Gamma_1\) or \(\Gamma_2\)
    corresponding to \(t_q\) to \(\Gamma_1 \ast \Gamma_2\). Fix \(j_q \in \{1,
    2\}\) such that \(\Gamma_{j_q}\) is the corresponding graph.

		Similar to the pushdown automata \(\mathcal B_{i, u, v}\) for each vertex
		\(u \in \bigcup_{k \in I_{j_q}} S_k^i\), we construct a pushdown automaton
		\(\mathcal C_{u}\) to be the pushdown automata that accept the language of
		all words that trace a path in \(\Gamma_{j_q}\) from \(u\) to \(v_q\).

		We can assume that all of the pushdown automata we have defined have
		pairwise disjoint sets of states and stack alphabets. We also assume the
		stack alphabets are all pairwise disjoint from \(I_1\) and \(I_2\). By
		Lemma~\ref{lem:pda-empty-stack}, we can assume that whenever a word is
		accepted by any of these pushdown automata, the stack is empty; that is, the
		only symbol on the stack is the bottom of stack symbol.

		We now use the pushdown automata \(\mathcal B_{i, u, v}\), \(\mathcal
		C_{x}\) and \(\mathcal A_0\) to construct a (non-deterministic) pushdown
		automaton \(\mathcal D\) accepting the language of \(\Gamma_1 \ast
		\Gamma_2\) as follows:
		\begin{enumerate}
			\item Our set of states will be
			\[
				Q(\mathcal A_0) \sqcup\bigsqcup_{i \in \{1, 2\}} \bigsqcup_{u, v \in
				\bigcup_{k \in I_i} S_k^i}
				Q(\mathcal B_{i, u, v}) \sqcup \bigsqcup_{u \in \bigcup_{k \in I_{j_q}} S_k^{j_q}}
				Q(\mathcal C_{u}) \sqcup \{p\}
			\]
			where \(p\) is a new state.
			\item Our alphabet will be \(\Sigma\).
			\item Our start state will be the start state \(q_0\) of \(\mathcal A_0\).
			\item Our accept state will be \(p\).
			\item Our stack alphabet will be
			\[
				\chi(\mathcal A_0) \sqcup \bigsqcup_{i \in \{1, 2\}} \bigsqcup_{u, v \in
				\bigcup_{k \in I_i} S_k^i}
				\chi(\mathcal B_{i, u, v})  \sqcup \bigsqcup_{u \in \bigcup_{k \in I_{j_q}} S_k^{j_q}}
				\chi(\mathcal C_{u}) \sqcup  I_1 \sqcup I_2.
			\]
			\item Our bottom of stack symbol will be the bottom of stack symbol
			\(\perp_0\) of \(\mathcal A_0\).
			\item Our transitions will be all of those of the following forms:
			\begin{enumerate}
				\item All transitions entirely within \(\mathcal A_0\) or some
				\(\mathcal B_{i, u, v}\) or \(\mathcal C_u\);
				\item For each \(i \in \{1, 2\}\), each \(u, v \in \bigcup_{k \in I_i}
				S_k^i\), and each bonding map \(\phi\) such that \(v \in \dom \phi\),
				there is an \(\varepsilon\)-transition from each accept state of
				\(\mathcal B_{i, u, v}\) to the start state of \(\mathcal B_{j, v \phi,
				x}\), where \(j \in \{1, 2\} \setminus \{i\}\) and for every \(x \in
				\bigcup_{k \in I_j} S_k^j\). Since all of the automata \(\mathcal A_0\)
				and \(\mathcal B_{i, u, v}\) have empty stacks when arriving in an
				accept state, when making this transition, the stack will have the form
				\(\perp_0 w \perp_{i, u, v}\), where \(w \in (I_1 \sqcup I_2)^\ast\) and
				\(\perp_{i, u, v}\) is the start symbol of \(\mathcal B_{i, u, v}\). We
				pop \(\perp_{i, u, v}\) from the stack, along with the topmost symbol
				\(k\) in \(w\). If \(\dom \phi \neq S_k^i\), we push \(k\) back onto the
				stack, followed by the (unique) \(l \in I_i\) such that \(\im \phi =
				S_l^j\), and then the bottom of stack symbol for \(\mathcal B_{j, v\phi,
				x}\). If \(\dom \phi = S_k^i\), then we don't push \(k\) back onto the
				stack; we only push the bottom of stack symbol for \(\mathcal B_{j,
				v\phi, x}\).
				\item There are transitions analogous to those in (b), except starting
				in \(\mathcal A_0\) and ending in some \(\mathcal B_{i, u, v}\). To
				avoid any ambiguity, we formally state these as well. As mentioned
				earlier, the accept states of \(\mathcal A_0\) are partitioned into
				parts corresponding to the vertices \(u \in \bigcup_{k \in I_1} S_k^1\).
				Fix such a vertex \(u\). For each bonding map \(\phi\) such that \(u \in
				\dom \phi\), there is an \(\varepsilon\)-transition from each accept
				state of \(\mathcal A_0\) that lies in the part of the partition
				corresponding to \(u\) to the start state of \(\mathcal B_{2, v \phi,
				x}\) for all \(x \in \bigcup_{l \in I_2} S_k^l\). In such a case the
				stack will be of the form \(\perp_0\), and we pop \(\perp_0\) from the stack
				then push \(\perp_0 l\), where \(l \in I_2\) is unique such that
				\(\im \phi = S_l^2\).
				\item There are transitions analogous to those in (b), except starting
				in some \(\mathcal B_{i, u, v}\) and ending in some \(\mathcal C_x\).
				Naturally, these only start in automata \(\mathcal B_{i, u, v}\) where
				\(i \neq j_q\), as \(\mathcal C_x\) corresponds to \(\Gamma_{j_q}\).
				\item If \(j_q \neq 1\), then there are transitions analogous to
				those in (c), starting in \(\mathcal A_0\) and ending in some
				\(\mathcal C_u\).
				\item From each accept state of each \(\mathcal C_u\), there is an
				\(\varepsilon\)-transition to \(p\), that pops \(\perp_0 w_{t_q} \$_u\),
				where \(\$_u\) is the bottom of stack symbol of \(\mathcal C_u\). We
				then push \(\perp_0\) back onto the stack.
			\end{enumerate}
		\end{enumerate}
		The automaton \(\mathcal D\) works as follows. The automata \(\mathcal
		A_0\), \(\mathcal B_{i, u, v}\) and \(\mathcal C_x\) simulate the copies of
		\(\Gamma_1\) and \(\Gamma_2\) used to define \(\Gamma_1 \ast \Gamma_2\). We
		use the stack (behind the bottom of stack symbol of whichever automaton we
		are currently in) to track the position within the connecting tree \(T\)
		that we are in, with \(t_0\) being used as a root. The transitions between
		each of the automata \(\mathcal A_0\), \(\mathcal B_{i, u, v}\) and
		\(\mathcal C_x\) simulate the bonding maps, as they identify vertices in
		\(\Gamma_1\) and \(\Gamma_2\). We need \(\mathcal A_0\) to be a separate
		automaton to deal with the multiple accepting vertices we can start with
		(after that, we just pass to a different automaton \(\mathcal B_{i, u, v})\)
		for different accepting states \(v\)). The automaton \(\mathcal C_x\) is
		separate to make transitioning to the accept state \(p\) more
		straightforward. Transitioning from \(\mathcal C_x\) to \(p\) requires us to
		be in the vertex \(v_q\) corresponding to \(q\), and the transition confirms
		that our stack reads \(t_q\); that is, we are in the correct position within
		\(T\), before accepting.
	\end{proof}
  Using Theorem~\ref{thm:qt-fact}, we can build any Schreier coset graph by
  iteratively taking tree amalgamations, starting with a collection of finite
  graphs. Lemma~\ref{lem:tree-amalg-cf} tells us that each of these tree
  amalgamations preserves the property of having a context-free language.  We
  can thus use this to show that a subgroup whose Schreier coset graph is a
  quasi-transitive quasi-tree will be recognisably context-free. We now
  formally state the characterisation of recognisably context-free cosets of
  subgroups with quasi-transitive Schreier coset graphs we have been working
  towards.

	\begin{theorem}
		\label{thm:cosets}
		Let \(G\) be a finitely generated group, \(H \leq G\) and \(g \in G\) be
    such that the Schreier coset graph of \((G, H)\) is quasi-transitive.
		Then \(Hg\) is recognisably context-free if and only if the Schreier coset
		graph of \((G, H)\) is a quasi-tree.
	\end{theorem}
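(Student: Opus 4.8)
The plan is to reduce to subgroups, quote the direction already established, and prove the converse by an induction over a terminal factorisation. First I would note that applying Corollary~\ref{cor:post-multiply-cf} with $g$ and then with $g^{-1}$ shows that $Hg$ is recognisably context-free if and only if $H = (Hg)g^{-1}$ is recognisably context-free, so it suffices to prove the statement for the subgroup $H$. One implication is then exactly Proposition~\ref{prop:Schreier-coset-qt}: if $H$ is recognisably context-free and the Schreier coset graph $\Gamma$ of $(G,H)$ is quasi-transitive, then $\Gamma$ is a quasi-tree. The remaining work is the converse, namely that a quasi-transitive Schreier coset graph which is a quasi-tree forces $H$ to be recognisably context-free.

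For the converse, fix a finite generating set $\Sigma$ and the natural map $\pi \colon \Sigma^\ast \to G$. The crucial identification is that the language of $\Gamma$ with respect to the origin $H$ and the single accepting vertex $H$ is precisely $H\pi^{-1}$: a word $w$ traces a closed path at the vertex $H$ if and only if $Hw\pi = H$, that is, $w\pi \in H$. Since $G$ is finitely generated, $\Gamma$ is connected and locally finite of uniformly bounded degree over the finite label alphabet $\Sigma$, so it is a finitely generated graph in the sense used in Lemma~\ref{lem:tree-amalg-cf}. Combining the hypothesis that $\Gamma$ is a quasi-tree with Theorem~\ref{thm:qt-fact}, I obtain a terminal factorisation of $\Gamma$ into finitely many finite graphs; that is, $\Gamma$ is built by a finite sequence of tree amalgamations with finite adhesion and finite identification, starting from finite graphs.

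I would then induct on the number of tree amalgamations in this factorisation, proving at each stage that the graph in question has a context-free language with respect to \emph{any} origin and \emph{any} finite set of accepting states. The base case is a finite graph, which is literally a finite-state automaton, so every such language is regular and hence context-free. For the inductive step, each graph is a tree amalgamation $\Gamma_1 \ast \Gamma_2$ of two graphs to which the inductive hypothesis applies, and Lemma~\ref{lem:tree-amalg-cf} transfers the property to $\Gamma_1 \ast \Gamma_2$. Carrying this to the top of the factorisation gives that $\Gamma$ has a context-free language for any origin and finite accepting set; specialising to origin $H$ and accepting vertex $H$ shows $H\pi^{-1}$ is context-free, so $H$ is recognisably context-free, completing the proof via Theorem~\ref{thm:conj}-style bookkeeping on both directions.

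The hard part will be verifying that the hypotheses of Lemma~\ref{lem:tree-amalg-cf}, that both factor graphs are finitely generated and quasi-transitive $\Sigma$-labelled graphs, genuinely persist at every stage of the induction rather than only for the original finite graphs. Finite generation is the easier half: finite adhesion and finite identification keep degrees uniformly bounded and keep the amalgam connected, so the finitely-generated-graph property is stable. The delicate points are, first, that the terminal factorisation supplied by Theorem~\ref{thm:thin-ends-fact} (underlying Theorem~\ref{thm:qt-fact}) consists throughout of connected quasi-transitive locally finite graphs, which I would invoke to keep each intermediate amalgam quasi-transitive; and second, that the factorisation of the \emph{underlying simple} graph must be promoted to the $\Sigma$-labelled oriented setting so that the pieces are label-preserving copies of subgraphs of $\Gamma$ and the bonding maps respect the induced labelling. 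Once this label-aware version of the factorisation is set up, the induction runs with no further changes.
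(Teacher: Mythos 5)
Your proposal is correct and follows essentially the same route as the paper: reduce to the subgroup $H$ via Corollary~\ref{cor:post-multiply-cf}, cite Proposition~\ref{prop:Schreier-coset-qt} for the forward direction, and for the converse combine Theorem~\ref{thm:qt-fact} with an induction over the terminal factorisation using Lemma~\ref{lem:tree-amalg-cf}, with finite graphs having regular (hence context-free) languages as the base case. Your closing caveats about preserving quasi-transitivity and promoting the factorisation to the labelled setting are points the paper itself passes over silently, so they do not mark a divergence in approach.
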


	\begin{proof}
		In light of Corollary~\ref{cor:post-multiply-cf}, it suffices to show
		that \(H\) is recognisably context-free if and only if the Schreier
		coset graph \(\Gamma\) of \((G, H)\) is a quasi-tree. The fact that
		\(H\) is recognisably context-free implies that \(\Gamma\) is a quasi-tree
		is Proposition~\ref{prop:Schreier-coset-qt}. So it remains to show that
		if \(\Gamma\) is a quasi-tree tree, then \(H\) is recognisably context-free.

    Suppose \(\Gamma\) is a quasi-tree. Since \(\Gamma\) is quasi-transitive,
    we can apply Theorem~\ref{thm:qt-fact} to show that \(\Gamma\) can be built
    from a (finite) collection of finite graphs by successive tree amalgamations
    with finite adhesion and finite identification. Each of the finite graphs
    will have a context-free language with respect to any origin and any set of
    accepting states, as the languages of finite graphs are always regular. We
    can then apply Lemma~\ref{lem:tree-amalg-cf} to show that each of the
    successive tree amalgamation preserves the properties of having a
    context-free language with respect to any origin and any set of accepting
    states. Thus \(\Gamma\) has a context-free language with respect to any
    origin and any set of accepting states. In particular, the language of all
    words that trace a path in \(\Gamma\) from \(H\) to \(H\) is context-free.
    Since this is precisely the set of words in \(H \pi^{-1}\), \(H\) is
    recognisably context-free.
	\end{proof}

	\begin{rmk}
		Recall that Herbst and Thomas proved that a subset \(E\) of a group \(G\) is
		recognisable if and only if \(E\) is a finite union of cosets of some
		finite-index subgroup of \(G\) (Proposition~\ref{prop:rec-classification}).
		In light of Theorem~\ref{thm:cosets} (or alternatively the classification of
		when generic subgroups are recognisably context-free due to
		Ceccherini-Silberstein and Woess \cite{CeccWoess2012}) it is natural to ask
		whether an analogous statement may be true for recognisably context-free
		subsets, using subgroups with quasi-tree Schreier coset graphs in place of
		finite-index subgroups (and not necessarily use a fixed subgroup). If we
		consider \(\mathbb{Z}\), then it is easy to see that such a statement cannot
		be true. As the coword problem of \(\mathbb{Z}\) is context-free (by the
		Muller-Schupp Theorem), \(\mathbb{Z}
		\setminus \{0\}\) is a recognisably context-free subset of \(\mathbb{Z}\).
		The only subgroups of \(\mathbb{Z}\) with Schreier coset graphs that are
		quasi-trees are infinite ones. Their cosets are of the form \(\{a x + b\mid
		x \in \mathbb{Z}\}\) with \(a \in \mathbb{Z} \setminus\{0\}\) and \(b \in
		\mathbb{Z}\). It is not difficult to see that any finite union of these sets
		that does not contain zero, must necessarily miss infinitely many elements.
	\end{rmk}

\section*{Acknowledgements}
  I would like to thank Corentin Bodart, André Carvalho, Gemma Crowe, Luke
  Elliott, Matthias Hamann, Mark Kambites, Alan Logan,
  Carl-Fredrik Nyberg Brodda, Davide Perego and N\'{o}ra Szak\'{a}cs
  and Martin van Beek for answering questions, mathematical discussions,
  directing me to references and pointing out errors in previous versions, all
  of which greatly helped with this work. During this work, I was supported by
  the Heilbronn Institute for Mathematical Research.

\bibliography{references}
\bibliographystyle{abbrv}
\end{document}